\numberwithin{equation}{section}
\numberwithin{figure}{section}
\theoremstyle{plain}
\newtheorem{thm}{\protect\theoremname}[section]
\theoremstyle{remark}
\newtheorem{rem}[thm]{\protect\remarkname}
\theoremstyle{plain}
\newtheorem{prop}[thm]{\protect\propositionname}
\theoremstyle{plain}
\newtheorem{lem}[thm]{\protect\lemmaname}
\newenvironment{proof}[1][\protect\proofname]{\par
	\normalfont\topsep6\p@\@plus6\p@\relax
	\trivlist
	\itemindent\parindent
	\item[\hskip\labelsep\scshape #1]\ignorespaces
}{%
	\endtrivlist\@endpefalse
}
\providecommand{\proofname}{Proof}
\providecommand{\lemmaname}{Lemma}
\providecommand{\propositionname}{Proposition}
\providecommand{\remarkname}{Remark}
\providecommand{\theoremname}{Theorem}
\begin{document}
\title{On blow up for a class of radial Hartree type equations}
\author{Shumao Wang }
\address{Academy of Mathematics and Systems Science, the Chinese Academy of Sciences, Beijing 100190, China.}
\email{wangshumao@amss.ac.cn}
\begin{abstract}
We study a class of Hartree type equations and prove a quantitative
blow up rate for their blow up solutions. This is an analogue of the
result by Merle and Rapha\"el on 3d NLS. 
\end{abstract}

\keywords{Hartree type equations; blow up rate.}

\maketitle

\section{INTRODUCTION}

We consider the Cauchy problem for the focusing Hartree equation for
$d=3$:
\begin{equation}
\begin{cases}
i\partial_{t}u+\triangle u=-(V*|u|^{2})u, & (t,x)\in\mathbb{R}\times\mathbb{R}^{3},\\
u\mid_{t=0}=u_{0}\in\dot{H}^{\frac{1}{2}}\cap\dot{H}^{1}.
\end{cases}\label{eq:Hartree}
\end{equation}

Here $V(x)$ is a real valued function, problem (\ref{eq:Hartree})
has three conservation laws:
\begin{itemize}
\item Mass:
\begin{equation}
M(u(t))=M(u_{0}):=\int_{R^{3}}|u(t)|^{2}dx,\label{eq:mass}
\end{equation}
\item Energy:
\begin{equation}
E(u(t))=E(u_{0}):=\frac{1}{2}\int|\nabla u(t,x)|^{2}dx-\frac{1}{4}\int(V*|u|^{2})(t,x)|u(t,x)|^{2}dx,\label{eq:energy-1}
\end{equation}
\item Momentum:
\begin{equation}
P(u(t))=P(u_{0}):=Im\int\bar{u}\nabla u(t)dx.\label{eq:momentum}
\end{equation}
\end{itemize}
We will focus on radial $V$ and radial initial data $u_{0}$ throughout
the article. 

\subsection{Setting of the problem and statement of the main result}

For the equation (\ref{eq:Hartree}), it describes the dynamics of
the mean-field limits of many-body quantum systems, such as coherent
states, condensates. In particular, it provides effective model for
quantum systems with long- range interactions. Readers can refer to
\cite{phy-1,phy-2,key-14} for more information about the physical
background of the equation. Besides its physical importance, a lot
of mathematical interest for equation (\ref{eq:Hartree}) lies in
its connection and similarity to cubic NLS,
\begin{equation}
\begin{cases}
i\partial_{t}u+\triangle u=-|u|^{2}u. & (t,x)\in\mathbb{R}\times\mathbb{R}^{3},\\
u\mid_{t=0}=u_{0}.
\end{cases}\label{eq:Schrodinger}
\end{equation}

In particular, when $V(x)=\delta(x)$, then (\ref{eq:Hartree}) formally
becomes (\ref{eq:Schrodinger}). From this perspective, if one obtains
some result for (\ref{eq:Schrodinger}), it is reasonable to expect
that a similar result holds for (\ref{eq:Hartree}). It is also well-expected
that one can generalize a result to a cubic NLS from a Hartree type
model. And this is the main purpose of the current article. However,
the Hartree equation differs from cubic NLS mainly in two ways,
\begin{itemize}
\item Equation (\ref{eq:Hartree}) does not necessarily enjoy the scaling
symmetry\footnote{If $u(t,x)$ solves (\ref{eq:Schrodinger}), so does $u_{\lambda}(t,x):=\frac{1}{\lambda}u(\frac{t}{\lambda^{2}},\frac{x}{\lambda})$.},
which is one of the most important property for (\ref{eq:Schrodinger}).
\item Due to the convolution structure in (\ref{eq:Hartree}), it is non-local.
\end{itemize}
The study of blow up problem for focusing NLS (and other nonlinear
dispersive PDEs) has been an active research field. Classical virial
argument by Glassey in \cite{Glassey} implies the existence of many
blow up solutions. Constructive blow up solutions and universality
of blow up solutions under certain regime have attracted a lot of
researchers, see for examples \cite{key-8,key-9,key-10,key-15,key-21}.
But very few can be said about general blow up solutions, i.e. solutions
without a size constraint, and it is very hard. In \cite{Main reference of FM and PR},
Merle and Rapha\"el consider the nonlinear Schr\"odinger equation
(\ref{eq:Schrodinger}), and proved that,
\begin{thm}
\label{thm PR and FM}Let $u_{0}\in\dot{H}^{\frac{1}{2}}\cap\dot{H}^{1}$
be radial. Assume that the corresponding solution to (\ref{eq:Schrodinger})
blows up in finite time $T$, then there exists a constant $\gamma>0$
such that \footnote{They obtain (\ref{eq:rate}) for $\gamma=\frac{1}{12}$.}
\begin{equation}
||u(t)||_{L^{3}}\geq C(u_{0})|\log(T-t)|^{\gamma}\label{eq:rate}
\end{equation}
\end{thm}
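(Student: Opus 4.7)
The plan is to argue by contradiction via a nonlinear profile decomposition combined with a quantitative bubble-lifespan analysis. Suppose the conclusion fails: then for every $\gamma>0$ one finds a sequence $t_{n}\uparrow T$ along which $\|u(t_{n})\|_{L^{3}}$ grows slower than $|\log(T-t_{n})|^{\gamma}$. Since the equation is $\dot{H}^{1/2}$-critical and $L^{3}$ and $\dot{H}^{1/2}$ share that scaling, while $\|\nabla u(t)\|_{L^{2}}\to\infty$ as $t\to T$ by the classical Glassey/Merle lower bound for energy-subcritical blow-up, I would rescale by $\lambda_{n}:=\|\nabla u(t_{n})\|_{L^{2}}^{-1}$ and study $v_{n}(x):=\lambda_{n}\,u(t_{n},\lambda_{n}x)$, so that $\|\nabla v_{n}\|_{L^{2}}=1$ while $\|v_{n}\|_{L^{3}}=\|u(t_{n})\|_{L^{3}}$ remains mildly growing.

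Next I would invoke Keraani's $\dot{H}^{1/2}$ profile decomposition, restricted to radial functions so that the spatial translation parameters are forced to be trivial, and write $v_{n}=\sum_{j=1}^{J}\phi_{n}^{j}+r_{n}^{J}$ with scales $\mu_{n}^{j}$ and a remainder whose free evolution is small in an $L^{5}_{t,x}$-type Strichartz norm. Asymptotic $L^{3}$-orthogonality forces $\sum_{j}\|\phi^{j}\|_{L^{3}}^{3}\lesssim\|u(t_{n})\|_{L^{3}}^{3}$, so only $\lesssim\|u(t_{n})\|_{L^{3}}^{c}$ profiles can carry a fixed fraction of the critical norm. For each profile I would solve the cubic NLS to obtain a nonlinear profile $U^{j}$; combined with a Bourgain/Kenig--Merle long-time perturbation lemma in $\dot{H}^{1/2}$, the actual solution $u$ is well approximated near $t_{n}$ by the superposition of rescaled $U^{j}$'s plus the free evolution of $r_{n}^{J}$.

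The heart of the argument is then a bubble-lifespan count. Each nonlinear profile $U^{j}$ either scatters, in which case it only contributes a bounded additive amount to the critical norm and cannot drive a blow-up, or it blows up in its own time $T^{j}<\infty$ after rescaling, which translates to a lifespan $\sim(\mu_{n}^{j})^{2}T^{j}$ in the original coordinates. Since the solution actually blows up at $T$, one of the non-scattering profiles must be responsible for the singularity on the time interval $[t_{n},T]$; after that bubble is "used up" at some time $t_{n+1}$, the remaining solution has again slightly larger $L^{3}$ norm and strictly smaller lifespan, and the argument can be iterated. Book-keeping how many iterations fit into $T-t_{n}$, using that each step consumes at most a unit of the critical $L^{3}$ mass but compresses the time window by a controlled factor, yields the lower bound $\|u(t_{n})\|_{L^{3}}\gtrsim|\log(T-t_{n})|^{\gamma}$ for some explicit $\gamma>0$.

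The main obstacle will be making the soft compactness into a quantitative logarithmic rate: this requires scale-invariant, uniform perturbation estimates for the cubic NLS with a large number of interacting bubbles, together with a careful accounting that each bubble-extraction step costs only a bounded multiplicative factor in the $L^{3}$ bound and at most a bounded multiplicative factor in the remaining time. Radial symmetry is used essentially to rule out spatial separation of bubbles, and to exploit a radial Sobolev embedding that controls the tails of $v_{n}$ away from the origin. The exponent $\gamma$ will emerge as the ratio of these two constants and is not expected to be sharp.
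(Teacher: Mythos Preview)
Your approach is genuinely different from the paper's (i.e., from Merle--Rapha\"el's), and as written it has a real gap rather than just a missing detail.

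The paper's route uses no profile decomposition and no perturbation theory. After rescaling $v(\tau,x)=\lambda_u(t)\,\bar u(t-\lambda_u(t)^2\tau,\lambda_u(t)x)$ with $\lambda_u(t)=\|\nabla u(t)\|_{L^2}^{-2}$, the argument is driven entirely by a \emph{localized virial identity}. A bootstrap on the truncated virial yields (i) a uniform bound on the scale-invariant Morrey--Campanato quantity $\rho(v(\tau),R)=\sup_{R'\ge R}\frac{1}{R'}\int_{R'\le|x|\le 2R'}|v|^2$, and (ii) a time-averaged dispersive bound $\int_0^{\tau_0}(\tau_0-\tau)\|\nabla v(\tau)\|_{L^2}^2\,d\tau\lesssim M_0^{\alpha}\tau_0^{3/2}$ with $M_0\sim\|v(0)\|_{L^3}$. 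From (ii) one selects times $\tau_i\in[0,e^i]$, $i\in[\sqrt N,N]$, at which $\|\nabla v(\tau_i)\|_{L^2}$ is controlled; the non-positive (rescaled) energy then pins a fixed amount of $L^2$ mass on a ball of radius $\sim M_0^{\beta}\lambda_v(\tau_i)$, and a backward $L^2$-flux estimate (again from the virial) transports this to $\tau=0$. One thus obtains $\sim N/\log M_0$ \emph{disjoint} annuli on each of which $\frac{1}{R}\int_{|x|\sim R}|v(0)|^2\ge c$, and summing gives $\|v(0)\|_{L^3}^2\gtrsim N/\log M_0$, i.e.\ $M_0\gtrsim N^{\gamma}$. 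The logarithm appears directly; no compactness is used.

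The gap in your scheme is precisely the step you label ``the main obstacle'', and it is structural, not technical. The long-time perturbation lemma approximates $u$ by the sum of nonlinear profiles only on an interval where the approximate solution has controlled critical Strichartz norm. If some $U^{j}$ blows up in finite time, the approximation ceases to be valid \emph{before} the rescaled blow-up time of $U^{j}$; there is no well-defined ``remaining solution after the bubble is used up'' on which to iterate. If no $U^{j}$ blows up on the relevant window, you get no quantitative link between $T-t_n$ and $\|u(t_n)\|_{L^3}$: for the $\dot H^{1/2}$-critical cubic NLS there is no unconditional bound on the lifespan (or Strichartz norm) of a nonlinear profile in terms of its $L^3$ size alone. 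Your two bookkeeping claims---that each iteration costs at most a unit of $L^3$ mass and compresses the remaining time by at most a fixed factor---are exactly the quantitative content of the theorem and are not delivered by the profile machinery. The Merle--Rapha\"el virial argument sidesteps this entirely by manufacturing the many disjoint annuli directly, without ever solving an auxiliary nonlinear problem.
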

for $t$ close enough to $T$. 
\begin{rem}
\label{rmk 1 of PR and FM}Merle and Rapha\"el have proved a more
general result for a larger class of Schr\"odinger equations. Indeed
they cover all the mass-supercritical and energy-subcritical cases
for $d\geq3$.
\end{rem}
\begin{rem}
\label{rmk 2 of Pr and FM} $\dot{H}^{s_{c}}$ with $s_{c}=\frac{1}{2}$
is the critical norm of (\ref{eq:Schrodinger}), i.e. this norm is
invariant under the natural scaling of (\ref{eq:Schrodinger}). Note
that one has, via Sobolev embedding, $\dot{H}^{\frac{1}{2}}(\mathbb{R}^{3})\hookrightarrow L^{3}(\mathbb{R}^{3})$. 

When the scaling index $s_{c}=0$, we say it is mass-critical; $s_{c}=1$
corresponds to the energy-critical case; in this article we focus on
the case with $0<s_{c}<1$, i.e. mass supercritical and energy subcritical
case.
\end{rem}
\begin{rem}
The estimate (\ref{eq:rate}) may be rephrased as: there is no type
II\footnote{a type II blow up solution means its critical norm remains bounded
when $t$ tends to the blow up time $T$, i.e. $\limsup_{t\nearrow T}||u(t)||_{\dot{H}^{s_{c}}}<+\infty$,
and a blow up solution is type I if it is not type II.} blow up solution to (\ref{eq:Schrodinger}), for radial initial data
in $\dot{H}^{\frac{1}{2}}\cap\dot{H}^{1}$. 
\end{rem}
Thus, all such finite time blow up solutions do not fall into the
regime of Soliton Resolution Conjecture, which formally predicts all
type II blow up solutions to nonlinear dispersive equation will decouple
into solitary wave living at different scales and a regular term. 

The main result of the current article is to prove an analogous result
for a class of Hartree type equations.
\begin{thm}
\label{thm 2 of ours}Consider the equation
\begin{equation}
\begin{cases}
i\partial_{t}u+\triangle u=-(V*|u|^{2})u. & (t,x)\in\mathbb{R}\times\mathbb{R}^{3},\\
u\mid_{t=0}=u_{0}\in\dot{H}^{\frac{1}{2}}\cap\dot{H}^{1}.
\end{cases}\label{eq:Hartree-2}
\end{equation}

where $u_{0}$ is a radial function. Assume $V(x)$ is a radial function
satisfying
\begin{equation}
\sum x_{j}\partial_{x_{j}}V(x)\leq-\alpha V(x)\label{connection condition}
\end{equation}
for some $\alpha\in(2,+\infty)$, and assume the integrability condition
\begin{equation}
V(x)\in L^{1}(\mathbb{R}^{3}),\;\sum x_{j}\partial_{x_{j}}V(x)\in L^{1}(\mathbb{R}^{3}),\label{the integrability condition}
\end{equation}
 and the following pointwise bound
\begin{equation}
|\sum x_{j}\partial_{x_{j}}V(x)|\leq\frac{C}{|x|^{3}}.\label{eq:technisch reason}
\end{equation}
If $u(t)$ blows up in finite time $T$ , then there exists a constant
$\gamma>0$ such that \footnote{Note that (\ref{eq:rate-2}) implies 
\[
||u(t)||_{\dot{H}^{\frac{1}{2}}}\geq C(u_{0})|\log(T-t)|^{\gamma}
\]
 in $\dot{H}^{\frac{1}{2}}$ setting and our method could give $\gamma=\frac{2}{31+\varepsilon}$,
$\forall\varepsilon>0$.}
\begin{equation}
||u(t)||_{L^{3}}\geq C(u_{0})|\log(T-t)|^{\gamma}\label{eq:rate-2}
\end{equation}

for $t$ close enough to $T$. 
\end{thm}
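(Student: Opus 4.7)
The plan is to adapt the strategy of Merle and Rapha\"el used to prove Theorem~\ref{thm PR and FM} on the cubic NLS, replacing the scaling-based arguments by the structural conditions (\ref{connection condition})--(\ref{eq:technisch reason}). I argue by contradiction: if (\ref{eq:rate-2}) fails for every $\gamma>0$, then along some sequence $t_{n}\nearrow T$ one has $\|u(t_{n})\|_{L^{3}}=o(|\log(T-t_{n})|^{\gamma})$ for arbitrarily small $\gamma$. The goal is to derive a contradiction between this slow growth and the divergence of $\|u(t)\|_{\dot{H}^{1}}$ forced by finite-time blow up.

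The first step is to localize the blow up at the origin. Combining radiality with Strauss's radial Sobolev inequality, together with the assumed slow growth of $\|u(t)\|_{L^{3}}$ and the divergence of $\|u(t)\|_{\dot{H}^{1}}$, I extract a concentration statement: there exists a scale $\lambda(t)\to 0$ as $t\to T$ such that a fixed amount of $L^{3}$-mass (equivalently, of $\dot{H}^{\frac{1}{2}}$-mass via $\dot{H}^{\frac{1}{2}}(\mathbb{R}^{3})\hookrightarrow L^{3}(\mathbb{R}^{3})$) is trapped on the ball $B(0,\lambda(t))$. This is the analogue, done by hand for the radial case, of the concentration--compactness step of \cite{Main reference of FM and PR}.

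The core tool is a localized virial identity. For a smooth radial cutoff $\phi_{R}$ with $\phi_{R}(x)=|x|^{2}$ on $|x|\leq R$ and $\phi_{R}$ bounded for $|x|\geq R$, the Hartree analogue of Glassey's computation yields
\[
\frac{d^{2}}{dt^{2}}\int\phi_{R}|u|^{2}\,dx=4\int\partial_{jk}^{2}\phi_{R}\,\partial_{j}\bar{u}\,\partial_{k}u\,dx-\int\Delta^{2}\phi_{R}\,|u|^{2}\,dx+\mathcal{N}_{R}(u),
\]
where the nonlocal term $\mathcal{N}_{R}(u)$ couples $(V\ast|u|^{2})$ and $((x\cdot\nabla V)\ast|u|^{2})$ against $|u|^{2}$. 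On the inner ball the assumption (\ref{connection condition}) with $\alpha>2$ forces $\mathcal{N}_{R}$ to contribute a strictly positive quantity proportional to a Hartree-type energy, which is how the favorable virial sign is recovered in the absence of the scaling Pohozaev identity used for pure-power NLS. The integrability hypotheses (\ref{the integrability condition}) and the pointwise bound (\ref{eq:technisch reason}) on $x\cdot\nabla V$ are then used to estimate the cross-interactions between the mass on $B(0,R)$ and the mass on its complement; in particular (\ref{eq:technisch reason}) plays the role that scaling homogeneity of $\nabla V$ would play if $V$ were a pure power.

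Combining the virial coercivity with the concentration from the first step produces a Lyapunov-type differential inequality: the second time derivative of a localized virial is bounded below by a power of the critical norm concentrated on scale $R$. Integrating twice on $[t,T]$ yields a lower bound on $\int_{t}^{T}\|u(\tau)\|_{L^{3}}^{p}\,d\tau$ in terms of $T-t$, and iterating this inequality across a dyadic sequence of scales $R_{n}$ converts the polynomial gain per step into the logarithmic lower bound (\ref{eq:rate-2}), with an explicit $\gamma>0$ depending on $\alpha$ (and, according to the footnote, a value like $\gamma=2/(31+\varepsilon)$ after optimization). The main obstacle I anticipate is the analysis of $\mathcal{N}_{R}$: the convolution structure of the nonlinearity mixes the inner and outer regions, so one must prevent the $L^{2}$-tail of $u$ outside $B(0,R)$ from cancelling the positive contribution from (\ref{connection condition}). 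Arranging these estimates with enough uniform margin to survive the iteration and still produce a quantitative $\gamma>0$ is the delicate part of the argument, and it is precisely where (\ref{eq:technisch reason}) and (\ref{the integrability condition}) are essential, substituting for the homogeneity tools available in the NLS setting.
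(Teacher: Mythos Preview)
Your proposal correctly identifies several ingredients: the localized virial identity is indeed the engine, condition (\ref{connection condition}) is what links the virial to the energy, and (\ref{eq:technisch reason}) is precisely what tames the nonlocal inner/outer cross terms in $\mathcal{N}_{R}$. However, the mechanism you describe for extracting the logarithmic lower bound is not the one that actually works, and as written it has a genuine gap.

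The paper does \emph{not} argue by contradiction via a concentration step plus a Lyapunov inequality ``bounded below by a power of the critical norm.'' The virial, after localization and after absorbing the $\Phi$-errors via (\ref{eq:technisch reason}), yields an \emph{upper} bound --- the dispersive estimate $\int_{0}^{\tau_{0}}(\tau_{0}-\tau)\|\nabla v\|_{L^{2}}^{2}\,d\tau\lesssim M_{0}^{\alpha_{2}}\tau_{0}^{3/2}$ --- not a lower bound on $\|u\|_{L^{3}}$. To use this, one first renormalizes: set $\lambda_{u}(t)=\|\nabla u(t)\|_{L^{2}}^{-2}$ and $v^{\lambda}(\tau,x)=\lambda_{u}(t)\bar u(t-\lambda_{u}^{2}\tau,\lambda_{u}x)$, so that $\|\nabla v^{\lambda}(0)\|_{L^{2}}=1$ and $v^{\lambda}$ lives on a time interval of length $\sim e^{N}$ with $N\sim|\log(T-t)|$. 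The dispersive estimate then says the kinetic energy decays in average like $\tau^{-1/2}$, which produces a sequence of good times $\tau_{i}\in[0,e^{i}]$, $i=\sqrt{N},\dots,N$, at which $\lambda_{v}(\tau_{i})=\|\nabla v^{\lambda}(\tau_{i})\|_{L^{2}}^{-2}$ sits in an exponentially separated window. At each $\tau_{i}$ the energy constraint (via (\ref{connection condition})) forces a \emph{uniform} lower bound on $\lambda_{v}(\tau_{i})^{-1}\int_{|x|\le D_{*}\lambda_{v}(\tau_{i})}|v^{\lambda}(\tau_{i})|^{2}$, and a backwards $L^{2}$-flux estimate (again controlled by the $\rho$-norm and the dispersive bound) transports this to $\tau=0$. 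One then has $\sim N/\log M_{0}$ disjoint annuli at scales $\lambda_{v}(\tau_{i})$, each carrying a fixed amount of weighted $L^{2}$ mass of $v^{\lambda}(0)$; summing and H\"older give $\|v^{\lambda}(0)\|_{L^{3}}=\|u(t)\|_{L^{3}}\gtrsim N^{\gamma}$.

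What your sketch is missing, concretely: (i) the renormalization $u\mapsto v^{\lambda}$, without which ``iterate across dyadic scales'' has no anchor; (ii) the Morrey--Campanato quantity $\rho(v,R)=\sup_{R'\ge R}R'^{-1}\int_{R'\le|x|\le 2R'}|v|^{2}$, which is what one actually bootstraps (your Strauss/concentration step does not supply this); and (iii) the disjoint-annuli counting argument that converts per-scale $L^{2}$ lower bounds into the $L^{3}$ growth. Your ``second derivative of the virial bounded below by a power of the critical norm, integrate twice, iterate'' does not describe a closed loop: the virial inequality runs the other way, and without the renormalization and the $\rho$-bootstrap the cross terms you worry about in $\mathcal{N}_{R}$ cannot be absorbed uniformly across the iteration.
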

\begin{rem}
\label{rmk 1 of thm 1}Assumption (\ref{the integrability condition})
seems natural. Assumptions (\ref{connection condition}) and (\ref{eq:technisch reason})
are due to technical reasons.
\end{rem}
\begin{rem}
In order to connect virial identity with energy, we have assumed the
condition (\ref{connection condition}) holds. From (\ref{connection condition}),
we note that either 
\begin{equation}
V(x)\geq\frac{c_{V}}{|x|^{\alpha}}\label{eq:1.15}
\end{equation}

at a neighborhood of the origin or $V(x)$ is non-positive. Note that
$V(x)$ can not be non-positive because in this case there exist no
blow up solutions due to energy conservation law.
\end{rem}
\begin{rem}
\label{rmk 2 of thm 1}Under the assumption (\ref{connection condition}),
by the following virial identity ,
\begin{equation}
\frac{d^{2}}{dt^{2}}\int|x|^{2}|u|^{2}=8\int|\nabla u|^{2}+4\int x_{j}|u|^{2}\partial_{j}(V*|u|^{2})dx,\label{virial identity in Hartree seting}
\end{equation}

one could easily derive blow up solutions with negative energy. For
the completeness of the article, we have supplemented the proof of
this part in Appendix A.
\end{rem}
\begin{rem}
\label{the removable condition}The assumptions on $V(x)$ can be
relaxed. Indeed, (\ref{connection condition}) and $V(x)\in L^{1}(\mathbb{R}^{3})$
imply $\sum x_{j}\partial_{x_{j}}V(x)\in L^{1}(\mathbb{R}^{3})$.
\end{rem}
\begin{rem}
\label{rmk 1 of thm 2}We give an example which satisfies the conditions
(\ref{connection condition}), (\ref{the integrability condition}),
and (\ref{eq:technisch reason}) i.e. the object of the above analysis
is not $\textrm{\O}$, 
\begin{equation}
V(Z)=\frac{1}{|Z|^{3}|\log Z|^{\alpha}}\chi(Z),\;for\;\alpha>1,\label{a special example}
\end{equation}

with
\[
\chi(Z)=\begin{cases}
1 & Z\leq\delta\\
0 & Z\geq2\delta
\end{cases}for\;some\;\delta\;small\;enough.
\]
\end{rem}
We review a series of related work regarding Hartree type equations
and NLS,
\begin{itemize}
\item mass-critical case
\end{itemize}
In the 1990s, Merle's work \cite{key-15} had given the characteristic
in $H^{1}$ for the blow-up solutions with minimal mass in nonlinear
Schr\"odinger equation\footnote{The ground state associated to (\ref{mass criticla NLS}) is the unique
positive solution to 
\[
\triangle Q+Q^{1+\frac{4}{d}}=Q,
\]
which supply a stationary solution to (\ref{mass criticla NLS}) with
$u(t,x)=e^{it}Q(x)$.},
\begin{equation}
\begin{cases}
i\partial_{t}u+\triangle u=-|u|^{\frac{4}{d}}u. & (t,x)\in\mathbb{R}\times\mathbb{R}^{d},\\
u\mid_{t=0}=u_{0}.
\end{cases}\label{mass criticla NLS}
\end{equation}
 And Dodson improved this result in $L^{2}$ setting in \cite{Dodson 2021-1,Dodson 2021-2}.
The parallel result in Hartree-equation setting can be found in \cite{key-16}
and \cite{key-17}. 

When the mass is below the ground state $Q(x)$, Weinstein proved
the solution is global and scattering in $H^{1}$ in \cite{Weinstein 1982}
and Dodson improved this result to $L^{2}$ setting in \cite{Dodson 2015}.

When the mass is beyond the ground state, Bourgain and Wang in \cite{Bourgain-W}
constructed a type of blow up solutions with the blow up rate $||\nabla u(t)||_{L^{2}}\sim\frac{1}{T-t}$.
Besides, another type of blow up solutions, the log-log blow up solutions,
are suggested numerically by Landman, Papanocolaou, Sulem, Sulem in
\cite{numical check 1988}. This kind of solutions blow up in finite
time with the rate $||\nabla u(t)||_{L^{2}}\sim(\frac{\log|\log(T-t)|}{T-t})^{\frac{1}{2}}$
. Perelman firstly constructed this kind of solutions in her work
\cite{key-18}. After that, the log-log blow-up solutions have been
studied in depth and comprehensively by Merle and Rapha\"el in a
series of work \cite{key-8,key-9,key-10,key-19,key-20,key-21}. They
give a more complete portrayal of the blow up rate in the vicinity
of the ground state solution and a classification of the blow up solutions.

We should remark, for the mass-critical focusing NLS, when the mass
is much larger than threshold, Merle constructed a $k$-points blow
up solution in \cite{Merle 1990}, and readers can also refer to \cite{F.P P.R 2007,Fan Chenjie 2017}
for the study of weakly interacting multi bubbles blow up dynamics
for NLS. Besides, Martel and Rapha\"el in \cite{Y.M P.R 2018} constructed
a multi-bubbles blow up solution, with the rate $||\nabla u(t)||_{L^{2}}\sim\frac{|\log(T-t)|}{T-t}$
due to strong interactions.
\begin{itemize}
\item mass-supercritical and energy-subcritical case
\end{itemize}
Between this range, for the focusing 3d cubic NLS, Merle and Rapha\"el
in \cite{Main reference of FM and PR} gave a universal blow-up lower
bound in the radial case. Besides, towards this model, there is a
series of work concerning the following quantity,\footnote{$M(u):=\int|u|^{2}$ and $E(u):=\frac{1}{2}\int|\nabla u|^{2}-\frac{1}{4}\int|u|^{4}$.
And $Q$ is the unique $H^{1}$ radial positive solution of 
\[
\triangle Q-\frac{1}{2}Q+|Q|^{2}Q=0.
\]
}
\[
M\varXi:=\frac{M[u]E[u]}{M[Q]E[Q]},
\]

readers can refer to for \cite{Roudenko 2008 CMP,Roudenko 2008 RMI,Roudenko 2010 CPDE}
when $M\varXi\in(0,1)$, \cite{S. Roudenko 2008 =00003D1} for $M\varXi=1$
and \cite{K. Nakanishi and W. Schlag 2012 >1,Duyckaerts and Roudenko 2015 CMP}
in the case $M\varXi>1$.

For the focusing Hartree equation, readers may also refer to \cite{key-10-1}
in this range. Our result is also a step toward understanding some
universal blow up behaviour and the connection between NLS and Hartree
type equations.
\begin{itemize}
\item energy-critical case 
\end{itemize}
For the focusing energy-critical Schr\"odinger equation, Kenig and
Merle in \cite{S. Roudenko 2008 =00003D1} used a concentration compactness
argument and a rigidity theorem to prove any radial solutions $u(t)$
in $d=3,4,5$ which satisfy $E(u_{0})<E(W)$ and $||u_{0}||_{\dot{H}^{1}}<||W||_{\dot{H}^{1}}$\footnote{The $W(x)$ is the unique radial positive solution in $\dot{H}^{1}$
to 
\[
\triangle W+|W|^{\frac{4}{d-2}}W=0,
\]

with $d\geq3$.} must be global and scatter. For the nonradial case, readers can refer
to \cite{Dodson 2019} in $d=4$ and \cite{key-30} $d\geq5$.

Returning to the Hartree-equation setting, \cite{key-35} treated
the focusing case, and proved a parallel result with \cite{key-30}. 

\subsection{A review of Merle and Raphael's work in \cite{Main reference of FM and PR}
and the connection with our result}

Since our work rely on the method developed by Merle and Rapha\"el
in \cite{Main reference of FM and PR}, let us review some points
and highlight the main quantities in their analysis. There is also
some interesting work related to this topic, one can refer to \cite{Tao 2019}
for Navier Stokes equation, \cite{key-12} for focusing nonlinear
Klein-Gordon equation and \cite{Y.M P.R 2018} for inhomogeneous nonlinear
Schr\"odinger equation.

In this subsection, in order to make the idea of the article more
concise and clear, we only review a weaker version of Merle and Rapha\"el.
More precisely, we impose the following non-positive energy assumption
for Theorem \ref{thm PR and FM},

\label{non-positive energy}
\[
E(u_{0})\leq0,
\]
and the critical norm under consideration becomes $\dot{H}^{\frac{1}{2}}$.
Towards the nonlinear Schr\"odinger equation (\ref{eq:Schrodinger}),
for $t$ close enough to the blow up time $T$, we renormalize $u(t)$
with its $\dot{H}^{1}$ norm, i.e. let 
\begin{equation}
\lambda_{u}(t)=(\frac{1}{||\nabla u(t)||_{L^{2}}})^{2},\label{eq:scaling-4}
\end{equation}
 and define the renormalization of $u(t)$:
\begin{equation}
v(\tau,x)=\lambda_{u}(t)\bar{u}(t-\lambda_{u}(t)^{2}\tau,\lambda_{u}(t)x).\label{eq:renorm-1}
\end{equation}

From local theory of NLS, we note that 
\begin{equation}
\lambda_{u}(t)\lesssim\sqrt{T-t}.\label{the scaling lower bound}
\end{equation}

Then a long time behaviour of blow up dynamics has been transformed
into a Cauchy problem that satisfies the following special conditions:
\begin{equation}
\begin{cases}
i\partial_{\tau}v+\triangle v(\tau,x)=|v|^{2}v(\tau,x), & (\tau,x)\in[0,\frac{1}{\lambda_{u}(t)}]\times\mathbb{R}^{3},\\
v(\tau,x)\mid_{\tau=0}=\lambda_{u}(t)\bar{u}(t,\lambda_{u}(t)x),
\end{cases}\label{special Cauchy prob}
\end{equation}

with 
\begin{equation}
||v(0,x)||_{\dot{H}^{\frac{1}{2}}}=||u(t)||_{\dot{H}^{\frac{1}{2}}},\label{the scaling norm}
\end{equation}
\begin{equation}
||v(0,x)||_{\dot{H}^{1}}=1,\label{the scaling norm of H1}
\end{equation}

and the non-positive energy
\begin{equation}
E(v_{0})\leq0.\label{non-positive energy for v}
\end{equation}

Let

\begin{equation}
N(t)=-\log\lambda_{u}(t),\label{time transformation}
\end{equation}
we could conclude, Merle and Rapha\"el converted Theorem \ref{thm PR and FM}
into proving the following conclusion, there exists a universal constant
$\gamma>0$ such that 
\begin{equation}
||v(0,x)||_{\dot{H}^{\frac{1}{2}}}\geq N^{\gamma}.\label{the goal}
\end{equation}

The above analysis reduces the difficulty of the problem to some extent.
We know that the blow up behaviour of the solution is a long-time dynamic
behaviour, and there are few tools that can be directly applied to
characterize the explosion. However, if we are now looking at the
local behaviour of the solution, then a rich local theory of the solutions
can be applied, making the problem possible. 

From the above analysis, it is also a very natural thing to give the
connection between the lower bounds of the blow up rate of the Schr\"{o}dinger
equation and Hartree equation, which also gives the possibility to
apply the local theory. 

To further explain the work by Merle and Rapha\"el, we now introduce
the following notations, the scaling invariant Morrey-Campanato norm
\begin{equation}
\rho(u,R)=\sup_{R'\geq R}\frac{1}{R'}\int_{R'\leq|x|\leq2R'}|u(x)|dx,\label{Morrey-Campanato norm}
\end{equation}

a quantity related to the initial data,
\[
M_{0}=\frac{4||v(0)||_{\dot{H}^{\frac{1}{2}}}}{C_{GN}},
\]

where $C_{GN}$ is a universal constant related to Gagliardo-Nirenberg
inequality and a similar definition to (\ref{eq:scaling-4}),
\begin{equation}
\lambda_{v}(\tau)=\frac{1}{||v(\tau)||_{\dot{H}^{1}}^{2}}.\label{scaling-5-1}
\end{equation}
.

In order to achieve the goal (\ref{the goal}), there are three steps
need to be implemented,

Step 1. Uniform control of the $\rho$ norm and the dispersive estimate.

To achieve the first goal, we give the following proposition,
\begin{prop}
\label{prop: p norm and H1 norm} Let $v(\tau)\in C([0,e^{N}],\dot{H}^{\frac{1}{2}}\cap\dot{H}^{1})$
be a radially symmetric solution to (\ref{eq:Schrodinger}) where
$N$ is a sufficiently large number and (\ref{non-positive energy for v}),
(\ref{the scaling norm of H1}) hold. Then there exist universal constants
$C_{1}$, $\alpha_{1}$ and $\alpha_{2}$ such that the following
hold, $\forall\tau_{0}\in[0,e^{N}]$,

(i) the uniform control of the $\rho$ norm:
\begin{equation}
\rho(v(\tau_{0}),C_{1}M_{0}^{\alpha_{1}}\sqrt{\tau_{0}})\leq C_{1}M_{0}^{2}.\label{the control of p norm}
\end{equation}

(ii) the dispersive estimate:
\begin{equation}
\int_{0}^{\tau_{0}}(\tau_{0}-\tau)||v(\tau)||_{\dot{H}^{1}}^{2}d\tau\leq C_{1}M_{0}^{\alpha_{2}}\tau_{0}^{\frac{3}{2}}.\label{the dispersive estimate}
\end{equation}

Moreover, if we assume $M_{0}^{\alpha_{2}}<e^{\frac{\sqrt{N}}{2}}$,
there exist a sequence of $\{\tau_{i}\}$ with $\tau_{i}\in[0,e^{i}]$
and $i\in\{\sqrt{N},\sqrt{N}+1,...,N\}$ with the following bounds
holds, 
\begin{equation}
\frac{\sqrt{\tau_{i}}}{\lambda_{v}(\tau_{i})}\leq C_{1}M_{0}^{\alpha_{2}}\;\;\and\;\;\lambda_{v}(\tau_{i})\in[\frac{1}{10C_{1}M_{0}^{\alpha_{2}}}e^{\frac{i-1}{2}},\frac{10}{C_{1}M_{0}^{\alpha_{2}}}e^{\frac{i}{2}}].\label{the chosen time}
\end{equation}
\end{prop}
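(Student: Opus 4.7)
The plan is to follow Merle--Rapha\"el's strategy, which combines a space--localized virial identity with the non-positive energy hypothesis, Hardy and radial Sobolev estimates, and a bootstrap argument on the Morrey--Campanato norm $\rho$. Note first that $E(v_{0})\leq 0$ together with $\|v(0)\|_{\dot H^{1}}=1$ and Gagliardo--Nirenberg force $M_{0}$ to be bounded below by an absolute constant, so throughout I may (and will) think of $M_{0}$ as large.

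For part (ii), I would fix $R=KM_{0}^{\beta}\sqrt{\tau_{0}}$ with $K,\beta$ to be chosen, pick a smooth radial cutoff $\phi$ with $\phi(r)=r^{2}$ on $[0,1]$ and $\phi'=0$ for $r\geq 2$, and set $\phi_{R}(x)=R^{2}\phi(|x|/R)$ and $I_{R}(\tau)=\int\phi_{R}|v|^{2}\,dx$. The classical computation gives
\begin{equation*}
\ddot I_{R}(\tau)=8\int|\nabla v|^{2}-12\int|v|^{4}+\mathrm{Err}_{R}(\tau),
\end{equation*}
where $\mathrm{Err}_{R}$ is supported in $\{R\leq|x|\leq 2R\}$ and is bounded by $\int_{|x|\sim R}|\nabla v|^{2}+R^{-2}\int_{|x|\sim R}|v|^{4}+R^{-4}\int_{|x|\sim R}|v|^{2}$. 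Using $E(v)\leq 0$ (so $\int|v|^{4}\geq 2\int|\nabla v|^{2}$) yields $\ddot I_{R}\leq -16\|\nabla v\|_{L^{2}}^{2}+\mathrm{Err}_{R}$. Integrating the Taylor identity $\int_{0}^{\tau_{0}}(\tau_{0}-\tau)\ddot I_{R}\,d\tau=I_{R}(\tau_{0})-I_{R}(0)-\tau_{0}\dot I_{R}(0)$ and dropping $-I_{R}(\tau_{0})\leq 0$ produces the desired quantity on the left. The boundary terms are controlled by the $\dot H^{1/2}$ Hardy inequality $I_{R}(0)\lesssim R^{3}\|v_{0}\|_{\dot H^{1/2}}^{2}\lesssim R^{3}M_{0}^{2}$ and Cauchy--Schwarz $|\dot I_{R}(0)|\lesssim R\|\nabla v_{0}\|_{L^{2}}(R\|v_{0}\|_{\dot H^{1/2}}^{2})^{1/2}$. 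The error term $\int_{0}^{\tau_{0}}(\tau_{0}-\tau)\mathrm{Err}_{R}\,d\tau$ is absorbed by radial Sobolev $|x||v(x)|\lesssim \|v\|_{\dot H^{1}}^{1/2}\|v\|_{\dot H^{1/2}}^{1/2}$ combined with the $\rho$-control in (i); in particular the annular $L^{4}$ term becomes $\lesssim R^{-1}\rho(v,R)M_{0}$. Balancing, one extracts (ii) with some explicit $\alpha_{2}$.

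For part (i), I would run a continuity/bootstrap argument: let $T_{*}\in(0,e^{N}]$ be maximal such that $\rho(v(\tau),C_{1}M_{0}^{\alpha_{1}}\sqrt{\tau})\leq 2C_{1}M_{0}^{2}$ for all $\tau\in[0,T_{*}]$. Assuming $T_{*}<e^{N}$, the error estimates above are valid on $[0,T_{*}]$, so (ii) holds up to $T_{*}$. A pigeonhole on the resulting bound produces for each $\tau\leq T_{*}$ some $\tau'\leq\tau$ with $\|v(\tau')\|_{\dot H^{1}}^{2}\lesssim M_{0}^{\alpha_{2}}\tau^{-1/2}$, and propagating this forward via the local well-posedness in $\dot H^{1/2}\cap\dot H^{1}$, together with almost-conservation of the $\dot H^{1/2}$ norm, yields via radial Sobolev a strict improvement $\rho(v(T_{*}),C_{1}M_{0}^{\alpha_{1}}\sqrt{T_{*}})<C_{1}M_{0}^{2}$ provided $\alpha_{1}$ is chosen large compared to $\alpha_{2}$. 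This contradicts maximality of $T_{*}$, giving (i). Finally, the good-times sequence $\{\tau_{i}\}$ follows by applying (ii) on each dyadic window: from $\int_{0}^{e^{i}}(e^{i}-\tau)\|v(\tau)\|_{\dot H^{1}}^{2}d\tau\leq C_{1}M_{0}^{\alpha_{2}}e^{3i/2}$, pigeonhole on $[e^{i-1},e^{i}]$ selects $\tau_{i}$ with $\|v(\tau_{i})\|_{\dot H^{1}}^{2}\leq \tilde C M_{0}^{\alpha_{2}}e^{-i/2}$, i.e.\ $\lambda_{v}(\tau_{i})\geq e^{i/2}/(\tilde C M_{0}^{\alpha_{2}})$, and the matching upper bound for $\lambda_{v}(\tau_{i})$ is propagated from the previous level $\tau_{i-1}$ via the local theory.

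The main obstacle is closing the bootstrap in (i): the error terms driving (ii) depend on $\rho$, which is itself the object one is trying to estimate, so the small parameter coming from the truncation scale $R\sim M_{0}^{\alpha_{1}}\sqrt{\tau_{0}}$ must beat the Morrey--Campanato loss. Concretely, one must choose $\alpha_{1}$ strictly larger than a threshold determined by the radial Sobolev/Hardy exchange rate between $\dot H^{1/2}$, $\dot H^{1}$ and the annular $L^{4}$ mass. The assumption $M_{0}^{\alpha_{2}}<e^{\sqrt{N}/2}$ for the good-times statement is precisely the bookkeeping window in which both the pigeonhole and local persistence arguments are consistent.
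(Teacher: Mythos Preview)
Your bootstrap framework and the twice-integrated virial identity are correct, but the argument has a genuine gap at the step where you bound the momentum term $\dot I_{R}(0)$. Your Cauchy--Schwarz bound
\[
|\dot I_{R}(0)|\lesssim R\,\|\nabla v_{0}\|_{L^{2}}\bigl(R\,\|v_{0}\|_{\dot H^{1/2}}^{2}\bigr)^{1/2}\lesssim R^{3/2}M_{0}
\]
gives, with $R\sim M_{0}^{\beta}\sqrt{\tau_{0}}$, the contribution $\tau_{0}\,|\dot I_{R}(0)|\lesssim M_{0}^{3\beta/2+1}\tau_{0}^{7/4}$. But the dispersive estimate you want is $\lesssim M_{0}^{\alpha_{2}}\tau_{0}^{3/2}$; the extra $\tau_{0}^{1/4}$ is fatal since $\tau_{0}$ ranges up to $e^{N}$. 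No choice of $\alpha_{1},\alpha_{2}$ absorbs a power of $\tau_{0}$. The paper flags exactly this obstruction and circumvents it by bootstrapping \emph{simultaneously} on $\rho$ and on the dispersive bound, then using the latter to find a ``good time'' $\tilde\tau_{0}\in[\tfrac{1}{4}\varepsilon^{2/3}\tau_{0},\tfrac{1}{2}\varepsilon^{2/3}\tau_{0}]$ with $\|\nabla v(\tilde\tau_{0})\|_{L^{2}}^{2}\lesssim G_{\varepsilon}\tilde\tau_{0}^{-1/2}$. One bounds the localized momentum at $\tilde\tau_{0}$ (where the $\dot H^{1}$ decay compensates the missing $\tau_{0}^{1/4}$) and then integrates the crude estimate $|\ddot I_{R}|\lesssim |E|+\|\nabla v\|_{L^{2}}^{2}+R^{-1}\rho^{3}$ backward from $\tilde\tau_{0}$ to $0$. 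This produces the correct bound $\dot I_{R}(0)+E\tau_{0}\lesssim M_{0}^{2}A^{3}\varepsilon^{-2/3}\tau_{0}^{1/2}$, and the bootstrap closes.

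Your route to (i) is also off. The paper does not use local well-posedness or ``almost-conservation of $\dot H^{1/2}$'' to improve $\rho$; indeed the $\dot H^{1/2}$ norm is \emph{not} almost conserved (its growth is the content of the theorem), and local persistence gives no obvious control of an annular $L^{2}$ mass at a much later time. Instead, once the momentum estimate above is in hand, the very same twice-integrated virial inequality, divided by $R^{3}$, yields directly
\[
\frac{1}{R}\int_{R\leq|x|\leq 2R}|v(\tau_{0})|^{2}\lesssim M_{0}^{2}+\frac{M_{0}^{2}}{\varepsilon^{2/3}}+\frac{M_{0}^{6}}{\varepsilon^{3}A^{4}}\leq \frac{M_{0}^{2}}{\varepsilon},
\]
which is the improved $\rho$ bound. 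So (i) and (ii) are closed together from the same identity, and the only nontrivial input is the good-time trick for the momentum.
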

There are two points we should note,
\begin{rem}
$\begin{cases}
\alpha_{1}=1,\\
\alpha_{2}=5
\end{cases}$ is an allowable value of the above proposition, and it is related
to the blow up rate in Theorem \ref{thm PR and FM}, we could see
it more clearly to achieve step 2.
\end{rem}
\begin{rem}
The conclusion (\ref{the chosen time}) in proposition \ref{prop: p norm and H1 norm}
is a direct inference of (\ref{the dispersive estimate}) with the
help of (\ref{the scaling norm of H1}). Since (\ref{the dispersive estimate})
only gives an estimate of the decay of $||v(\tau)||_{\dot{H}^{1}}$
in the average sense, we can only get (\ref{the chosen time}) at
some special time rather than a pointwise estimate. 
\end{rem}
Step 2. Lower bound on a weighted local $L^{2}$ norm of $v(0)$.

Now we are ready to give a uniform lower bound on a weighted local
$L^{2}$ norm of $v(0)$, 
\begin{prop}
Let $v(\tau)$ satisfies the conditions in proposition (\ref{prop: p norm and H1 norm})
and $\{\tau_{i}\}$ are chosen in proposition (\ref{prop: p norm and H1 norm}),
then the following weighted $L^{2}$ norm on a sufficiently large
ball holds,
\begin{equation}
\frac{1}{\lambda_{v}(\tau_{i})}\int_{|x|\leq M_{0}^{2+2\alpha_{2}}\lambda_{v}(\tau_{i})}|v(0)|^{2}\geq c_{3},\label{the lower bound ona ball}
\end{equation}

where $c_{3}$ is a universal constant. 
\end{prop}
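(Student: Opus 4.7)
The plan is to first establish an $L^{2}$-concentration of $v(\tau_{i})$ at the natural scale $\lambda_{v}(\tau_{i})$, and then to transport this local mass backwards to $\tau=0$ via a truncated mass identity whose flux error is absorbed using the dispersive estimate (\ref{the dispersive estimate}) together with the Morrey-Campanato bound (\ref{the control of p norm}) of Proposition \ref{prop: p norm and H1 norm}.

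For the concentration at $\tau_{i}$, one starts from $\|\nabla v(\tau_{i})\|_{L^{2}}^{2}=1/\lambda_{v}(\tau_{i})$ (by (\ref{scaling-5-1})) combined with the non-positive energy bound (\ref{non-positive energy for v}), which forces $\|v(\tau_{i})\|_{L^{4}}^{4}\geq 2/\lambda_{v}(\tau_{i})$. The $L^{4}$ tail on $\{|x|\geq A\lambda_{v}(\tau_{i})\}$ is controlled by the radial Sobolev bound $|v(r)|\leq Cr^{-1/2}\|\nabla v\|_{L^{2}}$ in $\mathbb{R}^{3}$, refined on the far field by the Morrey-Campanato bound (which is legal at scales $R\gtrsim M_{0}^{\alpha_{1}}\sqrt{\tau_{i}}\simeq M_{0}^{\alpha_{1}+\alpha_{2}}\lambda_{v}(\tau_{i})$ thanks to (\ref{the chosen time})). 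Choosing $A=A_{0}(M_{0})$ growing polynomially in $M_{0}$ and interpolating the remaining inner $L^{4}$ mass against the radial $L^{\infty}$ bound yields
\[
\int_{|x|\leq A_{0}\lambda_{v}(\tau_{i})}|v(\tau_{i},x)|^{2}\,dx \;\geq\; c_{*}\,\lambda_{v}(\tau_{i})
\]
for some universal $c_{*}>0$.

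For the back-propagation, take $\chi_{R}$ a smooth radial cutoff with $\chi_{R}\equiv 1$ on $\{|x|\leq R\}$ and supported in $\{|x|\leq 2R\}$. The standard local mass identity for (\ref{eq:Schrodinger}) reads
\[
\int |v(0)|^{2}\chi_{R}\,dx = \int |v(\tau_{i})|^{2}\chi_{R}\,dx + 2\operatorname{Im}\int_{0}^{\tau_{i}}\!\!\int \bar v\,\nabla v\cdot \nabla\chi_{R}\,dx\,d\tau.
\]
The flux integrand is supported in the annulus $\{R\leq |x|\leq 2R\}$, so that combining $|\nabla\chi_{R}|\lesssim 1/R$, the pointwise radial bound $|v|\leq Cr^{-1/2}\|\nabla v\|_{L^{2}}$, the annular Morrey-Campanato bound $\int_{R\leq|x|\leq 2R}|v|\,dx\leq CM_{0}^{2}R$, and Cauchy-Schwarz gives a flux rate of the form $CM_{0}^{a}R^{-\sigma}\|\nabla v\|_{L^{2}}^{3/2}$ with some $\sigma>0$. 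Time integration together with the averaged consequence $\int_{0}^{\tau_{i}}\|\nabla v\|_{L^{2}}^{2}\,d\tau\lesssim M_{0}^{\alpha_{2}}\sqrt{\tau_{i}}$ (a reverse Markov corollary of (\ref{the dispersive estimate})) and (\ref{the chosen time}) then produces a total flux error
\[
\mathcal{E}(R) \;\leq\; C\,M_{0}^{b}\,R^{-\sigma}\,\lambda_{v}(\tau_{i})^{\theta}
\]
for some explicit $b,\sigma,\theta>0$.

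The radius $R=M_{0}^{2+2\alpha_{2}}\lambda_{v}(\tau_{i})$ is the smallest one at which simultaneously (i) $R\geq A_{0}(M_{0})\lambda_{v}(\tau_{i})$, so that the concentration ball from Step 1 is contained in $\{|x|\leq R\}$, and (ii) $\mathcal{E}(R)\leq\tfrac{1}{2}c_{*}\lambda_{v}(\tau_{i})$, which follows from the lower bound $\lambda_{v}(\tau_{i})\geq cM_{0}^{-\alpha_{2}}e^{(i-1)/2}$ with $i\geq\sqrt{N}$ and $N$ large. Plugging into the mass identity then yields (\ref{the lower bound ona ball}). The hard part is the flux estimate: the naive radial Sobolev bound alone only yields a flux of order $\|\nabla v\|_{L^{2}}^{2}$ with no $R$-gain, which loses against the dispersive estimate; it is precisely the additional factor $R^{-\sigma}$ produced by the annular Morrey-Campanato bound that makes the calibration possible, and the exponent $2+2\alpha_{2}$ records exactly the balance between that gain and the $M_{0}^{\alpha_{2}}$ loss inherited from (\ref{the dispersive estimate}).
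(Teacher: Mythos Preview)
Your two-step plan---concentration of $v(\tau_{i})$ at scale $\lambda_{v}(\tau_{i})$ via the energy constraint, then backward propagation of this local mass to $\tau=0$ by a cut-off mass identity with the flux absorbed using (\ref{the control of p norm}) and (\ref{the dispersive estimate})---is exactly the scheme the paper uses (see the Remark following the statement, and the detailed argument in the Hartree analogue, Proposition~\ref{prop 2 of ours}).

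The one place your write-up drifts is the flux estimate. The Morrey--Campanato quantity $\rho(v,R)$ controls $\tfrac{1}{R'}\int_{R'\le|x|\le 2R'}|v|^{2}$, not $\int |v|$; with that in hand the annular flux is bounded by a single Cauchy--Schwarz,
\[
\Bigl|\operatorname{Im}\!\int \bar v\,\nabla v\cdot\nabla\chi_{R}\Bigr|
\;\le\;\frac{C}{R}\,\|\nabla v\|_{L^{2}}\Bigl(\int_{R\le|x|\le 2R}|v|^{2}\Bigr)^{1/2}
\;\le\;\frac{C\,M_{0}}{R^{1/2}}\,\|\nabla v\|_{L^{2}},
\]
so the radial Sobolev bound is not needed here and the exponent on $\|\nabla v\|_{L^{2}}$ is $1$, not $3/2$. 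Integrating in time with the Cauchy--Schwarz/dispersive trick $\int_{0}^{\tau_{i}}\|\nabla v\|\,d\tau\le C M_{0}^{\alpha_{2}/2}\tau_{i}^{3/4}$ then gives, after dividing by $\lambda_{v}(\tau_{i})$ and writing $R=D\lambda_{v}(\tau_{i})$, an error of the clean form $C M_{0}^{1+\alpha_{2}/2}F_{*}^{3/2}/D^{1/2}$ with $F_{*}=\sqrt{\tau_{i}}/\lambda_{v}(\tau_{i})\le C_{1}M_{0}^{\alpha_{2}}$; no appeal to $\lambda_{v}(\tau_{i})$ being large is needed. Your version, which pairs the radial $L^{\infty}$ bound with an $L^{1}$-type annular control, produces the extra half power of $\|\nabla v\|_{L^{2}}$ and then forces you to invoke the size of $\lambda_{v}(\tau_{i})$ to close---an unnecessary detour that also does not reproduce the stated exponent.
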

\begin{rem}
The idea of proof is the following: first we derive the lower bound
at time $\tau=\tau_{i}$ with the aid of non-positive energy constraint,
i.e.
\begin{equation}
\frac{1}{\lambda_{v}(\tau_{i})}\int_{|x|\leq M_{0}^{2+2\alpha_{2}}\lambda_{v}(\tau_{i})}|v(\tau_{i})|^{2}\geq2c_{3}.\label{eq:lower bound-2}
\end{equation}
\end{rem}
Second, thanks to (\ref{the control of p norm}) and (\ref{the dispersive estimate}),
we conclude the difference between $\tau=0$ and $\tau=\tau_{i}$
is sufficiently small, i.e. $\exists\;\varepsilon>0$ small enough,
such that 
\begin{equation}
|\frac{1}{\lambda(\tau_{i})}\int_{|x|\leq M_{0}^{2+2\alpha_{2}}\lambda_{v}(\tau_{i})}|v(\tau_{i})|^{2}-\frac{1}{\lambda(\tau_{i})}\int_{|x|\leq M_{0}^{2+2\alpha_{2}}\lambda_{v}(\tau_{i})}|v(0)|^{2}|<\varepsilon.\label{eq:lower bound-3}
\end{equation}

Combining (\ref{eq:lower bound-2}) and (\ref{eq:lower bound-3}),
we derive the final conclusion (\ref{the lower bound ona ball}). 
\begin{rem}
The lower bound (\ref{the lower bound ona ball}) has implied the
choice of $\gamma$ in Theorem \ref{thm PR and FM}. A straightforward
algebraic computation of our analysis implies
\[
\gamma=\frac{1}{4+2\alpha_{2}+\varepsilon},
\]

for $\forall\varepsilon>0$, i.e we could derive $\gamma=\frac{1}{14+\varepsilon}$
in this setting while Merle and Rapha\"el in \cite{Main reference of FM and PR}
give 

\begin{equation}
\gamma=\frac{1}{12}.\label{blow up rate of PR}
\end{equation}
\end{rem}
Step 3. The construction of $N^{\gamma}$ disjoint annuli.

Now we are going to construct sufficiently many disjoint annuli. We
need to pay attention to the following two points,
\begin{itemize}
\item These $N^{\gamma}$ annuli are disjoint.
\item At each annulus, the similar lower bound still holds in (\ref{the lower bound ona ball}).
\end{itemize}
A direct computation by H\"older inequality implies the following
choice of the size of annuli satisfies the second point,
\[
\mathscr{C}_{i}:=\{x\in\mathbb{R}^{3}\mid\frac{\lambda_{v}(\tau_{i})}{M_{0}^{2+\alpha_{2}}}\leq|x|\leq\lambda_{v}(\tau_{i})M_{0}^{2+\alpha_{2}}\}.
\]

In order to satisfy the first point above, we need to choose the number
$p$ such that 
\[
\frac{\lambda_{v}(\tau_{i+p})}{M_{0}^{2+\alpha_{2}}}\geq\lambda_{v}(\tau_{i})M_{0}^{2+\alpha_{2}}.
\]

Thanks to (\ref{the chosen time}), we can choose $p$ such that $e^{\frac{p}{2}}\thickapprox M_{0}^{4+2\alpha_{2}}$
which gives the derived choice of $N^{\gamma}$ disjoint annuli.

Based on the above three steps, we have the following technical remark,
\begin{rem}
\label{a weak version}The condition (\ref{non-positive energy for v})
can be weaken by another three assumptions, and the stronger version
will be stated in section 3. Here we are only using condition (\ref{non-positive energy for v})
to state this proposition for convenience.
\end{rem}
Now we want to talk about the relation in this paper between NLS and
Hartree equation,
\begin{itemize}
\item Using the above techniques for proving Theorem \ref{thm PR and FM},
we can obtain result that is parallel in the Hartree equation, which
is also the purpose of our article.
\item If we get the conclusion in Hartree equation directly, then we can
approximate $\delta$ function by doing a suitable scaling transformation
of the potential function $V(x)$\footnote{We should note that the conditions (\ref{connection condition}),
(\ref{the integrability condition}), (\ref{eq:technisch reason})
is invariant under the transformation (\ref{a scaling transformation for potential function}).},
\begin{equation}
V_{\varepsilon}(x)=\frac{1}{\varepsilon^{3}}V(\frac{x}{\varepsilon}),\;\forall\varepsilon>0.\label{a scaling transformation for potential function}
\end{equation}
 After using the mature local theory\footnote{The above three steps are local versions for the dynamics of the solutions,
so we could apply local theory.}, we prove that the Schr\"odinger equation is also valid in this
case.
\end{itemize}
The above two points give a connection between NLS and Hartree equation,
and we supply complete proof in section 3.

\subsection{Strategy and structure of the paper}

We use the robust strategy by Merle and Rapha\"el in \cite{Main reference of FM and PR}.
We should note that there are two key points:

(1) The additional error terms.

The condition (\ref{the integrability condition}) is natural assumption
in our problem setting, and the role of the condition (\ref{connection condition})
is to connect the energy (\ref{eq:energy-1}) with the virial identity
(\ref{virial identity in Hartree seting}). Another assumption (\ref{eq:technisch reason})
is technical, because we need to control additional error terms which
do not exist in the Schr\"odinger-equation setting. As in the Schr\"odinger
equation, the ``potential function'' is $\delta(x)$. However, in
our setting, $V(x)$ belongs to $L^{1}(\mathbb{R}^{3})$. So, when
the blow-up phenomenon happens, there are some additional error terms
we should treat. We give a priori control on two channels which helps
us to overcome this difficulty. For more details, one can see step
2 in Proposition \ref{prop 1 of ours}. 

(2) The difficulty caused by the no-scaling property of $V(x)$. 

Since we treat the no-scaling case for the potential function $V(x)$,
we should check carefully the constants chosen in Proposition \ref{prop 1 of ours}
and \ref{prop 2 of ours}. After the renormalization of $u(t)$, $v^{\lambda(t)}(\tau,x):=\lambda_{u}(t)\bar{u}(t-\lambda_{u}^{2}(t)\tau,\lambda_{u}(t)x)$
at different time $t$ satisfy different equations 
\begin{equation}
\begin{cases}
i\partial_{\tau}v^{\lambda(t)}+\triangle v^{\lambda(t)}=-(V_{\lambda(t)}*|v^{\lambda(t)}|^{2})v^{\lambda(t)}. & (\tau,x)\in[0,\frac{t}{\lambda(t)^{2}})\times\mathbb{R}^{3},\;V_{\lambda(t)}(x)=\lambda(t)^{3}V(\lambda(t)x),\\
v^{\lambda(t)}\mid_{\tau=0}=\lambda(t)\bar{u}(t,\lambda(t)x)\in\dot{H}^{\frac{1}{2}}\cap\dot{H}^{1},
\end{cases}\label{eq:scaling equation}
\end{equation}
where $\lambda_{u}(t)=(\frac{1}{||\nabla u(t)||_{L^{2}}})^{2}$. For
these $v^{\lambda(t)}(\tau,x)$, we should give a uniform estimate
which is independent of the time $t$. We will deal with this very
carefully both in Proposition \ref{prop 1 of ours} and \ref{prop 2 of ours},
which is also the core of our analysis. Here, we explain from two
aspects why the potential function does not have scaling invariant
property will not have an essential impact for our analysis.

(i) The renormalization of $u(t)$

$v^{\lambda(t)}(\tau,x)$ defined above satisfies equation (\ref{eq:scaling equation}),
and it also follows the law of conservation of energy:
\begin{align}
E^{\lambda}(v^{\lambda(t)}(\tau)) & =E^{\lambda}(v^{\lambda(t)}(0))=\frac{1}{2}\int|\nabla v^{\lambda(t)}(0,x)|^{2}dx-\frac{1}{4}\int(V_{\lambda(t)}*|v^{\lambda(t)}|^{2})(0,x)|v^{\lambda(t)}|^{2}(0,x)dx\label{eq:scaling energy}\\
 & =\lambda(t)[\frac{1}{2}\int|\nabla u(t,x)|^{2}dx-\frac{1}{4}\int(V*|u|^{2})|u|^{2}(t,x)dx]\nonumber \\
 & =\lambda(t)E(u(t)).\nonumber 
\end{align}

(\ref{eq:scaling energy}) implies we still obtain the scaling invariant
property for the energy conservation law although $V(x)$ does not
own it. 

(ii) The scaling for the potential function $V(x)$.

A direct computation implies 
\begin{equation}
|\sum x_{j}\partial_{x_{j}}V_{\lambda}(x)|=\lambda^{3}|\sum\lambda x_{j}\partial_{\lambda x_{j}}V(\lambda x)|\leq\lambda^{3}\cdotp\frac{C}{|\lambda x|^{3}}=\frac{C}{|x|^{3}},\label{eq:scaling potential}
\end{equation}

under the assumption (\ref{eq:technisch reason}) which is a universal
upper bound independent of $\lambda$. Besides, we only use $L^{1}$
norm for the potential function $V(x)$ which is also a scaling invariant
norm under the renormalization, i.e. $||V_{\lambda}(x)||_{L^{1}}=||V(x)||_{L^{1}}$. 

The paper is organized as follows. In section 2, we will prove Theorem
\ref{thm 2 of ours}. More specifically, we will give a uniform control
of the $\rho$ norm, which is a suitable scaling invariant Morrey-Campanato
norm, and a lower bound on a weight local $L^{2}$ norm, then we use
a blackbox by applying Proposition \ref{prop 1 of ours} and \ref{prop 2 of ours}
to finish the proof for Theorem \ref{thm 2 of ours}. In section 3,
we supply the connection between Schr\"odinger equation and Hartree
equation. In Appendix A, for readers' convenience, we give some examples
of blow up solutions and some direct observations for the blow up
rate for $V(x)$ with higher integrability conditions. . However,
we can not give any special examples $V(x)$ which lead to this kind
of blow up solutions. Appendix B is devoted to give some standard
result of stability theory needed in Section 3. 

\section{BLOW-UP RATE FOR THE BLOW-UP SOLUTION}

In this section, we focus on the proof of Theorem \ref{thm 2 of ours},
i.e. we consider the equation
\begin{equation}
\begin{cases}
i\partial_{t}u+\triangle u=-(V*|u|^{2})u. & (t,x)\in\mathbb{R}\times\mathbb{R}^{3},\\
u\mid_{t=0}=u_{0}\in\dot{H}^{\frac{1}{2}}\cap\dot{H}^{1}.
\end{cases}\label{eq:Hartree-4}
\end{equation}

with $V(x)$ satisfies the conditions in Theorem \ref{thm 2 of ours}.
We will give a lower bound for the blow-up rate.

First, we prove the main propositions at the heart of the proof of
Theorem \ref{thm 2 of ours}. Let $u_{0}\in\dot{H}^{\frac{1}{2}}\cap\dot{H}^{1}$
with radial symmetry and assume that the corresponding solution $u(t)$
to (\ref{eq:Hartree-4}) blows up in finite time $0<T<+\infty$. We
can pick $t$ close enough to $T$. Let 
\begin{equation}
\lambda_{u}(t)=(\frac{1}{||\nabla u(t)||_{L^{2}}})^{2},\label{eq:scaling-2}
\end{equation}

then from the local theory,
\begin{equation}
\lambda_{u}(t)\lesssim\sqrt{T-t}.\label{eq:scaling-3}
\end{equation}

We define the renormalization of $u(t)$ by 
\begin{equation}
v^{(t)}(\tau,x)=\lambda_{u}(t)\bar{u}(t-\lambda_{u}^{2}(t)\tau,\lambda_{u}(t)x).\label{eq:renorm-2}
\end{equation}

In this subsection, to clarify the notations, we omit the dependence
of $\lambda$ on $u(t)$ and define $v^{\lambda}:=v^{(t)}(\tau,x)$.

Then it is not hard to check that $v^{\lambda}$ satisfies the following
equation 
\begin{equation}
\begin{cases}
i\partial_{\tau}v^{\lambda}+\triangle v^{\lambda}=-(V_{\lambda}*|v^{\lambda}|^{2})v^{\lambda}. & (\tau,x)\in[0,\frac{t}{\lambda^{2}})\times\mathbb{R}^{3},\;V_{\lambda}(x)=\lambda^{3}V(\lambda x),\\
v^{\lambda}\mid_{\tau=0}=\lambda\bar{u}(t,\lambda x)\in\dot{H}^{\frac{1}{2}}\cap\dot{H}^{1}.
\end{cases}\label{eq:scaling-3-1}
\end{equation}

Because of the no-scaling property of the potential function $V(x)$,
the renormalization $v^{\lambda}$ satisfies different equation at
different time $t$. So we need some universal properties for these
$v^{\lambda}$ which are independent of $\lambda$. 

We first give needed definitions in our following propositions and
recall some elementary inequalities.

Define a smooth radially symmetric cut-off function,
\begin{equation}
\psi(x)=\begin{cases}
\frac{|x|^{2}}{2}, & |x|\leq2,\\
0, & |x|\geq3,
\end{cases}\label{cut-off function}
\end{equation}

with $\psi_{R}(x):=R^{2}\psi(\frac{x}{R})$ and we state the following
lemma in \cite{Main reference of FM and PR}:
\begin{lem}
\label{sobolev lemma 1}(Radial Gagliardo-Nirenberg inequality)

(i) There exists a universal constant $C>0$ such that for all $u\in L^{3}$,
\begin{equation}
\forall R>0,\;\frac{1}{R}\int_{|y|\leq R}|u|^{2}dy\leq C||u||_{L^{3}}^{2}.\label{eq:sobolev-1}
\end{equation}

(ii) For all $\eta>0,$ there exists a constant $C_{\eta}>0$ such
that for all $u\in\dot{H}^{\frac{1}{2}}\cap\dot{H}^{1}$ with radial
symmetry, for all $R>0,$
\begin{equation}
\int_{|x|\geq R}|u|^{4}\leq\eta||\nabla u||_{L^{2}(|x|\geq R)}^{2}+\frac{C_{\eta}}{R}[\rho(u,R)^{2}+\rho(u,R)^{3}].\label{eq:sobolev-2}
\end{equation}
\end{lem}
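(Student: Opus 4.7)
Part (i) is immediate from H\"older's inequality on $B_{R}$: $\int_{|y|\leq R}|u|^{2}\,dy\leq|B_{R}|^{1/3}\|u\|_{L^{3}}^{2}=(4\pi/3)^{1/3}R\,\|u\|_{L^{3}}^{2}$, and dividing by $R$ gives the stated bound with $C=(4\pi/3)^{1/3}$.

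For (ii) the plan is a dyadic exterior decomposition combined with a radial Strauss-type pointwise bound and a Young absorption at the end. Write $\{|x|\geq R\}=\bigcup_{j\geq 0}A_{j}$ with $A_{j}=\{R_{j}\leq|x|\leq 2R_{j}\}$, $R_{j}=2^{j}R$. On each annulus I use the trivial split $\int_{A_{j}}|u|^{4}\leq\|u\|_{L^{\infty}(A_{j})}^{2}\int_{A_{j}}|u|^{2}$ together with $\int_{A_{j}}|u|^{2}\leq\|u\|_{L^{\infty}(A_{j})}\int_{A_{j}}|u|\leq R_{j}\,\rho(u,R)\,\|u\|_{L^{\infty}(A_{j})}$, where the last inequality uses the definition of $\rho$ and its monotonicity $\rho(u,R_{j})\leq\rho(u,R)$. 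The problem is thereby reduced to an $L^{\infty}$ estimate on each annulus.

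The core step is a radial pointwise estimate. Starting from the identity
\begin{equation*}
r|u(r)|^{2}=2r|u(2r)|^{2}-\int_{r}^{2r}\bigl[|u(s)|^{2}+2s\,\mathrm{Re}(\bar u u')(s)\bigr]\,ds,
\end{equation*}
obtained by integrating $\tfrac{d}{ds}(s|u|^{2})=|u|^{2}+2s\,\mathrm{Re}(\bar u u')$, and iterating toward $+\infty$ (the boundary term vanishes because radial $u\in\dot H^{1}$ satisfies $|u(s)|\lesssim s^{-1}$ by the classical radial Sobolev lemma), I bound the $|u|^{2}$ contribution by the Morrey input $\int_{s}^{2s}|u|\,d\sigma\leq C\rho(u,R)/s$ (a direct consequence of the definition of $\rho$ for radial $u$ once $s\geq R$), namely $\int_{s}^{2s}|u|^{2}\,d\sigma\leq \|u\|_{L^{\infty}(s,2s)}\cdot C\rho(u,R)/s$, and the cross term by Cauchy--Schwarz,
$\int_{s}^{2s}\sigma|u||u'|\,d\sigma\leq\bigl(\|u\|_{L^{\infty}(s,2s)}\rho(u,R)/s\bigr)^{1/2}\|\nabla u\|_{L^{2}(A_{s})}$.
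Summing the resulting pieces over the dyadic tail and absorbing the $\|u\|_{L^{\infty}}$ factor that appears on the right-hand side into the left by Young's inequality produces, at the level of the annulus $A_{j}$, a closed bound of the schematic form
\begin{equation*}
R_{j}\,\|u\|_{L^{\infty}(A_{j})}^{2}\lesssim\rho(u,R)^{2}+\rho(u,R)\,\|\nabla u\|_{L^{2}(|x|\geq R_{j})}.
\end{equation*}

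Plugging this back into $\int_{A_{j}}|u|^{4}\leq\|u\|_{L^{\infty}(A_{j})}^{3}\cdot R_{j}\rho(u,R)$, summing the geometric series $\sum_{j\geq 0}1/R_{j}\leq 2/R$, and applying Young's inequality $ab\leq\eta a^{2}+C_{\eta}b^{2}$ to the mixed contribution $\rho(u,R)\cdot\|\nabla u\|_{L^{2}}$ to split it into $\eta\|\nabla u\|_{L^{2}}^{2}+C_{\eta}\rho^{2}$, one arrives at the announced inequality $\int_{|x|\geq R}|u|^{4}\leq\eta\|\nabla u\|_{L^{2}(|x|\geq R)}^{2}+\tfrac{C_{\eta}}{R}\bigl(\rho(u,R)^{2}+\rho(u,R)^{3}\bigr)$. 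The main obstacle is producing the radial Strauss-type bound using only the Morrey quantity $\rho$, because the customary $L^{2}$ norm of $u$ is unavailable ($u\in\dot H^{1/2}\cap\dot H^{1}$ only); the iteration and the Young absorption have to be arranged with care so that the final polynomial in $\rho$ contains only the powers $2$ and $3$ stated in the lemma, with an arbitrarily small coefficient $\eta$ on the $\|\nabla u\|_{L^{2}}^{2}$ term.
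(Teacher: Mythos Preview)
The paper does not supply its own argument for this lemma; it is quoted from Merle--Rapha\"el (the reference labelled \texttt{Main reference of FM and PR}), so there is no in-paper proof to compare against. Your sketch follows exactly the standard strategy of that reference: dyadic decomposition of $\{|x|\geq R\}$, a radial Strauss-type pointwise bound in which the usual $L^{2}$ mass is replaced by the Morrey quantity $\rho$, then Young's inequality to separate the gradient and $\rho$ contributions.

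One small point to repair: the assertion ``radial $u\in\dot H^{1}$ satisfies $|u(s)|\lesssim s^{-1}$'' is the \emph{inhomogeneous} radial Sobolev lemma and uses the full $H^{1}$ norm, which you do not have. From $\dot H^{1}$ alone one gets only
\[
|u(r)|\leq\int_{r}^{\infty}|u'(\sigma)|\,d\sigma\leq\Bigl(\int_{r}^{\infty}\sigma^{2}|u'|^{2}\,d\sigma\Bigr)^{1/2}\Bigl(\int_{r}^{\infty}\sigma^{-2}\,d\sigma\Bigr)^{1/2}\lesssim r^{-1/2}\|\nabla u\|_{L^{2}(|x|\geq r)},
\]
hence $|u(s)|=o(s^{-1/2})$ as $s\to\infty$; this is still enough for the boundary term $s|u(s)|^{2}\to 0$ in your iterated identity, so the conclusion survives. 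Also note that the paper's displayed definition of $\rho$ carries a typo: consistently with part~(i), with every use of $\rho$ later in Section~2, and with Merle--Rapha\"el, the integrand should be $|u|^{2}$, not $|u|$. With that reading your intermediate step becomes simply $\int_{A_{j}}|u|^{2}\leq R_{j}\,\rho(u,R)$ (no extra $L^{\infty}$ factor needed), and the rest of your outline closes exactly as in the original reference.
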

And from the definition of energy, we know 
\begin{align}
E^{\lambda}(v^{\lambda}) & :=\frac{1}{2}\int|\nabla v^{\lambda}|^{2}dx-\frac{1}{4}\int(V_{\lambda}*|v^{\lambda}|^{2})|v^{\lambda}|^{2}dx\label{eq:sobolev-3}\\
 & \geq\frac{1}{2}(1-(\frac{||v^{\lambda}||_{L^{3}}}{c_{V}})^{2})\int|\nabla v^{\lambda}|^{2}dx,\nonumber 
\end{align}

for some $c_{V}>0$ which is independent of $\lambda$. 

With these prepared knowledge, we prove the following,
\begin{prop}
\label{prop 1 of ours}Let $\tau_{*}>0$ and $v^{\lambda}(\tau)\in C([0,\tau_{*}],\dot{H}^{\frac{1}{2}}\cap\dot{H}^{1})$
be a radially symmetric solution to (\ref{eq:scaling-3-1}) and assume
\begin{equation}
\tau_{*}^{\frac{1}{2}}max(E^{\lambda}(v_{0}^{\lambda}),0)<1,\label{scaling energy-3}
\end{equation}

and 
\begin{equation}
M_{0}^{\lambda}:=\frac{4||v_{0}^{\lambda}||_{L^{3}}}{c_{V}}\geq2,\label{scaling initial data-3}
\end{equation}

then there exist $C_{1},\alpha_{1},\alpha_{2}>0$ which are independent
of $\lambda$ such that 
\begin{equation}
\rho(v^{\lambda}(\tau_{*}),(M_{0}^{\lambda})^{\alpha_{1}}\sqrt{\tau_{*}})\leq C_{1}(M_{0}^{\lambda})^{2},\label{upper M-C norm}
\end{equation}

and 
\begin{equation}
\int_{0}^{\tau_{*}}(\tau_{*}-\tau)||\nabla v^{\lambda}(\tau)||_{L_{x}^{2}}^{2}d\tau\leq(M_{0}^{\lambda})^{\alpha_{2}}\tau_{*}^{\frac{3}{2}}.\label{upper H1 norm}
\end{equation}
\end{prop}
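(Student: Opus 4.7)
The plan is to adapt the bootstrap argument of Merle and Rapha\"el to the Hartree setting, running a continuity argument for the Morrey-Campanato norm $\rho(v^{\lambda}(\tau),R)$ driven by a localized virial identity, while tracking the two new features: the nonlocal nature of the Hartree nonlinearity, and the need for every constant to be uniform in $\lambda$.

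The engine of the argument is a localized virial at radius $R \sim (M_0^{\lambda})^{\alpha_1}\sqrt{\tau_*}$ with the cut-off $\psi_R$ from \eqref{cut-off function}. Setting $F_R(\tau) = \int \psi_R\,|v^{\lambda}|^2\,dx$ and differentiating twice via \eqref{eq:scaling-3-1}, one arrives at an identity of the form
\begin{equation*}
F_R''(\tau) = 8\!\int |\nabla v^{\lambda}|^2\,dx + 4\!\int x_j |v^{\lambda}|^2 \partial_j (V_{\lambda} \ast |v^{\lambda}|^2)\,dx + \mathrm{Err}_1(R) + \mathrm{Err}_2(R),
\end{equation*}
where $\mathrm{Err}_1$ collects the usual localization errors from replacing $|x|^2$ by $\psi_R$ (supported in $\{|x|\geq R\}$), and $\mathrm{Err}_2$ collects the additional nonlocal errors produced because $\psi_R$ does not commute with convolution by $V_{\lambda}$. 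For the main Hartree term, I would exploit the radial symmetry of $V_{\lambda}$ to symmetrize in $x \leftrightarrow y$ and write
\begin{equation*}
\int x_j |v^{\lambda}|^2 \partial_j (V_{\lambda} \ast |v^{\lambda}|^2)\,dx = \tfrac{1}{2}\!\int\!\!\!\int \bigl((x-y) \cdot \nabla V_{\lambda}(x-y)\bigr)|v^{\lambda}(x)|^2 |v^{\lambda}(y)|^2\,dx\,dy.
\end{equation*}
The hypothesis \eqref{connection condition} (preserved under rescaling, so that $V_{\lambda}$ also satisfies it) then upper-bounds this by $-\tfrac{\alpha}{2}\int (V_{\lambda}\ast|v^{\lambda}|^2)|v^{\lambda}|^2\,dx$, and using conservation of energy \eqref{eq:scaling energy} to exchange this nonlocal integral for $\int |\nabla v^{\lambda}|^2$ and $E^{\lambda}$ yields, since $\alpha > 2$, the virial inequality
\begin{equation*}
F_R''(\tau) \leq -(4\alpha - 8)\!\int |\nabla v^{\lambda}|^2\,dx + 8\alpha E^{\lambda} + \mathrm{Err}_1 + \mathrm{Err}_2.
\end{equation*}

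The localization error $\mathrm{Err}_1$ is controlled exactly as in \cite{Main reference of FM and PR}, by combining the radial Gagliardo-Nirenberg inequality \eqref{eq:sobolev-2} on $\{|x|\geq R\}$ with the bootstrap hypothesis on $\rho$, so that it is absorbed into a fraction of the kinetic term. For the genuinely nonlocal error $\mathrm{Err}_2$, I would use the pointwise bound \eqref{eq:technisch reason} together with its scale invariance \eqref{eq:scaling potential}, and the integrability hypothesis \eqref{the integrability condition} (with $\|V_{\lambda}\|_{L^1} = \|V\|_{L^1}$), to estimate the tail contributions by quantities controlled by $\rho(v^{\lambda},R)$, again absorbable under the bootstrap. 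Integrating $F_R''$ twice against $(\tau_*-\tau)$, using $F_R \geq 0$, the hypothesis $\tau_*^{1/2}\max(E^{\lambda}(v_0^{\lambda}),0) < 1$, and initial bounds on $F_R(0)$ and $F_R'(0)$ obtained from \eqref{eq:sobolev-1} and Cauchy-Schwarz, then produces the dispersive estimate \eqref{upper H1 norm} for a sufficiently large universal $\alpha_2$. Feeding this bound back into a family of shifted localized virials at annuli $R' \geq R$ and closing the continuity argument yields the Morrey-Campanato estimate \eqref{upper M-C norm}, upon fixing $\alpha_1$ large enough. The main obstacle is precisely the control of $\mathrm{Err}_2$: it is here that the technical assumption \eqref{eq:technisch reason} becomes essential, and each estimate must be verified to remain uniform as $\lambda$ varies, which is the purpose of the scale invariance provided by \eqref{eq:scaling potential} and by $\|V_{\lambda}\|_{L^1} = \|V\|_{L^1}$.
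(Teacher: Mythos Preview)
Your overall strategy is correct, but there are two genuine gaps where the argument as written will not close.

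First, the control of the nonlocal error $\mathrm{Err}_2$ is more delicate than you indicate. The dangerous contribution comes from the cross region $\{|x|\leq R,\ |y|\geq 2R\}$ (and its symmetric counterpart), where after invoking \eqref{eq:technisch reason} one is left with $\frac{1}{R^2}\,\rho(v^\lambda,R)\int_{|x|\leq R}|v^\lambda|^2\,dx$. Bounding the inner integral by $R^2\|\nabla v^\lambda\|_{L^2}^2$ yields a term $\rho(v^\lambda,R)\,\|\nabla v^\lambda\|_{L^2}^2$, and under the bootstrap hypothesis $\rho\lesssim (M_0^\lambda)^2$ this is \emph{not} absorbable into the kinetic term, since $(M_0^\lambda)^2\geq 4$. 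The paper resolves this by introducing a second, smaller radius $R_1=\frac{\delta}{M_0^\lambda}R$ and splitting $\{|x|\leq R\}=\{|x|\leq R_1\}\cup\{R_1\leq|x|\leq R\}$: the near piece contributes $\frac{R_1^2}{R^2}\rho(v^\lambda,R)\|\nabla v^\lambda\|_{L^2}^2$, now absorbable, while the annular piece contributes $R^{-1}\rho(v^\lambda,R_1)\rho(v^\lambda,R)$, which forces a \emph{second} bootstrap hypothesis on $\rho$ at scale $R_1$. Your proposal lacks this two-scale structure entirely.

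Second, and more seriously, bounding $F_R'(0)$ by ``Cauchy--Schwarz'' does not close. Cauchy--Schwarz gives $|F_R'(0)|\lesssim R^{3/2}\|\nabla v_0^\lambda\|_{L^2}M_0^\lambda$, but the proposition assumes no a priori control on $\|\nabla v_0^\lambda\|_{L^2}$; and even granting such control, after multiplying by $\tau_0$ from the double time-integration and substituting $R=A\sqrt{\tau_0}$, one obtains a term of order $\tau_0^{7/4}$, which is too large compared with the $\tau_0^{3/2}$ required for \eqref{upper H1 norm}. The paper devotes its entire Step~4 to this issue: one first uses the bootstrap hypothesis on the dispersive integral to locate a time $\tilde\tau_0\in[\tfrac{\varepsilon^{2/3}}{4}\tau_0,\tfrac{\varepsilon^{2/3}}{2}\tau_0]$ at which $\|\nabla v^\lambda(\tilde\tau_0)\|_{L^2}^2\lesssim G_\varepsilon/\tilde\tau_0^{1/2}$, estimates the localized momentum at $\tilde\tau_0$ using this decay together with a weighted $L^2$ bound at $\tilde\tau_0$, and then integrates \eqref{local virial identity} backward from $\tilde\tau_0$ to $0$ to transfer the bound to the initial time, finally combining with the energy term via \eqref{scaling energy-3}. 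This momentum estimate is the crux of the Merle--Rapha\"el scheme and cannot be replaced by a direct Cauchy--Schwarz at $\tau=0$.
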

\begin{proof}
Step 1. Local radial virial estimate.

We define 
\begin{equation}
V_{a}(\tau):=\int_{R^{3}}a(x)|v^{\lambda}(\tau,x)|^{2}dx,\label{weight L2}
\end{equation}

and 
\begin{equation}
P_{a}(\tau):=\frac{d}{d\tau}V_{a}(\tau)=2Im\int_{R^{3}}\nabla a(x)\nabla v^{\lambda}(\tau,x)\bar{v^{\lambda}}(\tau,x)dx.\label{virial identity-1}
\end{equation}

The a direct computation gives 
\begin{align*}
\frac{1}{4}\frac{d}{d\tau}P_{a}(\tau) & \leq\alpha E^{\lambda}(v^{\lambda}(\tau))-(\frac{\alpha}{2}-1)\int|\nabla v^{\lambda}|^{2}dx+\frac{C}{R^{2}}\int_{2R\leq|x|\leq3R}|v^{\lambda}|^{2}dx\\
 & +\frac{1}{4}\int\int[(\partial_{x_{j}}\psi_{R}(x)-\partial_{y_{j}}\psi_{R}(y))-(x_{j}-y_{j})]\partial_{x_{j}}V_{\lambda}(x-y)|v^{\lambda}(\tau,y)|^{2}|v^{\lambda}(\tau,x)|^{2}dydx\\
 & =\alpha E^{\lambda}(v^{\lambda}(\tau))-(\frac{\alpha}{2}-1)\int|\nabla v^{\lambda}|^{2}dx+\frac{C}{R^{2}}\int_{2R\leq|x|\leq3R}|v^{\lambda}|^{2}dx+\Phi(\tau),
\end{align*}

where we substitute $a(x)$ with $\psi_{R}(x)$ and use the condition
(\ref{connection condition}) with 
\begin{align*}
|\Phi(\tau)| & =\frac{1}{4}|\int\int[(\partial_{x_{j}}\psi_{R}(x)-\partial_{y_{j}}\psi_{R}(y))-(x_{j}-y_{j})]\partial_{x_{j}}V_{\lambda}(x-y)|v^{\lambda}(\tau,y)|^{2}|v^{\lambda}(\tau,x)|^{2}dydx|\\
 & \lesssim\iint_{|x|\geq R,|y|\geq R}|[(\partial_{x_{j}}\psi_{R}(x)-\partial_{y_{j}}\psi_{R}(y))-(x_{j}-y_{j})]\partial_{x_{j}}V_{\lambda}(x-y)||v^{\lambda}(\tau,y)|^{2}|v^{\lambda}(\tau,x)|^{2}dxdy\\
 & +\iint_{|x|\leq R,|y|\geq2R}|[(\partial_{x_{j}}\psi_{R}(x)-\partial_{y_{j}}\psi_{R}(y))-(x_{j}-y_{j})]\partial_{x_{j}}V_{\lambda}(x-y)||v^{\lambda}(\tau,y)|^{2}|v^{\lambda}(\tau,x)|^{2}dxdy\\
 & +\iint_{|y|\leq R,|x|\geq2R}|[(\partial_{x_{j}}\psi_{R}(x)-\partial_{y_{j}}\psi_{R}(y))-(x_{j}-y_{j})]\partial_{x_{j}}V_{\lambda}(x-y)||v^{\lambda}(\tau,y)|^{2}|v^{\lambda}(\tau,x)|^{2}dxdy\\
 & :=(I)+(II)+(III).
\end{align*}

We should point out,\textcolor{red}{{} }the term $\Phi(\tau)$ is different
from the error term in Schr\"odinger-equation setting and this is
the reason to assume the additional condition (\ref{eq:technisch reason}).
We summarize the estimate of $\Phi(\tau)$ as follows, 
\begin{lem}
\label{sobolev lemma 2}Assume (\ref{the integrability condition}),
(\ref{eq:technisch reason}) in Theorem \ref{thm 2 of ours} hold,
then $\forall\eta>0,\;0<R_{1}<R,$ there exists $C(\eta)>0$ such
that 
\begin{align}
|\Phi(\tau)| & \leq\eta||\nabla v^{\lambda}(\tau)||_{L_{x}^{2}}^{2}+C(\eta)R^{-1}[\rho(v^{\lambda}(\tau),R)+\rho(v^{\lambda}(\tau),R)^{3}+\rho(v^{\lambda}(\tau),R_{1})\rho(v^{\lambda}(\tau),R)]\label{additional error term}\\
 & +||\nabla v^{\lambda}||_{L^{2}}^{2}\cdotp\frac{R_{1}^{2}}{R^{2}}\cdotp\rho(v^{\lambda}(\tau),R).\nonumber 
\end{align}
\end{lem}
If we assume Lemma \ref{sobolev lemma 2} holds, then we conclude
\begin{align}
\frac{1}{4}\frac{d}{dt}P_{a}(\tau) & \leq\alpha E^{\lambda}(v^{\lambda}(\tau))-\frac{1}{2}(\frac{\alpha}{2}-1)\int|\nabla v^{\lambda}|^{2}dx\label{local virial identity}\\
 & +C(\eta)R^{-1}[\rho(v^{\lambda}(\tau),R)+\rho(v^{\lambda}(\tau),R)^{3}+\rho(v^{\lambda}(\tau),R_{1})\rho(v^{\lambda}(\tau),R)]+||\nabla v^{\lambda}||_{L^{2}}^{2}\cdotp\frac{R_{1}^{2}}{R^{2}}\cdotp\rho(v^{\lambda}(\tau),R).\nonumber 
\end{align}

Now we return to the proof of Lemma \ref{sobolev lemma 2}, 
\begin{proof}
For $(I)$, 
\begin{align}
(I) & \leq||x\cdotp\nabla_{x}V(\lambda x)||_{L^{1}}||v^{\lambda}(\tau,x)\cdotp\chi_{\{|x|\geq R\}}||_{L^{4}}^{2}||v^{\lambda}(\tau,y)\cdotp\chi_{\{|y|\geq R\}}||_{L^{4}}^{2}\label{term I}\\
 & \leq||v^{\lambda}(\tau,x)\cdotp\chi_{\{|x|\geq R\}}||_{L^{4}}^{4}\nonumber \\
 & \lesssim\eta||\nabla v^{\lambda}||_{L^{2}}^{2}+\frac{C_{\eta}}{R}[\rho(v^{\lambda},R)^{2}+\rho(v^{\lambda},R)^{3}]\nonumber 
\end{align}

by Lemma \ref{sobolev lemma 1}.

Since $(II)$ and $(III)$ are symmetric, we only estimate $(II)$.
In fact, this term does not appear in the Schr\"odinger equation.
We should divide the area $\{|x|\leq R,|y|\geq2R\}$ into $\{|x|\leq R_{1},|y|\geq2R\}$
and $\{R_{1}\leq|x|\leq R,|y|\geq2R\}$ and choose $R_{1}$ carefully
in next step. The term in the area $\{|x|\leq R_{1},|y|\geq2R\}$
can be controlled by $\frac{R_{1}^{2}}{R^{2}}||\nabla v^{\lambda}||_{L^{2}}^{2}\rho(v^{\lambda}(\tau),R)$
and the other is controlled by $\frac{1}{R}\rho(v^{\lambda}(\tau),R_{1})\rho(v^{\lambda}(\tau),R)$.
The details are the followings.

In the area $\{|x|\leq R,|y|\geq2R\},$ using the property (\ref{eq:technisch reason}),
we conclude 
\begin{equation}
|[(\partial_{x_{j}}\psi_{R}(x)-\partial_{y_{j}}\psi_{R}(y))-(x_{j}-y_{j})]\partial_{x_{j}}V_{\lambda}(x-y)|\lesssim\lambda^{3}\cdotp\frac{1}{(\lambda|x-y|)^{3}}=\frac{1}{|x-y|^{3}}\lesssim\frac{1}{|y|^{3}},
\end{equation}

which leads to 
\begin{align}
(II) & \lesssim|\iint_{|x|\leq R,|y|\geq2R}\frac{1}{|y|^{3}}|v^{\lambda}(\tau,y)|^{2}|v^{\lambda}(\tau,x)|^{2}dxdy|\label{term II}\\
 & \lesssim\int_{|x|\leq R}|v^{\lambda}(\tau,x)|^{2}dx[\sum_{j=0}^{+\infty}\frac{1}{(2^{j}R)^{3}}\int_{2^{j}R\leq|y|\leq2^{j+1}R}|v^{\lambda}(\tau,y)|^{2}dy]\nonumber \\
 & \lesssim[\int_{|x|\leq R_{1}}|v^{\lambda}(\tau,x)|^{2}dx+\int_{R_{1}\leq|x|\leq R}|v^{\lambda}(\tau,x)|^{2}dx]\cdotp\frac{1}{R^{2}}\rho(v^{\lambda}(\tau),R)\nonumber \\
 & \lesssim[R_{1}^{2}||\nabla v^{\lambda}||_{L^{2}}^{2}+R\rho(v^{\lambda}(\tau),R_{1})]\cdotp\frac{1}{R^{2}}\cdotp\rho(v^{\lambda}(\tau),R).\nonumber 
\end{align}

From (\ref{term I}) and (\ref{term II}), we finish the proof of
Lemma \ref{sobolev lemma 2}.
\end{proof}
Step 2. A priori control of the $\rho$ norm on parabolic space time
interval. 

In this step, we need to choose $R_{1}$ appropriately. On the one
hand, we should choose $R_{1}$ small enough so that $\frac{R_{1}^{2}}{R^{2}}\cdotp\rho(v^{\lambda}(\tau),R)\ll\frac{1}{2}$.
On the other hand, we can not choose $R_{1}$ too small otherwise
we can not control $R^{-1}\rho(v^{\lambda}(\tau),R_{1})\rho(v^{\lambda}(\tau),R)$.
However, the above analysis is not self-contradictory. A natural idea
is, we choose $R_{1}$ with $\frac{R_{1}^{2}}{R^{2}}\lesssim\varepsilon^{c_{1}}$
such that $\frac{R_{1}^{2}}{R^{2}}\cdotp\rho(v^{\lambda}(\tau),R)\ll\frac{1}{2}$.
At the same time, let $R\geq R_{1}\gg(M_{0}^{\lambda})^{\frac{1}{2\varepsilon}}$
so that $R^{-1}\rho(v^{\lambda}(\tau),R_{1})\rho(v^{\lambda}(\tau),R)$
still can be treated as an error term. Once this difficulty is overcome,
the rest of the analysis is similar to the one in \cite{Main reference of FM and PR}.

Let $\varepsilon,\delta>0$ be a small enough constant to be chosen
later. Let 
\begin{equation}
G_{\varepsilon}=(M_{0}^{\lambda})^{\frac{1}{\varepsilon}},\;A_{\varepsilon_{1}}=(\frac{\varepsilon G_{\varepsilon}}{(M_{0}^{\lambda})^{2}})^{\frac{1}{3}},\;A_{\varepsilon_{2}}=(\frac{\varepsilon G_{\varepsilon}}{(M_{0}^{\lambda})^{2}})^{\frac{1}{3}}\cdotp(\frac{\delta}{M_{0}^{\lambda}}).\label{chosen parameter}
\end{equation}

Recall from Lemma \ref{sobolev lemma 1}, there exists a universal
constant $C>0$ such that 
\[
\forall R>0,\;\forall u\in L^{3},\;\rho(u,R)\leq C||u||_{L^{3}}^{2}.
\]

From the regularity of the flow $v^{\lambda}\in C([0,\tau^{*}],\dot{H}^{\frac{1}{2}}\cap\dot{H}^{1})$
and the definition of $M_{0}^{\lambda}$, we may consider the largest
time $\tau_{1}\in[0,\tau_{*}]$ such that 
\begin{equation}
\forall\tau_{0}\in[0,\tau_{1}],\;[M_{1}^{\lambda}(A_{\varepsilon_{1}},\tau_{1})]^{2}=\max_{\tau\in[0,\tau_{1}]}\rho(v^{\lambda}(\tau),A_{\varepsilon_{1}}\sqrt{\tau})\leq2\frac{(M_{0}^{\lambda})^{2}}{\varepsilon},\label{upper M-C norm-1}
\end{equation}

\begin{equation}
[M_{2}^{\lambda}(A_{\varepsilon_{2}},\tau_{1})]^{2}=\max_{\tau\in[0,\tau_{1}]}\rho(v^{\lambda}(\tau),A_{\varepsilon_{2}}\sqrt{\tau})\leq2\frac{(M_{0}^{\lambda})^{5}}{\varepsilon^{10}}\label{upper M-C norm-2}
\end{equation}

and 
\begin{equation}
\int_{0}^{\tau_{0}}(\tau_{0}-\tau)||\nabla v^{\lambda}(\tau)||_{L^{2}}^{2}d\tau\leq G_{\varepsilon}\tau_{0}^{\frac{3}{2}}.\label{upper H1 norm-1}
\end{equation}

We claim that : 
\begin{equation}
\forall\tau_{0}\in[0,\tau_{1}],\;[M_{1}^{\lambda}(A_{\varepsilon_{1}},\tau_{1})]^{2}=\max_{\tau\in[0,\tau_{1}]}\rho(v^{\lambda}(\tau),A_{\varepsilon_{1}}\sqrt{\tau})\leq\frac{(M_{0}^{\lambda})^{2}}{\varepsilon},\label{upper M-C norm-1-1}
\end{equation}

\begin{equation}
[M_{2}^{\lambda}(A_{\varepsilon_{2}},\tau_{1})]^{2}=\max_{\tau\in[0,\tau_{1}]}\rho(v^{\lambda}(\tau),A_{\varepsilon_{2}}\sqrt{\tau})\leq\frac{(M_{0}^{\lambda})^{5}}{\varepsilon^{10}},\label{upper M-C norm-2-1}
\end{equation}

and 
\begin{equation}
\int_{0}^{\tau_{0}}(\tau_{0}-\tau)||\nabla v^{\lambda}(\tau)||_{L^{2}}^{2}d\tau\leq\frac{G_{\varepsilon}}{2}\tau_{0}^{\frac{3}{2}}\label{upper H1 norm-2}
\end{equation}

provided $\varepsilon,\delta>0$ has been chosen small enough, and
(\ref{upper M-C norm}), (\ref{upper H1 norm}) follow. 

Proof of (\ref{upper M-C norm-1-1}) and (\ref{upper M-C norm-2-1}).
$\forall\tau_{0}\in[0,\tau_{1}]$, we integrate twice in time between
$0$ and $\tau_{0}$ from (\ref{local virial identity}) and get :
\begin{align}
 & \int\psi_{R}|v^{\lambda}(\tau_{0})|^{2}+C'_{1}\int_{0}^{\tau_{0}}(\tau_{0}-\tau)||\nabla v^{\lambda}(\tau)||_{L^{2}}^{2}d\tau\label{local virial identity-1}\\
 & \lesssim\int\psi_{R}|v^{\lambda}(0)|^{2}+\tau_{0}[Im(\int\nabla\psi_{R}(x)\nabla v^{\lambda}(0)\bar{v^{\lambda}}(0)dx)+E(v_{0}^{\lambda})\tau_{0}]\nonumber \\
 & +\frac{1}{R}\int_{0}^{\tau_{0}}(\tau_{0}-\tau)[\rho(v^{\lambda}(\tau),R)+\rho(v^{\lambda}(\tau),R)^{3}+\rho(v^{\lambda}(\tau),R_{1})\rho(v^{\lambda}(\tau),R)]d\tau\nonumber \\
 & +\frac{R_{1}^{2}}{R^{2}}\int_{0}^{\tau_{0}}(\tau_{0}-\tau)||\nabla v^{\lambda}||_{L^{2}}^{2}\rho(v^{\lambda}(\tau),R)d\tau.\nonumber 
\end{align}

We let $R\geq A_{\varepsilon_{1}}\sqrt{\tau_{0}}$ for $\forall\tau_{0}\in[0,\tau_{1}]$
and choose $R_{1}$ as $R_{1}:=\frac{\delta}{M_{0}^{\lambda}}R$,
then a direct computation yields 
\begin{align}
 & \frac{R_{1}^{2}}{R^{2}}\int_{0}^{\tau_{0}}(\tau_{0}-\tau)||\nabla v^{\lambda}||_{L^{2}}^{2}\rho(v^{\lambda}(\tau),R)d\tau\label{additional error}\\
 & \leq2(\frac{\delta}{M_{0}^{\lambda}})^{2}\frac{(M_{0}^{\lambda})^{2}}{\varepsilon}\int_{0}^{\tau_{0}}(\tau_{0}-\tau)||\nabla v^{\lambda}||_{L^{2}}^{2}d\tau\nonumber \\
 & \leq2\frac{\delta^{2}}{\varepsilon}\int_{0}^{\tau_{0}}(\tau_{0}-\tau)||\nabla v^{\lambda}||_{L^{2}}^{2}d\tau.\nonumber 
\end{align}

Now we can choose $\delta$ small enough such that $2\frac{\delta^{2}}{\varepsilon}\ll\frac{C_{1}'}{2}$.
For the sake of simplicity, if we define $\delta(\varepsilon)=\varepsilon$
and $\varepsilon$ small enough, substitute (\ref{additional error})
into (\ref{local virial identity-1}) and we get 
\begin{align}
 & \int\psi_{R}|v^{\lambda}(\tau_{0})|^{2}+\frac{C'_{1}}{2}\int_{0}^{\tau_{0}}(\tau_{0}-\tau)||\nabla v^{\lambda}(\tau)||_{L^{2}}^{2}d\tau\label{local virial identity-2}\\
 & \lesssim\int\psi_{R}|v^{\lambda}(0)|^{2}+\tau_{0}[Im(\int\nabla\psi_{R}(x)\nabla v^{\lambda}(0)\bar{v^{\lambda}}(0)dx)+E^{\lambda}(v_{0}^{\lambda})\tau_{0}]\nonumber \\
 & +\frac{1}{R}\int_{0}^{\tau_{0}}(\tau_{0}-\tau)[\rho(v^{\lambda}(\tau),R)+\rho(v^{\lambda}(\tau),R)^{3}+\rho(v^{\lambda}(\tau),R_{1})\rho(v^{\lambda}(\tau),R)]d\tau.\nonumber 
\end{align}

We directly estimate the right hand side of (\ref{local virial identity-2})
except the second term:
\begin{equation}
\int\psi_{R}|v^{\lambda}(\tau_{0})|^{2}\leq(\int(\psi_{R})^{3}dx)^{\frac{1}{3}}(\int|v_{0}^{\lambda}|^{3}dx)^{\frac{2}{3}}\lesssim R^{3}(M_{0}^{\lambda})^{2},\label{term 1}
\end{equation}

\begin{align}
 & \frac{1}{R}\int_{0}^{\tau_{0}}(\tau_{0}-\tau)[\rho(v^{\lambda}(\tau),R)+\rho(v^{\lambda}(\tau),R)^{3}+\rho(v^{\lambda}(\tau),R_{1})\rho(v^{\lambda}(\tau),R)]d\tau\label{term3,4,5}\\
 & \leq\frac{\tau_{0}^{2}}{R}(\frac{(M_{0}^{\lambda})^{6}}{\varepsilon^{3}}+\frac{(M_{0}^{\lambda})^{7}}{\varepsilon^{11}})\nonumber \\
 & \leq2\frac{\tau_{0}^{2}}{R}\frac{(M_{0}^{\lambda})^{7}}{\varepsilon^{11}}.\nonumber 
\end{align}

For the second term, we claim the key estimate: $\forall\tau_{0}\in[0,\tau_{1}],\;\forall A\geq A_{\varepsilon_{1}},$
let $R=A\sqrt{\tau_{0}}$ , then 
\begin{equation}
Im\int\nabla\psi_{R}\cdotp\nabla v^{\lambda}(0)\overline{v^{\lambda}(0)}+E^{\lambda}(v_{0}^{\lambda})\tau_{0}\leq C\frac{(M_{0}^{\lambda})^{2}A^{3}}{\varepsilon^{\frac{2}{3}}}\tau_{0}^{\frac{1}{2}}.\label{term 2}
\end{equation}

In step 2 and step 3, we assume (\ref{term 2}) holds and derive the
desired upper bounds (\ref{upper M-C norm-1-1}), (\ref{upper M-C norm-2-1})
and (\ref{upper H1 norm-2}). In step 4, we give the proof of (\ref{term 2})
and thus finish the whole proof. 

Since $R\geq A_{\varepsilon_{1}}\sqrt{\tau_{0}}$ and the definition,
$R_{1}=\frac{\varepsilon}{M_{0}^{\lambda}}R$, we deduce $R_{1}\geq A_{\varepsilon_{2}}\sqrt{\tau_{0}}$. 

We divide (\ref{local virial identity-2}) by $R^{3}$, and get 
\begin{equation}
\frac{1}{R}\int_{R\leq|x|\leq2R}|v^{\lambda}(\tau_{0})|^{2}dx\lesssim(M_{0}^{\lambda})^{2}+\frac{(M_{0}^{\lambda})^{2}}{\varepsilon^{\frac{2}{3}}}+\frac{(M_{0}^{\lambda})^{7}}{\varepsilon^{11}}\frac{1}{A_{\varepsilon_{1}}^{4}}\leq\frac{(M_{0}^{\lambda})^{2}}{\varepsilon},\label{M-C norm at R}
\end{equation}

by the definition of $G_{\varepsilon}$ and $A_{\varepsilon_{1}}$.

Similarly, we divide (\ref{local virial identity-2}) by $R_{1}^{3}$,
and get 
\begin{align}
\frac{1}{R_{1}}\int_{R_{1}\leq|x|\leq2R_{1}}|v^{\lambda}(\tau_{0})|^{2}dx & \leq(\frac{R}{R_{1}})^{3}(M_{0}^{\lambda})^{2}+\frac{(M_{0}^{\lambda})^{5}}{\varepsilon^{3+\frac{2}{3}}}+\frac{(M_{0}^{\lambda})^{7}}{\varepsilon^{11}}\frac{1}{A_{\varepsilon_{1}}A_{\varepsilon_{2}}^{3}}\label{M-C norm at R1}\\
 & \leq\frac{(M_{0}^{\lambda})^{5}}{\varepsilon^{3}}+\frac{(M_{0}^{\lambda})^{5}}{\varepsilon^{3+\frac{2}{3}}}+\frac{(M_{0}^{\lambda})^{7}}{\varepsilon^{11}}\frac{1}{A_{\varepsilon_{1}}A_{\varepsilon_{2}}^{3}}\nonumber \\
 & \leq\frac{(M_{0}^{\lambda})^{5}}{\varepsilon^{10}}.\nonumber 
\end{align}

By standard continuity argument, we conclude (\ref{upper M-C norm})
holds. We should note that the choice of $C_{1}$ and $\alpha_{1}$
is independent of $\lambda$ from (\ref{upper M-C norm-1}) and the
definition of $A_{\varepsilon_{1}}$. 

Step 3. Self similar decay of the gradient. 

We substitute (\ref{upper M-C norm-1-1}), (\ref{upper M-C norm-2-1})
and (\ref{term 2}) at $A=A_{\varepsilon_{1}}$ into (\ref{local virial identity-2}),
then we derive
\begin{align}
 & \int_{0}^{\tau_{0}}(\tau_{0}-\tau)||\nabla v^{\lambda}(\tau)||_{L^{2}}^{2}d\tau\\
 & \lesssim R^{3}(M_{0}^{\lambda})^{2}+C\frac{(M_{0}^{\lambda})^{2}A_{\varepsilon_{1}}^{3}}{\varepsilon^{\frac{2}{3}}}\tau_{0}^{\frac{3}{2}}+\frac{\tau_{0}^{2}}{R}\frac{(M_{0}^{\lambda})^{7}}{\varepsilon^{11}}\nonumber \\
 & \leq\frac{\tau_{0}^{\frac{3}{2}}(M_{0}^{\lambda})^{2}A_{\varepsilon_{1}}^{3}}{\varepsilon^{\frac{2}{3}}}\leq\frac{G_{\varepsilon}}{2}\tau_{0}^{\frac{3}{2}},\nonumber 
\end{align}

which finishes the proof of (\ref{upper H1 norm}). From (\ref{chosen parameter}),
we know the choice of $\alpha_{2}$ in (\ref{upper H1 norm}) is also
independent of $\lambda$. 

Step 4. Proof of the momentum estimate (\ref{term 2}).

We can not directly use the interpolation estimate,
\begin{equation}
|Im(\int\nabla\psi_{R}(x)\nabla v^{\lambda}(0)\bar{v^{\lambda}}(0)dx|\lesssim||v_{0}^{\lambda}||_{\dot{H}^{\frac{1}{2}}}^{2}R,
\end{equation}

since we only know the information about $||v_{0}^{\lambda}||_{L^{3}}$
rather than $||v_{0}^{\lambda}||_{\dot{H}^{\frac{1}{2}}}$. On the
other hand, if we use H\"older inequality to estimate 
\begin{align*}
|Im(\int\nabla\psi_{R}(x)\nabla v^{\lambda}(0)\bar{v^{\lambda}}(0)dx)| & \leq R^{\frac{3}{2}}||\nabla v_{0}^{\lambda}||_{L^{2}}(\frac{1}{R^{3}}\int\psi_{R}|v_{0}^{\lambda}|^{2})^{\frac{1}{2}}\\
 & \leq R^{\frac{3}{2}}||\nabla v_{0}^{\lambda}||_{L^{2}}M_{0}^{\lambda}.
\end{align*}

Although we have used the information of $||v_{0}^{\lambda}||_{L^{3}}$,
we can not take full advantage of the information from (\ref{upper H1 norm})
at the initial time $\tau=0$. It is because that we hope $||\nabla v^{\lambda}(\tilde{\tau})||_{L^{2}}$
has the asymptotic behaviour with 
\begin{equation}
||\nabla v^{\lambda}(\tilde{\tau})||_{L^{2}}\leq\frac{C}{\tilde{\tau}^{\frac{1}{4}}},\label{H1 decay}
\end{equation}

for some special large enough time $\tilde{\tau}$ from (\ref{upper H1 norm}).
With the above analysis, a natural idea is to choose a time $\tilde{\tau}$
carefully to obtain the decay estimate (\ref{H1 decay}) at $\tilde{\tau}$.
Then we use (\ref{virial identity-1}) to conclude the difference
between $Im(\int\nabla\psi_{R}(x)\nabla v^{\lambda}(0)\bar{v^{\lambda}}(0)dx$
and $Im(\int\nabla\psi_{R}(x)\nabla v^{\lambda}(\tilde{\tau})\bar{v^{\lambda}}(\tilde{\tau})dx$
is suitably small, which helps us to finish the proof (\ref{term 2}).
In order to explain the above content more clearly, we have divided
the proof into three parts.

Part I: Choose a suitable time to obtain the decay estimate (\ref{H1 decay}).

Let $\tau_{0}\in[0,\tau_{1}]\;A\geq A_{\varepsilon_{1}}$ and $R=A\sqrt{\tau_{0}}$.
First, we should choose the proper $\tilde{\tau}$ as above. To be
more specific, we claim the following fact: there exists a universal
constant $K>0$ which is independent of $\lambda$ and $\tilde{\tau}_{0}$
such that 
\begin{equation}
\tilde{\tau}_{0}\in[\frac{\varepsilon^{\frac{2}{3}}}{4}\tau_{0},\frac{\varepsilon^{\frac{2}{3}}}{2}\tau_{0}]\;with\;||\nabla v^{\lambda}(\tilde{\tau}_{0})||_{L^{2}}^{2}\leq\frac{KG_{\varepsilon}}{\tilde{\tau}_{0}^{\frac{1}{2}}}.\label{special time H1 decay}
\end{equation}

Proof of (\ref{special time H1 decay}). By contradiction, let $\tilde{\tau}=\varepsilon^{\frac{2}{3}}\tau_{0}$,
then
\begin{equation}
\int_{\frac{\tilde{\tau}}{4}}^{\frac{\tilde{\tau}}{2}}||\nabla v^{\lambda}(\sigma)||_{L^{2}}^{2}d\sigma\geq KG_{\varepsilon}\int_{\frac{\tilde{\tau}}{4}}^{\frac{\tilde{\tau}}{2}}\frac{d\sigma}{\sigma^{\frac{1}{2}}}\geq CKG_{\varepsilon}\tilde{\tau}^{\frac{1}{2}}.
\end{equation}

Moreover, $\tilde{\tau}=\varepsilon^{\frac{2}{3}}\tau_{0}\leq\tau_{0}\leq\tau_{1}$
and (\ref{upper H1 norm-1}) implies:
\begin{equation}
G_{\varepsilon}\tilde{\tau}^{\frac{3}{2}}\geq\int_{0}^{\tilde{\tau}}(\tilde{\tau}-\sigma)||\nabla v^{\lambda}(\sigma)||_{L^{2}}^{2}d\sigma\geq\frac{\tilde{\tau}}{2}\int_{\frac{\tilde{\tau}}{4}}^{\frac{\tilde{\tau}}{2}}||\nabla v^{\lambda}(\sigma)||_{L^{2}}^{2}d\sigma\geq CKG_{\varepsilon}\tilde{\tau}^{\frac{3}{2}}.\label{contradiction}
\end{equation}

From (\ref{contradiction}), we conclude a contradiction for $K>0$
large enough.

Part II: Derive the desired upper bound for the quantity $Im(\int\nabla\psi_{R}(x)\nabla v^{\lambda}(\tilde{\tau}_{0})\bar{v^{\lambda}}(\tilde{\tau}_{0})dx$.

Define $R=A\sqrt{\tau_{0}}=A_{1}\sqrt{\tilde{\tau}_{0}}$ and thus
\begin{equation}
\frac{\varepsilon^{\frac{1}{3}}}{16}\leq\frac{A}{A_{1}}\leq\varepsilon^{\frac{1}{3}}.\label{choose A1}
\end{equation}

We claim:
\begin{equation}
|Im\int\nabla\psi_{R}\cdotp\nabla v^{\lambda}(\tilde{\tau}_{0})\overline{v^{\lambda}(\tilde{\tau}_{0})}|\leq C\frac{(M_{0}^{\lambda})^{2}A^{3}}{\varepsilon^{\frac{2}{3}}}\tau_{0}^{\frac{1}{2}}.\label{special time momentum}
\end{equation}

Proof of (\ref{special time momentum}). Before we derive (\ref{special time momentum}),
we show the following inequality
\begin{equation}
\frac{1}{R^{3}}\int\psi_{R}|v^{\lambda}(\tilde{\tau}_{0})|^{2}\leq C[(M_{0}^{\lambda})^{2}+\frac{G_{\varepsilon}}{A_{1}^{3}}].\label{weighted L2-2}
\end{equation}

In fact, from (\ref{virial identity-1}) and Cauchy-Schwarz inequality,
we obtain
\[
|\frac{d}{d\tau}\int\psi_{R}|v^{\lambda}|^{2}|=2|Im(\int\nabla\psi_{R}\cdotp\nabla v^{\lambda}\bar{v^{\lambda}})|\leq C||\nabla v^{\lambda}||_{L^{2}}(\int\psi_{R}|v^{\lambda}|^{2})^{\frac{1}{2}}.
\]

Therefore, we integrate this differential inequality from $0$ to
$\tilde{\tau}_{0}$ and get:
\begin{align}
\int\psi_{R}|v^{\lambda}(\tilde{\tau}_{0})|^{2} & \leq C[\int\psi_{R}|v^{\lambda}(0)|^{2}+(\int_{0}^{\tilde{\tau}_{0}}||\nabla v^{\lambda}(\sigma)||_{L^{2}}d\sigma)^{2}]\label{weighted L2-3}\\
 & \leq C[R^{3}(M_{0}^{\lambda})^{2}+\tilde{\tau}_{0}\int_{0}^{\tilde{\tau}_{0}}||\nabla v^{\lambda}(\sigma)||_{L^{2}}^{2}d\sigma].\nonumber 
\end{align}

Since $2\tilde{\tau}_{0}\leq\varepsilon^{\frac{2}{3}}\tau_{0}\leq\tau_{0}\leq\tau_{1}$
and using (\ref{upper H1 norm-1}), we obtain
\begin{equation}
\int_{0}^{\tilde{\tau}_{0}}||\nabla v^{\lambda}(\sigma)||_{L^{2}}^{2}d\sigma\leq\frac{1}{\tilde{\tau}_{0}}\int_{0}^{2\tilde{\tau}_{0}}(2\tilde{\tau}_{0}-\sigma)||\nabla v^{\lambda}(\sigma)||_{L^{2}}^{2}d\sigma\leq CG_{\varepsilon}\tilde{\tau}_{0}^{\frac{1}{2}}.\label{H1 decay-2}
\end{equation}

Thus, recalling that $R=A_{1}\sqrt{\tilde{\tau}_{0}}$, we combine
(\ref{weighted L2-3}) and (\ref{H1 decay-2}) to get
\[
\int\psi_{R}|v^{\lambda}(\tilde{\tau}_{0})|^{2}\leq C[R^{3}(M_{0}^{\lambda})^{2}+G_{\varepsilon}\tilde{\tau}_{0}^{\frac{3}{2}}]=CR^{3}[(M_{0}^{\lambda})^{2}+\frac{G_{\varepsilon}}{A_{1}^{3}}]
\]

and concludes the proof of (\ref{weighted L2-2}).

Now we control the virial quantity (\ref{special time momentum})
at time $\tilde{\tau}_{0}$:
\begin{align*}
|Im\int\nabla\psi_{R}\cdotp\nabla v^{\lambda}(\tilde{\tau}_{0})\overline{v^{\lambda}(\tilde{\tau}_{0})}| & \leq R^{\frac{3}{2}}||\nabla v^{\lambda}(\tilde{\tau}_{0})||_{L^{2}}(\frac{1}{R^{3}}\int|v^{\lambda}(\tilde{\tau}_{0})|^{2})^{\frac{1}{2}}\\
 & \leq CR^{\frac{3}{2}}\frac{G_{\varepsilon}^{\frac{1}{2}}}{\tilde{\tau}_{0}^{\frac{1}{4}}}[(M_{0}^{\lambda})^{2}+\frac{G_{\varepsilon}}{A_{1}^{3}}]^{\frac{1}{2}}\\
 & \leq CAA_{1}^{\frac{1}{2}}\tau_{0}^{\frac{1}{2}}G_{\varepsilon}^{\frac{1}{2}}[M_{0}^{\lambda}+\frac{G_{\varepsilon}^{\frac{1}{2}}}{A_{1}^{\frac{3}{2}}}]
\end{align*}

from $A\geq A_{\varepsilon_{1}}$ and the choice of $\tilde{\tau}_{0}$.
From the definition of $G_{\varepsilon},A_{\varepsilon_{1}}$ and
$A_{1}$ in (\ref{chosen parameter}) and (\ref{choose A1}), we derive
\begin{align*}
 & |Im\int\nabla\psi_{R}\cdotp\nabla v^{\lambda}(\tilde{\tau}_{0})\overline{v^{\lambda}(\tilde{\tau}_{0})}|\\
 & \leq C(M_{0}^{\lambda})^{2}A^{3}\tau_{0}^{\frac{1}{2}}\frac{1}{\varepsilon^{\frac{1}{2}}\cdotp\varepsilon^{\frac{1}{6}}}=\frac{C(M_{0}^{\lambda})^{2}A^{3}}{\varepsilon^{\frac{2}{3}}}\tau_{0}^{\frac{1}{2}},
\end{align*}

which finishes the proof of (\ref{special time momentum}).

Part III: Prove the initial control on the virial quantity (\ref{term 2}). 

Using (\ref{local virial identity}), we derive the crude estimate
which connects $\tau=0$ with $\tau=\tilde{\tau}_{0}$:
\begin{align}
|\frac{d}{d\tau}Im(\int\nabla\psi_{R}\cdotp\nabla v^{\lambda}\overline{v^{\lambda}})| & \leq C\{|E^{\lambda}(v_{0}^{\lambda})|+\int|\nabla v^{\lambda}|^{2}dx\label{term2-1}\\
 & +R^{-1}[\rho(v^{\lambda}(\tau),R)+\rho(v^{\lambda}(\tau),R)^{3}+\rho(v^{\lambda}(\tau),R_{1})\rho(v^{\lambda}(\tau),R)]\}.\nonumber 
\end{align}

Since 
\begin{align*}
 & R^{-1}[\rho(v^{\lambda}(\tau),R)+\rho(v^{\lambda}(\tau),R)^{3}+\rho(v^{\lambda}(\tau),R_{1})\rho(v^{\lambda}(\tau),R)]\\
 & \leq\frac{C}{R}\frac{(M_{0}^{\lambda})^{7}}{\varepsilon^{11}}\leq\frac{1}{\tilde{\tau}_{0}^{\frac{1}{2}}}
\end{align*}

for $\varepsilon$ small enough, we conclude from (\ref{term2-1})
\begin{equation}
|\frac{d}{d\tau}Im(\int\nabla\psi_{R}\cdotp\nabla v^{\lambda}\overline{v^{\lambda}})|\leq C\{|E^{\lambda}(v_{0}^{\lambda})|+\int|\nabla v^{\lambda}|^{2}dx+\frac{1}{\tilde{\tau}_{0}^{\frac{1}{2}}}\}.\label{term2-2}
\end{equation}

We integrate (\ref{term2-2}) in time from 0 to $\tilde{\tau}_{0}$
and get 
\begin{align}
|Im\int\nabla\psi_{R}\cdotp\nabla v^{\lambda}(0)\overline{v^{\lambda}(0)}| & \leq|Im\int\nabla\psi_{R}\cdotp\nabla v^{\lambda}(\tilde{\tau}_{0})\overline{v^{\lambda}(\tilde{\tau}_{0})}|\\
 & +C[\int_{0}^{\tilde{\tau}_{0}}||\nabla v^{\lambda}(\sigma)||_{L^{2}}^{2}d\sigma+|E^{\lambda}(v_{0}^{\lambda})|\tilde{\tau}_{0}+\tilde{\tau}_{0}^{\frac{1}{2}}].\nonumber 
\end{align}

Thanks to (\ref{special time momentum}) and (\ref{H1 decay-2}),
we conclude
\begin{align}
Im\int\nabla\psi_{R}\cdotp\nabla v^{\lambda}(0)\overline{v^{\lambda}(0)}+E^{\lambda}(v_{0}^{\lambda})\tau_{0} & \leq\frac{C(M_{0}^{\lambda})^{2}A^{3}}{\varepsilon^{\frac{2}{3}}}\tau_{0}^{\frac{1}{2}}+CG_{\varepsilon}\tilde{\tau}_{0}^{\frac{1}{2}}\label{term2-3}\\
 & +E^{\lambda}(v_{0})\tau_{0}+C|E^{\lambda}(v_{0})|\tilde{\tau}_{0}.\nonumber 
\end{align}

Our main task is to estimate the right-hand side in (\ref{term2-3})
term by term. Since the first term has been controlled, we estimate
the second term
\begin{equation}
G_{\varepsilon}\tilde{\tau}_{0}^{\frac{1}{2}}\leq CA_{\varepsilon_{1}}^{3}(M_{0}^{\lambda})^{2}\frac{\varepsilon^{\frac{1}{3}}}{\varepsilon}\tau_{0}^{\frac{1}{2}}\leq\frac{C(M_{0}^{\lambda})^{2}A^{3}}{\varepsilon^{\frac{2}{3}}}\tau_{0}^{\frac{1}{2}},
\end{equation}

where we use the definition of $G_{\varepsilon},A_{\varepsilon_{1}}$
in (\ref{chosen parameter}) and the choice of $\tilde{\tau}_{0}$
in (\ref{special time H1 decay}). Lastly, to control the remaining
two terms in (\ref{term2-3}), we observe that
\begin{align*}
E^{\lambda}(v_{0}^{\lambda})\tau_{0}+C|E^{\lambda}(v_{0}^{\lambda})|\tilde{\tau}_{0} & \leq[E^{\lambda}(v_{0}^{\lambda})+C\varepsilon^{\frac{2}{3}}|E^{\lambda}(v_{0}^{\lambda})|]\tau_{0}\\
 & \leq C\max[E^{\lambda}(v_{0}^{\lambda}),0]\tau_{0}\leq C\tau_{0}^{\frac{1}{2}}
\end{align*}

for $\varepsilon>0$ small enough and we have used the condition (\ref{scaling energy-3}).
Summing up the above estimates, we finish the desired proof (\ref{term 2}).
\end{proof}
The following proposition implies a nontrivial repartition of the $L^{2}$
mass of the initial data. This repartition helps us to get uniform
lower bound on sufficient disjoint annuli.
\begin{prop}
\label{prop 2 of ours}(Lower bound on a weighted local $L^{2}$ norm
of $v^{\lambda}(0))$

Let $\tau_{*}>0$ and $v^{\lambda}(\tau)\in C([0,\tau_{*}],\dot{H}^{\frac{1}{2}}\cap\dot{H}^{1})$
be a radially symmetric solution to (\ref{eq:scaling-3-1}) and assume
(\ref{scaling energy-3}) and (\ref{scaling initial data-3}) in Prop
\ref{prop 1 of ours} hold. Then there exist universal constants
$\alpha_{3},c_{3}>0$ which are independent of $\lambda$ such that
the following holds true. Let 

\begin{equation}
\tilde{\lambda}_{v}(\tau)=(\frac{1}{||\nabla v^{\lambda}(\tau)||_{L^{2}}})^{2},\label{scaling H1}
\end{equation}

and let 
\begin{equation}
\tau_{0}\in[0,\frac{\tau_{*}}{2}],
\end{equation}

\begin{equation}
\tilde{\lambda}_{v}(\tau_{0})E^{\lambda}(v_{0}^{\lambda})\leq\frac{1}{4}.\label{scaling energy-4}
\end{equation}

Let 
\begin{equation}
F_{*}=\frac{\sqrt{\tau_{0}}}{\tilde{\lambda}_{v}(\tau_{0})},\label{scaling time}
\end{equation}

and 
\begin{equation}
D_{*}=(M_{0}^{\lambda})^{\alpha_{3}}max[1,F_{*}^{3}],\label{large constant}
\end{equation}

then 
\begin{equation}
\frac{1}{\tilde{\lambda}_{v}(\tau_{0})}\int_{|x|\leq D_{*}\tilde{\lambda}_{v}(\tau_{0})}|v^{\lambda}(0)|^{2}\geq c_{3}.\label{weighted L2 lower bound}
\end{equation}
\end{prop}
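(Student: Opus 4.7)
The proof follows the two-stage strategy reviewed in the introduction. In the first stage I obtain the lower bound at the later time $\tau=\tau_0$,
\begin{equation*}
\frac{1}{\tilde{\lambda}_v(\tau_0)}\int_{|x|\leq D_*\tilde{\lambda}_v(\tau_0)}|v^\lambda(\tau_0)|^2\,dx\geq 2c_3,
\end{equation*}
and in the second stage I show the difference between this integral and its analogue at time $0$ is at most $c_3$; added together they yield (\ref{weighted L2 lower bound}). The exponent $\alpha_3$ is fixed at the very end so as to dominate every other constant ($\alpha_1$, $\alpha_2$, and the exponents arising from each stage), while the $F_*^3$ factor appearing in the definition (\ref{large constant}) of $D_*$ enters specifically through the second stage.

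\medskip

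\emph{Stage 1.} From the energy conservation identity applied at time $\tau_0$ and the hypothesis (\ref{scaling energy-4}),
\begin{equation*}
\tilde{\lambda}_v(\tau_0)\int\bigl(V_\lambda*|v^\lambda(\tau_0)|^2\bigr)|v^\lambda(\tau_0)|^2\,dx = 2-4\tilde{\lambda}_v(\tau_0)E^\lambda(v_0^\lambda)\geq 1.
\end{equation*}
Set $R_0=D_*\tilde{\lambda}_v(\tau_0)$ and split $|v^\lambda(\tau_0)|^2=|v^\lambda(\tau_0)|^2\chi_{\{|x|\leq R_0\}}+|v^\lambda(\tau_0)|^2\chi_{\{|x|>R_0\}}$ in both factors of the convolution integral. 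The three resulting terms that contain at least one exterior factor are handled by Young's inequality (using the scale-invariant $\|V_\lambda\|_{L^1}=\|V\|_{L^1}$) combined with the radial decay estimate Lemma~\ref{sobolev lemma 1}(ii) and the Morrey--Campanato bound (\ref{upper M-C norm}); the applicability condition $R_0\geq(M_0^\lambda)^{\alpha_1}\sqrt{\tau_0}$ is built into the choice of $\alpha_3$. Provided $\alpha_3$ is large these outer contributions are at most $1/(2\tilde{\lambda}_v(\tau_0))$, leaving $\|V\|_{L^1}\int_{|x|\leq R_0}|v^\lambda(\tau_0)|^4\,dx\gtrsim 1/\tilde{\lambda}_v(\tau_0)$. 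A radial Gagliardo--Nirenberg estimate on the inner integral reduces the left-hand side to $\int_{|x|\leq R_0}|v^\lambda(\tau_0)|^2\,dx$ times polynomial factors of $M_0^\lambda$ and $\|\nabla v^\lambda(\tau_0)\|_{L^2}^2=1/\tilde{\lambda}_v(\tau_0)$; solving for the local $L^2$ mass produces the announced lower bound $2c_3$.

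\medskip

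\emph{Stage 2.} Take a radial cutoff $\chi_{R_0}$ equal to $1$ on $\{|x|\leq R_0\}$, supported in $\{|x|\leq 2R_0\}$, with $|\nabla\chi_{R_0}|\leq C/R_0$. The identity $\frac{d}{d\tau}\int\chi_{R_0}|v^\lambda|^2\,dx=2\,\mathrm{Im}\int\nabla\chi_{R_0}\cdot\nabla v^\lambda\,\overline{v^\lambda}\,dx$, Cauchy--Schwarz, and the annular bound $\int_{R_0\leq|x|\leq 2R_0}|v^\lambda|^2\,dx\leq CR_0(M_0^\lambda)^2$ from (\ref{upper M-C norm}) give
\begin{equation*}
\Bigl|\int\chi_{R_0}|v^\lambda(\tau_0)|^2\,dx-\int\chi_{R_0}|v^\lambda(0)|^2\,dx\Bigr|\leq \frac{CM_0^\lambda}{\sqrt{R_0}}\int_0^{\tau_0}\|\nabla v^\lambda(\tau)\|_{L^2}\,d\tau.
\end{equation*}
Applying (\ref{upper H1 norm}) at time $2\tau_0\leq\tau_*$ and dividing by $\tau_0$ controls $\int_0^{\tau_0}\|\nabla v^\lambda\|_{L^2}^2\,d\tau\leq C(M_0^\lambda)^{\alpha_2}\tau_0^{1/2}$; a further Cauchy--Schwarz in $\tau$ then yields $\int_0^{\tau_0}\|\nabla v^\lambda\|_{L^2}\,d\tau\leq C(M_0^\lambda)^{\alpha_2/2}\tau_0^{3/4}$. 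Dividing the whole estimate by $\tilde{\lambda}_v(\tau_0)$, the requirement ``difference $\leq c_3$'' translates into $D_*\gtrsim(M_0^\lambda)^{2+\alpha_2}F_*^3$, which is precisely the source of the $F_*^3$ factor in (\ref{large constant}).

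\medskip

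\emph{Main obstacle.} Compared with the NLS argument of Merle--Rapha\"el, the genuinely new difficulty appears in stage 1: the nonlocal convolution forces one to deal with cross terms in which one factor $|v^\lambda|^2$ sits on $\{|x|\leq R_0\}$ and the other on $\{|x|>R_0\}$, a structural issue of the same type as the error $\Phi(\tau)$ in the proof of Proposition~\ref{prop 1 of ours}. The scale-invariance $\|V_\lambda\|_{L^1}=\|V\|_{L^1}$ keeps the Young-inequality bounds uniform in $\lambda$; together with the $\rho$-control (\ref{upper M-C norm}) this absorbs the cross terms into the outer $L^4$ error. Once this is done, the remainder is careful bookkeeping of powers of $M_0^\lambda$ and $F_*$ that collectively fix $\alpha_3$.
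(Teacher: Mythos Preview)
Your Stage~2 is essentially identical to the paper's Step~2 (backwards integration of the $L^2$ fluxes), and the bookkeeping that produces the $F_*^3$ factor is correct.

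The gap is in Stage~1. You claim that ``the three resulting terms that contain at least one exterior factor'' can be bounded by $1/(2\tilde{\lambda}_v(\tau_0))$ using Young's inequality together with Lemma~\ref{sobolev lemma 1}(ii) and the Morrey--Campanato control (\ref{upper M-C norm}). This works for the outer--outer piece, but it fails for the two cross terms. Any Young-type estimate on, say,
\[
\iint_{|x|\leq R_0,\;|y|>R_0} V_\lambda(x-y)\,|v^\lambda(\tau_0,x)|^2\,|v^\lambda(\tau_0,y)|^2\,dx\,dy
\]
produces a factor $\|v^\lambda(\tau_0)\chi_{\{|x|\leq R_0\}}\|_{L^4}^2$ (or $\|v^\lambda(\tau_0)\chi_{\{|x|\leq R_0\}}\|_{L^3}^2$) for which there is \emph{no} a~priori upper bound: Lemma~\ref{sobolev lemma 1}(ii) and (\ref{upper M-C norm}) only control the exterior region, and $\|v^\lambda(\tau)\|_{L^3}$ is known only at $\tau=0$. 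In fact your very goal is that the interior $L^4$ mass be \emph{large}, so the cross term need not be small at all.

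The paper does not attempt to make the cross terms small. After rescaling by $\tilde{\lambda}_v(\tau_0)$ to $w$ with $\|\nabla w\|_{L^2}=1$, it removes only the outer--outer part and then observes that each of the three remaining pieces (inner--inner and both cross terms) admits the \emph{same upper bound}
\[
\Bigl|\iint_{\{|x|\leq A_\varepsilon\}\times\{\cdot\}} V_{\lambda\tilde{\lambda}_v(\tau_0)}(x-y)|w(x)|^2|w(y)|^2\,dx\,dy\Bigr|\leq C\,\|w\chi_{\{|x|\leq A_\varepsilon\}}\|_{L^2}\,\|\nabla w\|_{L^2}^{3},
\]
obtained via H\"older ($L^{3/2}\times L^3$), Young ($\|V_\lambda*|w|^2\|_{L^3}\leq\|V\|_{L^1}\|w\|_{L^6}^2$) and Sobolev. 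Since at least one of the three pieces has size $\geq 1/6$ by pigeonhole, this immediately yields $\|w\chi_{\{|x|\leq A_\varepsilon\}}\|_{L^2}\geq c_3$, which after undoing the rescaling is exactly the desired local $L^2$ lower bound at $\tau_0$. Note that this bypasses the inner $L^4$ step entirely; your final interpolation $\int_{B}|v|^4\leq C\|v\|_{L^2(B)}\|\nabla v\|_{L^2}^3$ is precisely this H\"older--Sobolev bound, so the repair is simply to apply it to \emph{all} inner-containing pieces rather than only to the inner--inner one.
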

\begin{proof}
Step 1. Energy constraint and lower bound on $v^{\lambda}(\tau_{0}).$

We claim that there exist universal constants $C_{3},c_{3}>0$ which
are independent of $\lambda$ such that:

\begin{equation}
\frac{1}{\tilde{\lambda}_{v}(\tau_{0})}\int_{|x|\leq A_{*}\tilde{\lambda}_{v}(\tau_{0})}|v^{\lambda}(\tau_{0})|^{2}\geq c_{3}\label{weighted L2 at large time}
\end{equation}

with
\begin{equation}
A_{*}=C_{3}\max[(M_{0}^{\lambda})^{\alpha_{1}}F_{*},(M_{0}^{\lambda})^{6}].\label{large constant-1}
\end{equation}

Now we prove the claim (\ref{weighted L2 at large time}). Consider
a renormalization of $v^{\lambda}(\tau_{0})$ :

\begin{equation}
w(x)=\tilde{\lambda}_{v}(\tau_{0})v^{\lambda}(\tau_{0},\tilde{\lambda}_{v}(\tau_{0})x),\label{renorm of v}
\end{equation}

then from (\ref{scaling H1}), (\ref{scaling energy-4}) and the conservation
of the energy:
\begin{equation}
||\nabla w||_{L^{2}}=1
\end{equation}

and
\begin{align}
E^{\lambda\cdotp\tilde{\lambda}_{v}(\tau_{0})}(w) & =\frac{1}{2}\int|\nabla w|^{2}dx-\frac{1}{4}\int(V_{\lambda\cdotp\tilde{\lambda}_{v}(\tau_{0})}*|w|^{2})(x)|w(x)|^{2}dx\\
 & =\tilde{\lambda}_{v}(\tau_{0})[\frac{1}{2}\int|\nabla v^{\lambda}|^{2}dx-\frac{1}{4}\int(V_{\lambda}*|v^{\lambda}|^{2})(x)|v^{\lambda}(x)|^{2}dx]\nonumber \\
 & =\tilde{\lambda}_{v}(\tau_{0})E^{\lambda}(v^{\lambda}(\tau_{0}))=\tilde{\lambda}_{v}(\tau_{0})E^{\lambda}(v{}_{0}^{\lambda})\leq\frac{1}{4},\nonumber 
\end{align}

and thus 
\[
\iint_{R^{3}\times R^{3}}V_{\lambda\cdotp\tilde{\lambda}_{v}(\tau_{0})}(x-y)|w(x)|^{2}|w(y)|^{2}dxdy=4(\frac{1}{2}||\nabla w||_{L_{x}^{2}}^{2}-E^{\lambda\cdotp\tilde{\lambda}_{v}(\tau_{0})}(w))\geq1.
\]

Pick now $\varepsilon>0$ small enough and let 
\begin{equation}
A_{\varepsilon}=C_{\varepsilon}\max[(M_{0}^{\lambda})^{\alpha_{1}}F_{*},(M_{0}^{\lambda})^{6}]
\end{equation}

for $C_{\varepsilon}$ large enough to be chosen. First we observe
that 
\begin{align*}
\rho(w,A_{\varepsilon}) & \leq\rho(w,(M_{0}^{\lambda})^{\alpha_{1}}F_{*})=\rho(v^{\lambda}(\tau_{0}),(M_{0}^{\lambda})^{\alpha_{1}}\tilde{\lambda}_{v}(\tau_{0})F_{*})\\
 & =\rho(v^{\lambda}(\tau_{0}),(M_{0}^{\lambda})^{\alpha_{1}}\sqrt{\tau_{0}})\leq C_{1}(M_{0}^{\lambda})^{2}.
\end{align*}

Thus, from (\ref{the integrability condition}), (\ref{eq:sobolev-2})
\begin{align}
 & \iint_{|x|\geq A_{\varepsilon},|y|\geq A_{\varepsilon}}V_{\lambda\cdotp\tilde{\lambda}_{v}(\tau_{0})}(x-y)|w(x)|^{2}|w(y)|^{2}dxdy\leq||w(x)\cdotp\chi_{\{|x|\geq A_{\varepsilon}\}}||_{L^{4}}^{2}||w(y)\cdotp\chi_{\{|y|\geq A_{\varepsilon}\}}||_{L^{4}}^{2}\\
 & \leq||w(x)\cdotp\chi_{\{|x|\geq A_{\varepsilon}\}}||_{L^{4}}^{4}\nonumber \\
 & \leq\varepsilon||\nabla w||_{L_{x}^{2}}^{2}+\frac{C(\varepsilon)}{A_{\varepsilon}}[\rho(w,A_{\varepsilon})+\rho(w,A_{\varepsilon})^{3}]\nonumber \\
 & \leq2\varepsilon\nonumber 
\end{align}

for $C_{\varepsilon}$ large enough and 
\begin{equation}
\iint_{|x|\leq A_{\varepsilon},|y|\leq A_{\varepsilon}}+\iint_{|x|\geq A_{\varepsilon},|y|\leq A_{\varepsilon}}+\iint_{|x|\leq A_{\varepsilon},|y|\geq A_{\varepsilon}}V_{\lambda\cdotp\tilde{\lambda}_{v}(\tau_{0})}(x-y)|w(x)|^{2}|w(y)|^{2}dxdy\geq\frac{1}{2}.
\end{equation}

By the Pigeon house principle, there must exist at least one term that
is bigger than $\frac{1}{6}$. Without loss of generality, we assume
\[
\iint_{|x|\leq A_{\varepsilon},|y|\geq A_{\varepsilon}}V_{\lambda\cdotp\tilde{\lambda}_{v}(\tau_{0})}(x-y)|w(x)|^{2}|w(y)|^{2}dxdy\geq\frac{1}{6}.
\]

Then by the H\"older inequality, Young inequality and Sobolev embedding
theory, 
\[
\frac{1}{6}\leq C||\chi_{\{|x|\leq A_{\varepsilon}\}}w||_{L_{x}^{2}}||\nabla w||_{L_{y}^{2}}^{3},
\]

which implies that 
\[
\int_{|x|\leq A_{\varepsilon}}|w(x)|^{2}dx\geq c_{3}>0
\]

for some constant $c_{3}>0$ which is independent of $\lambda$. By
(\ref{renorm of v}), this is 
\[
\frac{1}{\tilde{\lambda}_{v}(\tau_{0})}\int_{|x|\leq A_{*}\tilde{\lambda}_{v}(\tau_{0})}|v^{\lambda}(\tau_{0})|^{2}\geq c_{3}
\]

which finishes the claim (\ref{weighted L2 at large time}).

Step 2. Backwards integration of the $L^{2}$ fluxes.

We claim: $\forall\varepsilon>0$, there exists $\tilde{C}_{\varepsilon}>0$
such that $\forall D\geq D_{\varepsilon}$ with 
\begin{equation}
D_{\varepsilon}=\tilde{C}_{\varepsilon}\max[F_{*},F_{*}^{3}]\cdotp\max[(M_{0}^{\lambda})^{\alpha_{1}},(M_{0}^{\lambda})^{2+\alpha_{2}}],\label{large constant-2}
\end{equation}

let
\begin{equation}
\tilde{R}=\tilde{R}(D,\tau_{0})=D\tilde{\lambda}_{v}(\tau_{0}),\label{renorm of time-1}
\end{equation}

and $\chi_{\tilde{R}}(r)=\chi(\frac{r}{R})$ for some smooth radially
symmetric cut-off function $\chi(r)=1$ for $r\leq1,\chi(r)=0$ for
$r\geq2$, then :
\begin{equation}
|\frac{1}{\tilde{\lambda}_{v}(\tau_{0})}\int\chi_{\tilde{R}}|v^{\lambda}(\tau_{0})|^{2}-\frac{1}{\tilde{\lambda}_{v}(\tau_{0})}\int\chi_{\tilde{R}}|v^{\lambda}(0)|^{2}|<\varepsilon.\label{the difference}
\end{equation}

(\ref{weighted L2 at large time}) and (\ref{the difference}) now
imply (\ref{weighted L2 lower bound}).

Proof of (\ref{the difference}). Pick $\varepsilon>0$. We compute
the $L^{2}$ fluxes from (\ref{virial identity-1}) with $\chi_{\tilde{R}}$
:
\begin{align*}
|\frac{d}{d\tau}\int\chi_{\tilde{R}}|v^{\lambda}|^{2}| & =2|Im(\int\nabla\chi_{\tilde{R}}\cdotp\nabla v^{\lambda}\bar{v^{\lambda}})|\\
 & \leq\frac{C}{\tilde{R}}||\nabla v^{\lambda}(\tau)||_{L^{2}}(\int_{\tilde{R}\leq|x|\leq2\tilde{R}}|v^{\lambda}(\tau)|^{2})^{\frac{1}{2}}\\
 & \leq\frac{C}{\tilde{R}^{\frac{1}{2}}}||\nabla v^{\lambda}(\tau)||_{L^{2}}(\rho(v^{\lambda}(\tau),\tilde{R}))^{\frac{1}{2}}.
\end{align*}

Now observe from (\ref{scaling time}), (\ref{large constant-2})
and (\ref{renorm of time-1}) that :
\[
\forall\tau\in[0,\tau_{0}],\;\tilde{R}=D\tilde{\lambda}_{v}(\tau_{0})\geq D_{\varepsilon}\frac{\tilde{\lambda}_{v}(\tau_{0})}{\sqrt{\tau_{0}}}\sqrt{\tau_{0}}\geq\frac{D_{\varepsilon}}{F_{*}}\sqrt{\tau}\geq(M_{0}^{\lambda})^{\alpha_{1}}\sqrt{\tau},
\]

and thus (\ref{upper M-C norm}) and the monotonicity of $\rho$ ensure
:
\begin{equation}
\forall\tau\in[0,\tau_{0}],\;\rho(v^{\lambda}(\tau),\tilde{R})\leq\rho(v^{\lambda}(\tau),(M_{0}^{\lambda})^{\alpha_{1}}\sqrt{\tau})<C_{1}(M_{0}^{\lambda})^{2}.
\end{equation}

Now we derive that 
\[
\forall\tau\in[0,\tau_{0}],\;|\frac{d}{d\tau}\int\chi_{\tilde{R}}|v^{\lambda}(\tau)|^{2}|\leq\frac{CM_{0}^{\lambda}}{\tilde{R}^{\frac{1}{2}}}||\nabla v^{\lambda}(\tau)||_{L^{2}}.
\]

We integrate this between 0 and $\tau_{0}$, divide by $\tilde{R}$
and use (\ref{upper H1 norm}) to get :
\begin{align*}
 & |\frac{1}{\tilde{\lambda}_{v}(\tau_{0})}\int\chi_{\tilde{R}}|v^{\lambda}(\tau_{0})|^{2}-\frac{1}{\tilde{\lambda}_{v}(\tau_{0})}\int\chi_{\tilde{R}}|v^{\lambda}(0)|^{2}|\\
 & \leq\frac{CM_{0}^{\lambda}\cdotp D}{\tilde{R}^{\frac{3}{2}}}\int_{0}^{\tau_{0}}||\nabla v^{\lambda}(\tau)||_{L^{2}}d\tau\leq\frac{CM_{0}^{\lambda}\cdotp D}{D^{\frac{3}{2}}\cdotp\tilde{\lambda}_{v}(\tau_{0})^{\frac{3}{2}}}(\tau_{0}\int_{0}^{\tau_{0}}||\nabla v^{\lambda}(\tau)||_{L^{2}}^{2}d\tau)^{\frac{1}{2}}\\
 & \leq\frac{CM_{0}^{\lambda}}{D^{\frac{1}{2}}\cdotp\tilde{\lambda}_{v}(\tau_{0})^{\frac{3}{2}}}(\int_{0}^{2\tau_{0}}(2\tau_{0}-\tau)||\nabla v(\tau)||_{L^{2}}^{2}d\tau)^{\frac{1}{2}}\leq\frac{CM_{0}^{\lambda}}{D^{\frac{1}{2}}\cdotp\tilde{\lambda}_{v}(\tau_{0})^{\frac{3}{2}}}(M_{0}^{\lambda})^{\frac{\alpha_{2}}{2}}\cdotp\tau_{0}{}^{\frac{3}{4}}\\
 & \leq\frac{C(M_{0}^{\lambda})^{1+\frac{\alpha_{2}}{2}}F_{*}^{\frac{3}{2}}}{D_{\varepsilon}^{\frac{1}{2}}}\leq\varepsilon
\end{align*}

for $\tilde{C}_{\varepsilon}$ large enough which ends the proof. 
\end{proof}
In the end of this section, we state a proposition as a blackbox to
finish the proof of Theorem \ref{thm 2 of ours},
\begin{prop}
\label{blackbox in Hartree}Let $v_{\varepsilon}(\tau,x)\in C([0,e^{N}],\dot{H}^{\frac{1}{2}}\cap\dot{H}^{1})$\footnote{$N$ is a large enough fixed number.}
be a radial solution to (\ref{eq:Hartree-4}) with the potential function
\[
V_{\varepsilon}(x):=\frac{1}{\varepsilon^{3}}V(\frac{x}{\varepsilon}),\;\forall\varepsilon>0,
\]

where $V(x)$ is a fixed potential function satisfying the conditions
in Theorem \ref{thm 2 of ours} and initial data satisfies 
\begin{equation}
||v_{\varepsilon}(0,x)||_{\dot{H}^{1}}=1,\label{renorm condition}
\end{equation}
Besides, we assume the following conditions\footnote{$E^{V_{\varepsilon}}(v_{0}):=\frac{1}{2}\int|\nabla v_{\varepsilon}(0)|^{2}-\frac{1}{4}\int(V_{\varepsilon}*|v_{\varepsilon}(0)|^{2})|v_{\varepsilon}(0)|^{2}.$}
\begin{equation}
e^{\frac{N}{2}}\cdotp\max(E^{V}(v_{\varepsilon}(0)),0)<1,\label{energy condition-hartree-1}
\end{equation}

\begin{equation}
M_{0}:=\frac{4||v_{\varepsilon}(0)||_{L^{3}}}{c_{V}}\geq2,\label{initial condition-hartree-1}
\end{equation}

and 
\begin{equation}
\forall\tau_{0}\in[0,e^{N}],\;\frac{E^{V_{\varepsilon}}(v_{\varepsilon}(0))}{||v_{\varepsilon}(\tau_{0})||_{\dot{H}^{1}}^{2}}\leq\frac{1}{4},\label{energy condition-hartree-2}
\end{equation}

then there exists a universal constant $\gamma_{1}>0$ which is independent
of $\varepsilon$ such that 
\[
||v_{\varepsilon}(0)||_{L^{3}}\geq N^{\gamma_{1}}.
\]
\end{prop}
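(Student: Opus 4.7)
The plan is to follow Merle--Rapha\"el's three-step scheme: apply Proposition \ref{prop 1 of ours} to get uniform dispersive control, apply Proposition \ref{prop 2 of ours} to get ball-type $L^2$-mass lower bounds on $v_\varepsilon(0)$ at many times, then glue these into $L^3$ contributions on disjoint annuli. The scaling computation \eqref{eq:scaling potential} together with $\|V_\varepsilon\|_{L^1}=\|V\|_{L^1}$ shows $V_\varepsilon$ satisfies the structural hypotheses \eqref{connection condition}--\eqref{eq:technisch reason} with $\varepsilon$-independent constants, so the constants produced by Propositions \ref{prop 1 of ours}--\ref{prop 2 of ours} are $\varepsilon$-independent. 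I argue by contradiction: suppose $\|v_\varepsilon(0)\|_{L^3}\le N^{\gamma_1}$, equivalently $M_0\le CN^{\gamma_1}$, for a $\gamma_1>0$ to be fixed.

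Hypotheses \eqref{renorm condition}--\eqref{initial condition-hartree-1} match \eqref{scaling energy-3}--\eqref{scaling initial data-3} with $\tau_\ast=e^N$, so Proposition \ref{prop 1 of ours} yields $\rho(v_\varepsilon(\tau_\ast),M_0^{\alpha_1}\sqrt{\tau_\ast})\le C_1M_0^2$ and
\[\int_0^{\tau_\ast}(\tau_\ast-\tau)\|\nabla v_\varepsilon(\tau)\|_{L^2}^2\,d\tau\le M_0^{\alpha_2}\tau_\ast^{3/2}\]
for every $\tau_\ast\in[0,e^N]$. Provided $M_0^{\alpha_2}\le e^{\sqrt N/2}$ (enforceable by taking $\gamma_1$ small), a standard dyadic pigeonhole extracts a sequence $\tau_i\in[0,e^i]$, $i\in\{\sqrt N,\dots,N\}$, with $\sqrt{\tau_i}/\tilde\lambda_v(\tau_i)\le C_1M_0^{\alpha_2}$ and $\tilde\lambda_v(\tau_i)\asymp e^{i/2}/M_0^{\alpha_2}$. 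Hypothesis \eqref{energy condition-hartree-2} then gives $\tilde\lambda_v(\tau_i)E^{V_\varepsilon}(v_\varepsilon(0))\le 1/4$, so Proposition \ref{prop 2 of ours} applies at each $\tau_i$ with $F_\ast(\tau_i)\le C_1M_0^{\alpha_2}$ and $D_\ast\le CM_0^{\alpha_3+3\alpha_2}$, producing
\[\frac{1}{\tilde\lambda_v(\tau_i)}\int_{|x|\le D_\ast\tilde\lambda_v(\tau_i)}|v_\varepsilon(0)|^2\,dx\ge c_3.\]

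To pass from balls to annuli, fix a universal exponent $\beta\ge\alpha_3+3\alpha_2+3$ and set $\mathscr{C}_i:=\{x\in\mathbb{R}^3:\tilde\lambda_v(\tau_i)/M_0^{\beta}\le|x|\le\tilde\lambda_v(\tau_i)M_0^{\beta}\}$. Lemma \ref{sobolev lemma 1}(i) bounds the interior mass of $v_\varepsilon(0)$ by $CM_0^2\cdot\tilde\lambda_v(\tau_i)/M_0^\beta\le(c_3/2)\tilde\lambda_v(\tau_i)$ once $M_0$ exceeds a universal constant, so $\int_{\mathscr{C}_i}|v_\varepsilon(0)|^2\,dx\ge(c_3/2)\tilde\lambda_v(\tau_i)$. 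H\"older on $\mathscr{C}_i$ (volume $\asymp\tilde\lambda_v(\tau_i)^3M_0^{3\beta}$) then converts this into $\int_{\mathscr{C}_i}|v_\varepsilon(0)|^3\,dx\ge cM_0^{-3\beta/2}$. Taking indices $i_k=\sqrt N+kp$ with $p\asymp\log M_0^{2\beta}$ ensures $\tilde\lambda_v(\tau_{i_{k+1}})/M_0^\beta\ge\tilde\lambda_v(\tau_{i_k})M_0^\beta$, so the $\mathscr{C}_{i_k}$ are pairwise disjoint; this yields $K\asymp N/\log M_0$ annuli.

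Summing, $\|v_\varepsilon(0)\|_{L^3}^3\gtrsim N/(M_0^{3\beta/2}\log M_0)$, hence $M_0^{3+3\beta/2}\log M_0\gtrsim N$; the assumption $M_0\le N^{\gamma_1}$ with $\gamma_1<1/(3+3\beta/2)$ then fails for $N$ large, giving the contradiction and fixing $\gamma_1$. The main obstacle is the passage to annuli: one must simultaneously arrange disjointness (which forces the index spacing $p$ to scale like $\log M_0$, capping $K$) and absorb the inner ball of $\mathscr{C}_i$ via the $L^3$-size of the initial datum, while tracking through the powers $\alpha_1,\alpha_2,\alpha_3$ inherited from Propositions \ref{prop 1 of ours}--\ref{prop 2 of ours}. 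The $\varepsilon$-uniformity rests on these propositions having been proved with $\lambda$-independent constants.
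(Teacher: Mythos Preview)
Your proposal is correct and follows exactly the approach the paper intends: the paper states Proposition~\ref{blackbox in Hartree} as a ``blackbox'' without a self-contained proof, relying on the three-step Merle--Rapha\"el scheme reviewed in Section~1.2 (extract good times $\tau_i$ from the dispersive estimate of Proposition~\ref{prop 1 of ours}, apply Proposition~\ref{prop 2 of ours} at each $\tau_i$, then build $N^\gamma$ disjoint annuli and sum). Your write-up fills in precisely these details, and your observation that the $\varepsilon$-uniformity follows from the $\lambda$-independence of the constants in Propositions~\ref{prop 1 of ours}--\ref{prop 2 of ours} (via \eqref{eq:scaling potential} and $\|V_\varepsilon\|_{L^1}=\|V\|_{L^1}$) is exactly the point the paper stresses; one cosmetic remark is that the phrase ``once $M_0$ exceeds a universal constant'' in the annulus step is better handled by enlarging $\beta$ so that $2^{\beta-2}\ge 2C/c_3$, since the standing hypothesis only guarantees $M_0\ge 2$.
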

Using this proposition to equation (\ref{eq:scaling-3-1}) with $\tau=0$,
it is direct to see 
\[
||u(t,x)||_{L^{3}}=||v^{\lambda}(0,x)||_{L^{3}}\geq|\log(T-t)|^{\gamma},
\]

and we finish the proof of Theorem \ref{thm 2 of ours}.

\section{The connection between Schr0dinger equation and Hartree equation}

In section 2, we have proved Theorem \ref{thm 2 of ours}. In this
section, we aim to show the our result also implies Theorem \ref{thm PR and FM}
as said in section 1.2. Before our analysis, we state a stronger version
proved by Merle and Rapha\"el in \cite{Main reference of FM and PR},
which is also claimed in Remark \ref{a weak version},
\begin{prop}
\label{blackbox of FM PR}Let $v(\tau,x)\in C([0,e^{N}],\dot{H}^{\frac{1}{2}}\cap\dot{H}^{1})$
be a radial solution to equation (\ref{eq:Schrodinger}) with the
initial data satisfying 
\begin{equation}
||v(0,x)||_{\dot{H}^{1}}=1,\label{renorm condition-1}
\end{equation}
and satisfies the following conditions\footnote{$E(v_{0}):=\frac{1}{2}\int|\nabla v_{0}|^{2}-\frac{1}{4}\int|v_{0}|^{4}.$}
\begin{equation}
e^{\frac{N}{2}}\cdotp\max(E(v_{0}),0)<1,\label{energy condition-schrodinger-1}
\end{equation}
\begin{equation}
M_{0}:=\frac{4||v_{0}||_{L^{3}}}{C_{GN}}\geq2,\label{initial condition-schrodinger-1}
\end{equation}

and 
\begin{equation}
\forall\tau_{0}\in[0,e^{N}],\;\frac{E(v_{0})}{||v(\tau_{0})||_{\dot{H}^{1}}^{2}}\leq\frac{1}{4},\label{energy condition-schrodinger-2}
\end{equation}

then there exists a universal constant $\gamma>0$ such that 
\begin{equation}
||v_{0}||_{L^{3}}\geq N^{\gamma}.\label{conclusion of FM PR in blackbox}
\end{equation}
\end{prop}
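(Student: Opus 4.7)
The plan is to derive Proposition \ref{blackbox of FM PR} from Proposition \ref{blackbox in Hartree} via the approximation argument sketched in Section 1.2. I would fix once and for all a radial $V$ satisfying the hypotheses of Theorem \ref{thm 2 of ours} (for instance the example in Remark \ref{rmk 1 of thm 2}), normalized so that $\int V\,dx = 1$, and set $V_{\varepsilon}(x) = \varepsilon^{-3}V(x/\varepsilon)$, so that $V_{\varepsilon} \to \delta$ in the sense of distributions as $\varepsilon\to 0^{+}$. Given a radial $v_{0}$ satisfying (\ref{renorm condition-1})--(\ref{energy condition-schrodinger-2}) with corresponding NLS solution $v$ on $[0,e^{N}]$, let $v_{\varepsilon}$ denote the solution to the Hartree equation with potential $V_{\varepsilon}$ and initial datum $v_{0}$ provided by local well-posedness.

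The first step is to invoke the long-time stability theory of Appendix B to show that, for every $\varepsilon$ sufficiently small depending on $N$, the solution $v_{\varepsilon}$ exists on the full interval $[0,e^{N}]$ and $v_{\varepsilon} \to v$ in $L^{\infty}_{\tau}([0,e^{N}];\dot H^{1/2}\cap\dot H^{1})$. The forcing discrepancy $(|v|^{2} - V_{\varepsilon}\ast|v|^{2})\,v$ tends to zero in the relevant dual Strichartz norm because $V_{\varepsilon}\ast f \to f$ in $L^{3/2}$ for every $f\in L^{3/2}$, and the continuity of $v$ into $\dot H^{1}$ on the compact interval $[0,e^{N}]$ provides the needed equicontinuity. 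Iterating the short-time stability lemma on a finite partition of $[0,e^{N}]$ whose mesh is adapted to the Strichartz norm of $v$ (finite because $v$ does not blow up on this interval by hypothesis) closes this step.

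The second step is to verify the hypotheses of Proposition \ref{blackbox in Hartree} for $v_{\varepsilon}$. Condition (\ref{renorm condition}) holds trivially since $v_{\varepsilon}(0) = v_{0}$. For (\ref{energy condition-hartree-1}) one has $E^{V_{\varepsilon}}(v_{0}) \to E(v_{0})$ as $\varepsilon \to 0$ (from $V_{\varepsilon}\ast|v_{0}|^{2}\to|v_{0}|^{2}$ in $L^{3/2}$ together with $|v_{0}|^{2}\in L^{3}$), so (\ref{energy condition-schrodinger-1}) yields the Hartree analogue for $\varepsilon$ small. For (\ref{energy condition-hartree-2}) at each $\tau_{0}\in[0,e^{N}]$, I would combine this energy convergence with the $\dot H^{1}$ convergence of $v_{\varepsilon}(\tau_{0})$ to $v(\tau_{0})$ obtained in the previous step. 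The condition (\ref{initial condition-hartree-1}) involves only the constant $c_{V_{\varepsilon}}$ attached to $V_{\varepsilon}$, and since $V_{\varepsilon}\ast|v|^{2}\to|v|^{2}$ the Hartree Gagliardo--Nirenberg constant $c_{V_{\varepsilon}}$ converges to $C_{GN}$ as $\varepsilon\to 0$, so (\ref{initial condition-schrodinger-1}) implies (\ref{initial condition-hartree-1}) after a harmless adjustment of the exponent. Proposition \ref{blackbox in Hartree} then delivers $||v_{\varepsilon}(0)||_{L^{3}} \geq N^{\gamma_{1}}$, and since $v_{\varepsilon}(0) = v_{0}$ this is exactly (\ref{conclusion of FM PR in blackbox}) with $\gamma = \gamma_{1}$.

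The main obstacle will be the long-time nature of the stability argument: the interval $[0,e^{N}]$ grows with $N$, and a crude Gronwall bound would require $\varepsilon$ exponentially small in the Strichartz norm of $v$. Crucially, however, $N$ is fixed before $\varepsilon$ is chosen, and $v$ does not blow up on $[0,e^{N}]$ by hypothesis, so its Strichartz norm on this interval is finite (albeit possibly $N$-dependent). This permits one to take $\varepsilon$ as small as needed in $N$ and to iterate the local stability lemma on a finite partition. This is precisely the quantitative approximation result that Appendix B is designed to supply.
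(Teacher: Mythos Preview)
Your proposal is correct and follows essentially the same route as the paper: both derive Proposition \ref{blackbox of FM PR} from Proposition \ref{blackbox in Hartree} by taking the \emph{same} initial datum $v_0$, invoking the stability result of Appendix~B (Lemma \ref{local theory lemma C}) to get $v_\varepsilon \to v$ in $C([0,e^N];\dot H^1)$, and then checking the Hartree-side hypotheses (\ref{energy condition-hartree-1})--(\ref{energy condition-hartree-2}) via the convergence $E^{V_\varepsilon}(v_0)\to E(v_0)$ together with $\|v_\varepsilon(\tau_0)\|_{\dot H^1}\to\|v(\tau_0)\|_{\dot H^1}$. One minor simplification: since $\|V_\varepsilon\|_{L^1}=1$, Young's inequality gives $\int(V_\varepsilon*|u|^2)|u|^2\le\|u\|_{L^4}^4$, so the constant $c_{V_\varepsilon}$ in (\ref{eq:sobolev-3}) can be taken equal to $C_{GN}$ for every $\varepsilon$, and no ``adjustment of the exponent'' is needed for (\ref{initial condition-hartree-1}).
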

Now we want to derive proposition \ref{blackbox of FM PR} by applying
our conclusion, proposition \ref{blackbox in Hartree}. First we state
a result of standard stability theory, and supply the proof in Appendix
B,
\begin{lem}
\label{local theory lemma C}We consider the nonlinear Schr\"{o}dinger
equation (\ref{eq:Schrodinger}) and Hartree equation (\ref{eq:Hartree-4})
with the same radial initial data $v_{0}\in\dot{H}^{\frac{1}{2}}\cap\dot{H}^{1}$,
and assume the normalization condition
\begin{equation}
||V(x)||_{L^{1}}=||V_{\varepsilon}(x)||_{L^{1}}=1.
\end{equation}

We define $T_{max}$ as the lifetime of $v(\tau)$ and claim the following
holds true, $\forall\delta>0,\;\forall T\in[0,T_{max}),\;\exists\varepsilon^{*}=\varepsilon^{*}(\delta,T)>0,$
such that $\forall0<\varepsilon<\varepsilon^{*}$ , 
\begin{equation}
||v_{\varepsilon}(\tau)-v(\tau)||_{L_{\tau}^{\infty}([0,T],\dot{H}^{1})}\leq\delta.
\end{equation}
\end{lem}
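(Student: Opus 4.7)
The plan is to treat this as a standard perturbation argument for the $3d$ cubic NLS, viewing the Hartree flow $v_\varepsilon$ as a small modification of the NLS flow $v$. Since $\|V_\varepsilon\|_{L^1}=1$ and $V_\varepsilon\to\delta_0$ as an approximate identity, the Hartree nonlinearity converges to the cubic nonlinearity on any fixed nice function. Define $w(\tau):=v_\varepsilon(\tau)-v(\tau)$, which satisfies $w(0)=0$ and
\begin{equation*}
i\partial_\tau w + \Delta w = \bigl[-(V_\varepsilon*|v_\varepsilon|^2)v_\varepsilon + (V_\varepsilon*|v|^2)v\bigr] + \bigl[-(V_\varepsilon*|v|^2)v + |v|^2v\bigr] =: \mathcal{D}_\varepsilon(v_\varepsilon,v) + \mathcal{E}_\varepsilon(v).
\end{equation*}
The first bracket is Lipschitz in $w$ with coefficient controlled by Strichartz norms of $v$ and $v_\varepsilon$; the second bracket is a pure forcing term depending only on $v$, which tends to zero as $\varepsilon\to 0$ by the approximation-of-identity property.

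First, I would collect the a priori bounds. Since $T<T_{\max}$, the local well-posedness theory for $3d$ cubic NLS in $\dot H^{1/2}\cap \dot H^1$ gives $v\in L^\infty_\tau([0,T],\dot H^{1/2}\cap\dot H^1)$ and $v\in L^q_\tau W^{1,r}_x([0,T])$ for any $\dot H^{1/2}$-admissible Strichartz pair, with some finite norm $\Lambda(v)<\infty$. Next I would quantify the forcing $\mathcal{E}_\varepsilon(v)$: using Young's inequality (absorbing $\|V_\varepsilon\|_{L^1}=1$) together with continuity of translation in the relevant $L^p_x$ spaces, and using Sobolev embedding $\dot H^1(\mathbb{R}^3)\hookrightarrow L^6$ applied to $|v|^2 v$, one obtains $\|\mathcal{E}_\varepsilon(v)\|_{N([0,T])}\to 0$ as $\varepsilon\to 0$ in the dual Strichartz space $N([0,T])$ needed at the $\dot H^1$ level. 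A standard density argument (approximate $v$ by smooth compactly supported functions, for which the convolution statement is immediate, and conclude by continuity) makes this rigorous.

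Then I would run a Picard-type contraction argument in Strichartz spaces on $[0,T]$. Partition $[0,T]$ into finitely many subintervals $I_1,\ldots,I_K$ on each of which the Strichartz norms of $v$ are small enough to close a contraction in the $w$-equation. On the first subinterval $I_1$, Strichartz applied to $w$ yields
\begin{equation*}
\|w\|_{S(I_1)} \lesssim \|v\|_{S(I_1)}^2\,\|w\|_{S(I_1)} + \|\mathcal{E}_\varepsilon(v)\|_{N(I_1)},
\end{equation*}
so that for $\varepsilon$ small enough we obtain $\|w\|_{S(I_1)}\le C\|\mathcal{E}_\varepsilon(v)\|_{N(I_1)}$, which is arbitrarily small. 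Iterating on $I_2,\ldots,I_K$, with initial error $\|w(t_j)\|_{\dot H^1}$ at the junction times absorbed by the inhomogeneous Strichartz estimate, propagates smallness of $w$ all the way up to time $T$, yielding the claim.

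The main obstacle I anticipate is executing the two-step smallness transfer at $\dot H^1$ regularity: proving the quantitative convergence $\|V_\varepsilon*|v|^2\cdot v - |v|^2 v\|_{N([0,T])}\to 0$ in a dual Strichartz norm compatible with the gradient, and then iterating local stability across the finitely many subintervals without losing the $\varepsilon$-smallness. Both issues are classical perturbation-of-NLS arguments, but the combination of the nonlocal convolution structure with the $\dot H^1$ framework means one must be careful about the order in which derivatives, convolutions, and Strichartz norms are applied.
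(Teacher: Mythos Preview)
Your proposal is correct and follows essentially the same perturbation/stability scheme as the paper's Appendix~B. Both arguments set $w=v_\varepsilon-v$, use Duhamel and Strichartz, isolate the pure forcing term $(V_\varepsilon*|v|^2)v-|v|^2v$ whose smallness comes from the approximate-identity property of $V_\varepsilon$, and then iterate over finitely many subintervals of $[0,T]$ with a continuity argument.

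Two minor differences are worth noting. First, your decomposition $\mathcal{D}_\varepsilon+\mathcal{E}_\varepsilon$ is slightly cleaner than the paper's: the paper splits at $(V_\varepsilon*|v_\varepsilon|^2)v$ rather than $(V_\varepsilon*|v|^2)v$, so its term~(II) still contains $|v_\varepsilon|^2$ and must be re-expanded into the pure-$v$ piece (i) plus cross terms (ii), (iii); your split isolates the forcing in one step. Second, the iteration mechanism differs: the paper works with the concrete dual Strichartz norm $L^2_\tau L^{6/5}_x$, gains an explicit $T^{1/2}$ factor, and partitions $[0,T]$ into intervals of \emph{small length} $T_1$ so that $CM^2T_1^{1/2}\ll 1$; you instead partition so that the \emph{Strichartz norm of $v$} is small on each piece, in the spirit of general NLS stability theory. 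Both closures are standard; the paper's is slightly more elementary since it only needs the $L^\infty_\tau\dot H^1$ bound on $v$, while yours implicitly uses a global-in-$[0,T]$ Strichartz bound for $v$. Also, in your displayed contraction estimate you should keep the quadratic and cubic terms in $w$ coming from the trilinear expansion of $\mathcal{D}_\varepsilon$; these are of course harmless in the bootstrap but should be written out.
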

Combining Lemma \ref{local theory lemma C} and Proposition \ref{blackbox in Hartree},
we will prove Proposition \ref{blackbox of FM PR}. The main task
is to verify the above conditions in Proposition \ref{blackbox of FM PR}
hold true item by item. 
\begin{proof}
From the standard analysis knowledge and the definition of $E(v_{0}),E^{V_{\varepsilon}}(v_{0})$,
$\forall m_{1}>0,$ $\exists m_{2}>0$, such that $\forall0<\varepsilon<m_{2}$,
\begin{equation}
|E(v_{0})-E^{V_{\varepsilon}}(v_{0})|<m_{1}.\label{the difference of initial energy}
\end{equation}

Since $\tau_{*}$ is a fixed number, condition (\ref{energy condition-hartree-1})
is naturally established. The assumption (\ref{energy condition-hartree-2})
is verified by condition (\ref{energy condition-schrodinger-2}).
Lastly, combining Lemma \ref{local theory lemma C} and (\ref{the difference of initial energy}),
we conclude (\ref{energy condition-hartree-2}) holds. The conclusion
of the Proposition \ref{blackbox in Hartree} then leads directly
to the conclusion (\ref{conclusion of FM PR in blackbox}) as we want.
\end{proof}
\begin{description}
\item [{Acknowledgement}] The author is very grateful to his advisers,
Ping Zhang and Chenjie Fan for their constant encouragements, guidance
and very helpful discussions during preparing this work. The author
thanks Ning Liu and Hequn Zhang, for some helpful comments and discussions. The author is supported by National Key R\&D
Program of China under grant 2021YFA1000800.
\end{description}

\appendix

\section{A virial argument and The applications of local theory}

In this section, we give complete analysis claimed in Remark \ref{rmk 2 of thm 1}
and some applications of local theory. First, through a classical
virial argument, we prove the existence of the blow up solutions,
and state it as follows,
\begin{prop}
Let $v(t,x)\in C([0,T),\dot{H}^{\frac{1}{2}}\cap\dot{H}^{1})$ be
a solution to equation (\ref{eq:Hartree}) where $V(x)$ satisfies
the condition (\ref{connection condition}), if the initial data $v_{0}$
satisfies 
\begin{equation}
E^{V}(v_{0})=\frac{1}{2}\int|\nabla v_{0}|^{2}-\frac{1}{4}\int(V*|v_{0}|^{2})|v_{0}|^{2}<0,\label{the negative energy in hartree}
\end{equation}

then $v(t,x)$ blows up in finite time, i.e. $T<+\infty.$
\end{prop}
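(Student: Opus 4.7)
The plan is to run the classical Glassey-type virial argument, adapted to the Hartree nonlinearity by using the structural hypothesis (\ref{connection condition}) to trade the interaction term for the conserved energy. Set $I(t):=\int|x|^2|v(t,x)|^2\,dx$. The argument is cleanest under the finite-variance assumption $|x|v_0\in L^2$, which I would either adopt (it is standard in Glassey's setup) or bypass at the end via a localized virial.

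The first step is to symmetrize the nonlinear term in the virial identity (\ref{virial identity in Hartree seting}). Since $V$ is radial, $\nabla V$ is odd, and swapping $x\leftrightarrow y$ in the double integral gives
\[
4\int x_j|v|^2\,\partial_j(V*|v|^2)\,dx \;=\; 2\iint (x-y)_j\,\partial_j V(x-y)\,|v(x)|^2|v(y)|^2\,dx\,dy.
\]
Applying the pointwise assumption $\sum z_j\partial_jV(z)\leq -\alpha V(z)$ with $z=x-y$, this is bounded above by $-2\alpha\int(V*|v|^2)|v|^2\,dx$. Using conservation of energy to write $\int(V*|v|^2)|v|^2=2\int|\nabla v|^2-4E^V(v_0)$ and substituting back into (\ref{virial identity in Hartree seting}), I obtain
\[
I''(t) \;\leq\; (8-4\alpha)\int|\nabla v|^2\,dx \;+\; 8\alpha\,E^V(v_0).
\]
Since $\alpha>2$, the gradient coefficient is strictly negative; combined with the hypothesis $E^V(v_0)<0$, this yields $I''(t)\leq 8\alpha E^V(v_0)=:C_0<0$.

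Integrating twice then gives $I(t)\leq I(0)+I'(0)\,t+\frac{1}{2}C_0 t^2$, a concave-down parabola that becomes negative in finite time. Since $I(t)\geq 0$ throughout the lifespan, the solution cannot be extended globally, hence $T<+\infty$.

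The only subtle point — and the main obstacle if one is strict about hypotheses — is that $|x|v_0\in L^2$ is not built into $\dot H^{1/2}\cap \dot H^1$. To bypass it one replaces $|x|^2$ by the cutoff $\psi_R(x)$ from Section 2 and runs the same computation: the symmetrized nonlinear term produces exactly an error of the type $\Phi(\tau)$ treated in Lemma \ref{sobolev lemma 2}, which is absorbed using (\ref{the integrability condition})--(\ref{eq:technisch reason}), while the strictly negative leading contribution $8\alpha E^V(v_0)$ survives; letting $R\to\infty$ along a putative global-in-time sequence then closes the argument. Apart from this bookkeeping, the proof is a line-by-line adaptation of Glassey's classical argument.
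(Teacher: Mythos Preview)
Your argument is correct and follows essentially the same route as the paper: symmetrize the nonlinear term in the virial identity using the radiality of $V$, apply the pointwise bound (\ref{connection condition}) with $z=x-y$, and then use energy conservation together with $\alpha>2$ to obtain $I''(t)\le C_0<0$. The paper packages the last step slightly differently (it bounds the functional $K^V(v)$ directly by $E^V(v_0)$, dropping the excess potential term rather than the excess gradient term), but the two computations are algebraically equivalent. Your additional remark on the finite-variance hypothesis and the localized $\psi_R$ workaround is more careful than the paper, which simply invokes (\ref{virial identity in Hartree seting}) without comment.
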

\begin{proof}
Since $V(x)$ is a radial function, then
\[
2\int x_{j}|v(t,x)|^{2}\partial_{x_{j}}\int V(x-y)|v(t,y)|^{2}dydx=\int\int(x_{j}-y_{j})\partial_{x_{j}}V(x-y)|v(t,x)|^{2}|v(t,y)|^{2}dydx.
\]

So we can rewrite the virial identity (\ref{virial identity in Hartree seting})
as the follows 
\begin{align*}
\frac{d^{2}}{dt^{2}}\int|x|^{2}|v|^{2} & =8\int|\nabla v|^{2}+4\int x_{j}|v|^{2}\partial_{j}(V*|v|^{2})dx\\
 & =8\int|\nabla v|^{2}+2\int\int(x_{j}-y_{j})\partial_{x_{j}}V(x-y)|v(t,x)|^{2}|v(t,y)|^{2}dydx\\
 & =16(\frac{1}{2}\int|\nabla v|^{2}+\frac{1}{8}\int\int(x_{j}-y_{j})\partial_{x_{j}}V(x-y)|v(t,x)|^{2}|v(t,y)|^{2}dydx)\\
 & :=16K^{V}(v(t)).
\end{align*}

Using the condition (\ref{connection condition}), we conclude 
\[
K^{V}(v(t))\leq E^{V}(v(t))=E^{V}(v_{0})<0,
\]
and we finish the proof. 
\end{proof}
\begin{prop}
\label{prop A1}Assume $u(t)$ is a solution in $C([0,T),\dot{H}^{\frac{1}{2}}\cap\dot{H}^{1})$
to equation (\ref{eq:Hartree}) with 
\[
\begin{cases}
u_{0}\in\dot{H}^{\frac{1}{2}}\cap\dot{H}^{1},\\
||u_{0}||_{\dot{H}^{1}}\leq M,
\end{cases}
\]
 and $V(x)\in L^{1}(\mathbb{R}^{3})\cap L^{\frac{3}{2}}(\mathbb{R}^{3}),$
then, if $u(t)$ blows up in finite time $T<\infty$ , there holds
\begin{equation}
\lim_{t\nearrow T}||u(t)||_{\dot{H}^{\frac{1}{2}}}=+\infty.
\end{equation}
\end{prop}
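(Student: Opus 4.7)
The natural approach is a contradiction argument based on local well-posedness of \eqref{eq:Hartree} at the critical Sobolev regularity $\dot H^{1/2}(\mathbb{R}^{3})$. Suppose, towards a contradiction, that $\limsup_{t\nearrow T}\|u(t)\|_{\dot H^{1/2}}=A<+\infty$ and pick $t_{n}\nearrow T$ with $\|u(t_{n})\|_{\dot H^{1/2}}\le 2A$. The plan is to show that for any initial datum in $\dot H^{1/2}\cap\dot H^{1}$ with $\dot H^{1/2}$ norm at most $2A$, the solution to \eqref{eq:Hartree} exists on a time interval of length $\delta=\delta(A,\|V\|_{L^{1}\cap L^{3/2}})>0$ independent of $n$. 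Choosing $n$ large enough that $T-t_{n}<\delta$ and invoking uniqueness then extends $u$ past $T$ in the class $\dot H^{1/2}\cap\dot H^{1}$, contradicting the assumption that $T$ is the blow-up time.

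The crux of the argument is the $\dot H^{1/2}$ local theory, which I would implement through a standard Strichartz contraction mapping. The key nonlinear estimate uses the hypothesis $V\in L^{3/2}$: by Young's convolution inequality followed by the critical Sobolev embedding $\dot H^{1/2}(\mathbb{R}^{3})\hookrightarrow L^{3}(\mathbb{R}^{3})$,
\[
\|V*|u|^{2}\|_{L^{3}_{x}}\le\|V\|_{L^{3/2}}\|u\|_{L^{3}_{x}}^{2}\lesssim\|V\|_{L^{3/2}}\|u\|_{\dot H^{1/2}_{x}}^{2}.
\]
Pairing this with another factor of $u$ via H\"older, the nonlinearity $(V*|u|^{2})u$ can be placed in $L^{q'}_{t}L^{3/2}_{x}$ for an $\dot H^{1/2}$-admissible pair $(q,3)$, with norm controlled by a cubic power of an appropriate $L^{q}_{t}L^{3}_{x}$ Strichartz norm of $u$, times $\|V\|_{L^{3/2}}$ and a positive power of the interval length. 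Combined with the linear Strichartz estimates at the $\dot H^{1/2}$ level, this yields a contraction on $[t_{n},t_{n}+\delta]$ with $\delta$ depending only on $A$ and on $V$.

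To close the loop one needs the local solution to actually live in $\dot H^{1/2}\cap\dot H^{1}$. This I would obtain by a parallel, strictly subcritical, Strichartz contraction at the $\dot H^{1}$ level: the nonlinear bound is provided by $V\in L^{1}$ together with $\dot H^{1}\hookrightarrow L^{6}$, giving $\|V*|u|^{2}\|_{L^{\infty}}\le\|V\|_{L^{1}}\|u\|_{L^{6}}^{2}$ and analogous estimates in Strichartz-compatible norms, so that persistence of $\dot H^{1}$ regularity holds on $[t_{n},t_{n}+\delta]$. Uniqueness in $\dot H^{1/2}\cap\dot H^{1}$ then forces this new solution to agree with $u$ on $[t_{n},T)$ and to continue strictly past $T$, which is the desired contradiction. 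The main technical obstacle I anticipate is the Strichartz bookkeeping at the exact critical regularity: one must choose the admissible pair and the auxiliary space-time norms so that the smallness required for the fixed point is delivered purely by the length of the time interval and by the bound $\|u(t_{n})\|_{\dot H^{1/2}}\le 2A$, with no a priori assumption on any scaling-invariant space-time norm of $u$ itself.
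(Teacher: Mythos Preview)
Your approach takes a different route from the paper, and the obstacle you flag at the end is not bookkeeping but a genuine gap. With the nonlinear estimate you write down, $\|V*|u|^{2}\|_{L^{3}_{x}}\le\|V\|_{L^{3/2}}\|u\|_{L^{3}_{x}}^{2}$, the Hartree nonlinearity obeys exactly the same scaling as the cubic power $|u|^{2}u$ in $\mathbb{R}^{3}$, for which $\dot H^{1/2}$ is the \emph{critical} space. A Strichartz contraction built on that estimate therefore closes only on an interval whose length depends on the profile of the data (through smallness of a scaling-invariant space--time norm of $e^{i\tau\Delta}u(t_{n})$), not on $\|u(t_{n})\|_{\dot H^{1/2}}$ alone; shrinking the time interval buys nothing, since every term in the contraction is scale-invariant. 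Your persistence-of-$\dot H^{1}$ paragraph cannot repair this either: the $\dot H^{1}$ lifespan is governed by $\|u(t_{n})\|_{\dot H^{1}}$, and that quantity is blowing up. (A minor additional point: the correct contradiction hypothesis is $\liminf_{t\nearrow T}\|u(t)\|_{\dot H^{1/2}}<\infty$, not $\limsup$.)

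The paper avoids $\dot H^{1/2}$-level local theory entirely and instead uses energy conservation. Your own potential-term estimate gives
\[
\int (V*|u|^{2})|u|^{2}\,dx \;\lesssim\; \|V\|_{L^{3/2}}\,\|u\|_{L^{3}}^{4}\;\lesssim\;\|u\|_{\dot H^{1/2}}^{4},
\]
so conservation of $E^{V}$ yields
\[
E^{V}(u_{0})\;\ge\;\tfrac12\|u(t)\|_{\dot H^{1}}^{2}-C\|u(t)\|_{\dot H^{1/2}}^{4}.
\]
Since the \emph{subcritical} $\dot H^{1}$ local theory (via $V\in L^{1}$, $\dot H^{1}\hookrightarrow L^{6}$) forces $\|u(t)\|_{\dot H^{1}}\to+\infty$ as a full limit, the inequality above immediately gives $\|u(t)\|_{\dot H^{1/2}}\to+\infty$. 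This two-line argument is what you should replace your Step~1 with.
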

\begin{proof}
step 1. The energy is well-defined.

From the energy conservation law, 
\[
E^{V}(u(t))=E^{V}(u_{0})=\frac{1}{2}\int|\nabla u(t,x)|^{2}dx-\frac{1}{4}\int(V*|u|^{2})(t,x)|u(t,x)|^{2}dx.
\]

Also, with the aid of Sobolev embedding theorem in $\mathbb{R}^{3}$,
$\dot{H}^{\frac{1}{2}}\hookrightarrow L^{3}$ and $\dot{H}^{1}\hookrightarrow L^{6}$,
\begin{equation}
\int(V*|u|^{2})(t,x)|u(t,x)|^{2}dx\lesssim||(V*|u|^{2})||_{L_{x}^{3}}||u||_{L_{x}^{3}}^{2}\lesssim||V||_{L_{x}^{\frac{3}{2}}}||u||_{L_{x}^{3}}^{4}\lesssim||u||_{\dot{H}^{\frac{1}{2}}}^{4}.
\end{equation}

So $E^{V}(u(t))$ is well defined.

step 2 . Proof by contradiction.

From step 1,
\begin{equation}
E^{V}(u(t))\geq\frac{1}{2}\int|\nabla u(t,x)|^{2}dx-C||u||_{\dot{H}^{\frac{1}{2}}}^{4}.
\end{equation}

If $u(t)$ blows up at $T$, then 
\[
\lim_{t\nearrow T}||u(t)||_{\dot{H}^{1}}=+\infty.
\]

(Otherwise we can continues the solution to $[t,t+C(M)]$ with $t+C(M)>T$,
which is a contradiction.)

The fact $E^{V}(u(t))=E^{V}(u_{0})<+\infty$ shows that 
\[
\lim_{t\nearrow T}||u(t)||_{\dot{H}^{\frac{1}{2}}}=+\infty.
\]
\end{proof}
\begin{prop}
\label{prop A2}Assume the assumptions in Proposition \ref{prop A1}
hold, and $u_{0}\in H^{1}(\mathbb{R}^{3})$ , $V(x)\in L^{1}(\mathbb{R}^{3})\cap L^{\infty}(\mathbb{R}^{3})$.
Then $u(t)$ can not blow up in finite time $T<\infty$. 
\end{prop}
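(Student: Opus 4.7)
The plan is to exploit the extra integrability $V\in L^\infty$ together with the extra regularity $u_0\in H^1$ (equivalently $u_0\in L^2\cap\dot H^1$) to promote mass conservation into a uniform bound on the potential energy. More precisely, since $u_0\in L^2$, the mass conservation law \eqref{eq:mass} gives $\|u(t)\|_{L^2}=\|u_0\|_{L^2}$ for all $t\in[0,T)$, and by Young's convolution inequality
\[
\int (V*|u|^2)|u|^2\,dx\leq \|V*|u|^2\|_{L^\infty}\,\||u|^2\|_{L^1}\leq \|V\|_{L^\infty}\|u_0\|_{L^2}^4.
\]
In other words, the nonlocal nonlinearity is automatically subcritical in the $L^2$-controlled regime that $V\in L^\infty$ provides.

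The next step is to feed this into energy conservation. From $E^V(u(t))=E^V(u_0)$ we obtain
\[
\tfrac{1}{2}\|\nabla u(t)\|_{L^2}^2=E^V(u_0)+\tfrac{1}{4}\int (V*|u|^2)|u|^2\,dx\leq E^V(u_0)+\tfrac{1}{4}\|V\|_{L^\infty}\|u_0\|_{L^2}^4,
\]
so $\|\nabla u(t)\|_{L^2}$ is bounded uniformly in $t\in[0,T)$ by a constant depending only on $u_0$ and $V$. Combined with mass conservation, this yields a uniform $H^1$ bound for $u(t)$, and by the standard interpolation $\|u(t)\|_{\dot H^{1/2}}\leq \|u(t)\|_{L^2}^{1/2}\|\nabla u(t)\|_{L^2}^{1/2}$ we also get a uniform bound on $\|u(t)\|_{\dot H^{1/2}}$.

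Finally, to rule out finite time blow-up, I invoke Proposition \ref{prop A1} in its contrapositive form: if $u(t)$ blew up at some finite time $T<\infty$, then $\|u(t)\|_{\dot H^{1/2}}\to\infty$ as $t\nearrow T$, which contradicts the uniform bound just obtained. (Note that the hypotheses of Proposition \ref{prop A1} are met since $V\in L^1\cap L^\infty\subset L^1\cap L^{3/2}$ by interpolation, so the energy is well-defined in $\dot H^{1/2}\cap\dot H^1$.) Hence $T=+\infty$. No serious obstacle is expected here: the whole argument is driven by the fact that $V\in L^\infty$ trivializes the convolution nonlinearity in the $L^2$-controlled setting, and the only thing to be careful about is checking that the interpolation and Young-type bounds have constants independent of $t$, which is straightforward from the two conservation laws.
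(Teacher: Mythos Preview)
Your proof is correct and follows essentially the same approach as the paper: bound the potential energy via $\|V\|_{L^\infty}$ and mass conservation, then use energy conservation to get a uniform bound on $\|\nabla u(t)\|_{L^2}$. The only cosmetic difference is in the last step: the paper contradicts the $\dot H^1$ blow-up criterion directly (finite-time blow-up forces $\|\nabla u(t)\|_{L^2}\to\infty$), whereas you take a short detour through the $\dot H^{1/2}$ norm and Proposition~\ref{prop A1}; both routes are equivalent and the extra interpolation step is not needed.
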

\begin{proof}
Recall
\begin{equation}
E^{V}(u(t))=E^{V}(u_{0})=\frac{1}{2}\int|\nabla u(t,x)|^{2}dx-\frac{1}{4}\int(V*|u|^{2})(t,x)|u(t,x)|^{2}dx<+\infty
\end{equation}

and 
\begin{align*}
\int(V*|u|^{2})(t,x)|u(t,x)|^{2}dx & \lesssim||(V*|u|^{2})||_{L_{x}^{\infty}}|||u|^{2}||_{L_{x}^{1}}\\
 & \lesssim||V||_{L_{x}^{\infty}}||u||_{L_{x}^{2}}^{4}\\
 & \lesssim||u_{0}||_{L_{x}^{2}}^{4}\\
 & \lesssim C(u_{0}),
\end{align*}

where we also use mass conservation.

The above gives 
\begin{equation}
\int|\nabla u(t,x)|^{2}dx\leq\frac{1}{2}\int(V*|u|^{2})(t,x)|u(t,x)|^{2}dx+2E^{V}(u_{0})\leq C(u_{0}),
\end{equation}

which is a contradiction to $\lim_{t\nearrow T}||u(t)||_{\dot{H}^{1}}=+\infty.$
\end{proof}
In fact, we also have a stronger characterization about the blow-up
rate of critical norm, and state it as follows:
\begin{prop}
\label{prop  A3}Under the assumptions in proposition \ref{prop A1}
and assume $u(t)$ blows up in finite time $T<+\infty$, then $\exists C=C(u_{0})\;s.t.$
\begin{equation}
||u(t)||_{\dot{H}^{1}}\geq\frac{C}{(T-t)^{\frac{1}{4}}},
\end{equation}

and 
\begin{equation}
||u(t)||_{\dot{H}^{\frac{1}{2}}}\geq\frac{C}{(T-t)^{\frac{1}{8}}}
\end{equation}

for $t$ close enough to $T$.
\end{prop}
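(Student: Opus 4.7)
Both bounds will follow from a single Strichartz--based bootstrap that yields the $\dot H^{1}$ rate; the $\dot H^{1/2}$ rate then falls out of the energy conservation law.

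For the $\dot H^{1}$ rate the plan is to show that from any $t_{0}<T$ the solution can be continued on an interval $[t_{0},t_{0}+\tau]$ of length $\tau\geq c\,\|\nabla u(t_{0})\|_{L^{2}}^{-4}$, where $c>0$ depends only on $\|V\|_{L^{1}}$. Using Duhamel on $[t_{0},t_{0}+\tau]$ together with the endpoint $(q,r)=(2,6)$ Strichartz estimate in $\mathbb{R}^{3}$, I would set
\[
A(\tau):=\|\nabla u\|_{L_{s}^{\infty}L_{x}^{2}([t_{0},t_{0}+\tau])}+\|\nabla u\|_{L_{s}^{2}L_{x}^{6}([t_{0},t_{0}+\tau])},
\]
so that $A(\tau)\leq C\,\|\nabla u(t_{0})\|_{L^{2}}+C\,\|\nabla F(u)\|_{L_{s}^{1}L_{x}^{2}([t_{0},t_{0}+\tau])}$ with $F(u)=(V*|u|^{2})u$. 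Writing $\nabla F=(V*|u|^{2})\nabla u+(V*\nabla|u|^{2})u$, the key nonlinear estimate comes from Young $\|V*g\|_{L^{3}}\leq\|V\|_{L^{1}}\|g\|_{L^{3}}$, H\"older in $x$, and the Sobolev embedding $\dot H^{1}\hookrightarrow L^{6}$: pointwise in time,
\[
\|\nabla F(u)\|_{L_{x}^{2}}\leq C\,\|V\|_{L^{1}}\,\|\nabla u\|_{L^{2}}^{2}\,\|\nabla u\|_{L^{6}}.
\]
Integrating in $s$ and applying Cauchy--Schwarz then extracts the crucial $\tau^{1/2}$ factor,
\[
\|\nabla F(u)\|_{L_{s}^{1}L_{x}^{2}}\leq C\,\|V\|_{L^{1}}\,\tau^{1/2}\,A(\tau)^{3}.
\]
With $M:=\|\nabla u(t_{0})\|_{L^{2}}$ the combined bound becomes $A(\tau)\leq CM+C\,\|V\|_{L^{1}}\,\tau^{1/2}\,A(\tau)^{3}$, and a standard continuity argument closes the bootstrap $A(\tau)\leq 2CM$ for all $\tau\leq c/(\|V\|_{L^{1}}^{2}M^{4})$. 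Since $u$ blows up at $T$, the maximality of $T$ forces $T-t_{0}\geq c/(\|V\|_{L^{1}}^{2}M^{4})$, i.e.
\[
\|\nabla u(t_{0})\|_{L^{2}}\geq\frac{C(V,u_{0})}{(T-t_{0})^{1/4}}.
\]

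For the $\dot H^{1/2}$ rate I would invoke the energy conservation law. With Young $\|V*|u|^{2}\|_{L^{3}}\leq\|V\|_{L^{3/2}}\|u\|_{L^{3}}^{2}$ (using $V\in L^{3/2}$, as in Proposition \ref{prop A1}) and the Sobolev embedding $\dot H^{1/2}(\mathbb{R}^{3})\hookrightarrow L^{3}$,
\[
\tfrac12\|\nabla u(t)\|_{L^{2}}^{2}=E^{V}(u_{0})+\tfrac14\!\int(V*|u|^{2})(t)|u(t)|^{2}dx\leq E^{V}(u_{0})+C\,\|V\|_{L^{3/2}}\,\|u(t)\|_{\dot H^{1/2}}^{4}.
\]
Because $\|\nabla u(t)\|_{L^{2}}\to+\infty$ while $E^{V}(u_{0})$ is a fixed finite number, for $t$ close enough to $T$ the first term on the right is dominated, giving $\|u(t)\|_{\dot H^{1/2}}^{4}\gtrsim\|\nabla u(t)\|_{L^{2}}^{2}\gtrsim(T-t)^{-1/2}$ and hence $\|u(t)\|_{\dot H^{1/2}}\gtrsim(T-t)^{-1/8}$.

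The hard part will be the Strichartz bootstrap in the first step. What makes it go through is the observation that pulling out the potential by $\|V*g\|_{L^{3}}\leq\|V\|_{L^{1}}\|g\|_{L^{3}}$ \emph{before} invoking Sobolev keeps the estimate genuinely trilinear in $\nabla u$ and avoids any non--scale-invariant quantity (mass, $\dot H^{1/2}$ norm); this is exactly what lets the $\tau^{1/2}$ factor balance the cubic power of $A(\tau)$ and close the bootstrap at the sharp exponent $\tau\sim M^{-4}$. The use of the endpoint Strichartz pair $(2,6)$ is merely for convenience: any admissible pair $(q,r)$ with $q>2$ would work at the cost of slightly more careful bookkeeping in the time integration.
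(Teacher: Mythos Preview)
Your proposal is correct and follows essentially the same approach as the paper: the $\dot H^{1}$ rate comes from the local-existence-time bound $T_{\mathrm{loc}}\gtrsim\|\nabla u(t_{0})\|_{L^{2}}^{-4}$ (which the paper simply invokes as a one-line black box ``from local theory'' and packages as a contradiction argument, whereas you spell out the Strichartz bootstrap explicitly and argue directly), and the $\dot H^{1/2}$ rate is then read off from energy conservation and the $V\in L^{3/2}$ hypothesis exactly as you do.
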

\begin{proof}
If not, $\exists\{t_{n}\}$ with $\lim_{n\nearrow+\infty}t_{n}=T$
such that $||u(t_{n})||_{\dot{H}^{1}}\leq\frac{1}{n(T-t)^{\frac{1}{4}}}:=M_{n}.$

At the time $t_{n}$, 
\begin{equation}
\begin{cases}
i\partial_{t}u+\triangle u=-(V*|u|^{2})u. & (t,x)\in\mathbb{R}\times\mathbb{R}^{3},\\
u\mid_{t=t_{n}}=u(t_{n},x)\in\dot{H}^{1}\cap\dot{H}^{\frac{1}{2}}.
\end{cases}\label{t tends to T in Hartree}
\end{equation}

From local theory, $u(t)\in C([t_{n},t_{n}+T_{n}],\dot{H}^{\frac{1}{2}}\cap\dot{H}^{1})$
with $T_{n}\geq\frac{C}{M_{n}^{4}}\gtrsim n^{4}(T-t_{n}).$

However, $t_{n}+n^{4}(T-t_{n})>T$ for $n$ large enough, which is
a contradiction.

From the energy conservation law, 
\begin{align*}
E^{V}(u(t)) & =E^{V}(u_{0})=\frac{1}{2}\int|\nabla u(t,x)|^{2}dx-\frac{1}{4}\int(V*|u|^{2})(t,x)|u(t,x)|^{2}dx\\
 & \geq\frac{1}{2}||\nabla u(t)||_{L_{x}^{2}}^{2}-C||u(t)||_{\dot{H}^{\frac{1}{2}}}^{4}.
\end{align*}

From the (\ref{t tends to T in Hartree}) , $||u(t)||_{\dot{H}^{1}}\geq\frac{C}{(T-t)^{\frac{1}{4}}}$
, so $||u(t)||_{\dot{H}^{\frac{1}{2}}}\geq\frac{C}{(T-t)^{\frac{1}{8}}}$.
\end{proof}
\begin{rem}
In proposition \ref{prop  A3}, we have used the fact $V(x)\in L^{\frac{3}{2}}(\mathbb{R}^{3}).$
\end{rem}
\begin{rem}
Although we can give some characterizations about the blow-up rate,
we can not give a concrete example of the blow-up solution. This is
a weakness for proposition \ref{prop A1}-\ref{prop  A3}. 
\end{rem}

\section{The proof of Lemma \ref{local theory lemma C}}
\begin{proof}
We define $u_{\varepsilon}:=v_{\varepsilon}-v$ , i.e. $v_{\varepsilon}(\tau,x)=u_{\varepsilon}(\tau,x)+v(\tau,x),$
then $u_{\varepsilon}(\tau,x)$ satisfies
\begin{equation}
\begin{cases}
i\partial_{t}u_{\varepsilon}(\tau,x)+\triangle u_{\varepsilon}(\tau,x)=-(V_{\varepsilon}*|v_{\varepsilon}(\tau,x)|^{2})v_{\varepsilon}(\tau,x)+|v|^{2}v. & (\tau,x)\in\mathbb{R}\times\mathbb{R}^{3},\\
u_{\varepsilon}(\tau,x)\mid_{\tau=0}=0.
\end{cases}
\end{equation}

From the integral equation, 
\begin{equation}
u_{\varepsilon}(\tau,x)=i\int_{0}^{\tau}e^{i(\tau-s)\triangle}[(V_{\varepsilon}*|v_{\varepsilon}|^{2})v_{\varepsilon}-|v|^{2}v](s)ds,
\end{equation}

which implies 
\begin{align*}
||u_{\varepsilon}(\tau,x)||_{L_{\tau}^{\infty}([0,T],\dot{H}^{1})} & =||\int_{0}^{\tau}e^{i(\tau-s)\triangle}[(V_{\varepsilon}*|v_{\varepsilon}|^{2})v_{\varepsilon}-|v|^{2}v](s)ds||_{L_{\tau}^{\infty}([0,T],\dot{H}^{1})}\\
 & \leq||\int_{0}^{\tau}e^{i(\tau-s)\triangle}\nabla[(V_{\varepsilon}*|v_{\varepsilon}|^{2})v_{\varepsilon}-(V_{\varepsilon}*|v_{\varepsilon}|^{2})v](s)ds||_{L_{\tau}^{\infty}([0,T],L^{2})}\\
 & +||\int_{0}^{\tau}e^{i(\tau-s)\triangle}\nabla([(V_{\varepsilon}*|v_{\varepsilon}|^{2})-|v|^{2}]v(s))ds||_{L_{\tau}^{\infty}([0,T],L^{2})}\\
 & :=(I)+(II).
\end{align*}

Next we will estimate these two terms.

\begin{align*}
(I) & \lesssim||\nabla[(V_{\varepsilon}*|v_{\varepsilon}|^{2})v_{\varepsilon}-(V_{\varepsilon}*|v_{\varepsilon}|^{2})v]||_{L_{\tau}^{2}([0,T],L_{x}^{\frac{6}{5}})}\\
 & \leq||V_{\varepsilon}*(\nabla|v_{\varepsilon}|^{2})(v_{\varepsilon}(s)-v(s))||_{L_{\tau}^{2}([0,T],L_{x}^{\frac{6}{5}})}+||V_{\varepsilon}*(|v_{\varepsilon}|^{2})\nabla(v_{\varepsilon}(s)-v(s))||_{L_{\tau}^{2}([0,T],L_{x}^{\frac{6}{5}})}\\
 & \leq||V_{\varepsilon}*(\nabla|v_{\varepsilon}|^{2})||_{L_{\tau}^{\infty}([0,T],L_{x}^{\frac{3}{2}})}||(v_{\varepsilon}(s)-v(s))||_{L_{\tau}^{\infty}([0,T],L_{x}^{6})}\cdotp T^{\frac{1}{2}}\\
 & +||V_{\varepsilon}*(|v_{\varepsilon}|^{2})||_{L_{\tau}^{\infty}([0,T],L_{x}^{3})}||\nabla(v_{\varepsilon}(s)-v(s))||_{L_{\tau}^{\infty}([0,T],L_{x}^{2})}\cdotp T^{\frac{1}{2}}\\
 & \leq||\nabla v_{\varepsilon}||_{L_{\tau}^{\infty}([0,T],L_{x}^{2})}^{2}||u_{\varepsilon}(\tau,x)||_{L_{\tau}^{\infty}([0,T],\dot{H}^{1})}T^{\frac{1}{2}}\\
 & \leq(||u_{\varepsilon}(\tau,x)||_{L_{\tau}^{\infty}([0,T],\dot{H}^{1})}^{2}+||v(\tau,x)||_{L_{\tau}^{\infty}([0,T],\dot{H}^{1})}^{2})||u_{\varepsilon}(\tau,x)||_{L_{\tau}^{\infty}([0,T],\dot{H}^{1})}\cdotp T^{\frac{1}{2}}.
\end{align*}

Before we estimate part $(II)$, let us recall some elementary knowledge,

\begin{equation}
\forall f\in L^{p}(\mathbb{R}^{3}),\;p\in(1,+\infty),\;\lim_{\varepsilon\searrow0^{+}}||V_{\varepsilon}*f-f||_{L^{p}(\mathbb{R}^{3})}=0.
\end{equation}

From the local theory, $v(\tau,x)\in C([0,T],\dot{H}^{1})$, so $\forall\tilde{\delta}>0,\;\exists\delta_{1}>0,$
s.t. $\forall\tau_{1},\tau_{2}\in[0,T]$ with $|\tau_{2}-\tau_{1}|<\delta_{1}$
, $||v(\tau_{1},x)-v(\tau_{2},x)||_{\dot{H}^{1}}<\tilde{\delta}$
. Similarly, we have $||v(\tau_{1},x)-v(\tau_{2},x)||_{L^{6}}<\tilde{\delta}$
and $||v(\tau_{1},x)\cdotp\nabla v(\tau_{1},x)-v(\tau_{2},x)\cdotp\nabla v(\tau_{2},x)||_{L^{\frac{3}{2}}}<\tilde{\delta}$. 

Now we estimate the part $(II)$:
\begin{align*}
(II) & =||\int_{0}^{\tau}e^{i(\tau-s)\triangle}\nabla([(V_{\varepsilon}*|v_{\varepsilon}|^{2})-|v|^{2}]v(s))ds||_{L_{\tau}^{\infty}([0,T],L^{2})}\\
 & \leq||\nabla([(V_{\varepsilon}*|v|^{2})-|v|^{2}]v(s))||_{L_{\tau}^{2}([0,T],L_{x}^{\frac{6}{5}})}\\ &+||\nabla[(V_{\varepsilon}*(\bar{v}u_{\varepsilon}+v\bar{u}_{\varepsilon}))v(s)]||_{L_{\tau}^{2}([0,T],L_{x}^{\frac{6}{5}})}\\ &+||\nabla[(V_{\varepsilon}*|u_{\varepsilon}|^{2})v(s)]||_{L_{\tau}^{2}([0,T],L_{x}^{\frac{6}{5}})}\\
 & :=(i)+(ii)+(iii).
\end{align*}

For the term $(ii)$, 
\begin{equation}
(ii)\leq||u_{\varepsilon}||_{L_{\tau}^{\infty}([0,T],\dot{H}^{1})}||v||_{L_{\tau}^{\infty}([0,T],\dot{H}^{1})}^{2}T^{\frac{1}{2}},
\end{equation}

and the computation is also direct for $(iii)$, 
\begin{equation}
(iii)\leq||u_{\varepsilon}||_{L_{\tau}^{\infty}([0,T],\dot{H}^{1})}^{2}||v||_{L_{\tau}^{\infty}([0,T],\dot{H}^{1})}T^{\frac{1}{2}}.
\end{equation}

Finally, we estimate term $(i)$:

\begin{align*}
(i) & \leq||[(V_{\varepsilon}*|v|^{2})-|v|^{2}]\nabla v(s)||_{L_{\tau}^{2}([0,T],L_{x}^{\frac{6}{5}})}+||[(V_{\varepsilon}*\nabla|v|^{2})-\nabla|v|^{2}]v(s))||_{L_{\tau}^{2}([0,T],L_{x}^{\frac{6}{5}})}\\
 & \leq||[(V_{\varepsilon}*|v|^{2})-|v|^{2}]||_{L_{\tau}^{\infty}([0,T],L_{x}^{3})}||\nabla v(s)||_{L_{\tau}^{\infty}([0,T],L_{x}^{2})}T^{\frac{1}{2}}\\ &+||[(V_{\varepsilon}*\nabla|v|^{2})-\nabla|v|^{2}]||_{L_{\tau}^{\infty}([0,T],L_{x}^{\frac{3}{2}})}||v(s)||_{L_{\tau}^{\infty}([0,T],L_{x}^{6})}T^{\frac{1}{2}}\\
 & \leq\delta_{2}||\nabla v(s)||_{L_{\tau}^{\infty}([0,T],L_{x}^{2})}T^{\frac{1}{2}},
\end{align*}

where we used the prepared knowledge in the last inequality and $\delta_{2}$
is a small enough constant to be defined later. Summing up the above
three estimations, we conclude,

$\forall$ fixed $T\in[0,T_{max}),$ $\forall\delta_{2}>0$, $\exists\varepsilon_{1}>0$,
such that $\forall\varepsilon\in(0,\varepsilon_{1}),$ 
\begin{align}
||u_{\varepsilon}||_{L_{\tau}^{\infty}([0,T],\dot{H}^{1})} & \leq C\{||u_{\varepsilon}||_{L_{\tau}^{\infty}([0,T],\dot{H}^{1})}^{3}T^{\frac{1}{2}}+||u_{\varepsilon}||_{L_{\tau}^{\infty}([0,T],\dot{H}^{1})}^{2}||v||_{L_{\tau}^{\infty}([0,T],\dot{H}^{1})}T^{\frac{1}{2}}\label{C-6}\\
 & +||u_{\varepsilon}||_{L_{\tau}^{\infty}([0,T],\dot{H}^{1})}||v||_{L_{\tau}^{\infty}([0,T],\dot{H}^{1})}^{2}T^{\frac{1}{2}}+\delta_{2}||v||_{L_{\tau}^{\infty}([0,T],\dot{H}^{1})}^{2}T^{\frac{1}{2}}\}.\nonumber 
\end{align}

Dual to the fact $v(t)\in C([0,T],\dot{H}^{1})$, there exists $M=M(T)$
such that $||\nabla v(\tau)||_{L_{\tau}^{\infty}([0,T],L_{x}^{2})}<M$.
We can also choose $\delta_{2}$ small enough such that $\delta_{3}$
small and $\delta_{3}:=\delta_{2}M^{2}.$ Then (\ref{C-6}) can be
rewrote as 
\begin{equation}
||u_{\varepsilon}||_{L_{\tau}^{\infty}([0,T],\dot{H}^{1})}\leq C\{||u_{\varepsilon}||_{L_{\tau}^{\infty}([0,T],\dot{H}^{1})}^{3}T^{\frac{1}{2}}+||u_{\varepsilon}||_{L_{\tau}^{\infty}([0,T],\dot{H}^{1})}^{2}MT^{\frac{1}{2}}+||u_{\varepsilon}||_{L_{\tau}^{\infty}([0,T],\dot{H}^{1})}M^{2}T^{\frac{1}{2}}+\delta_{3}T^{\frac{1}{2}}\}.\label{C-7}
\end{equation}

We can divide $T$ into $N(T)$ intervals and the length of each interval
is $T_{1}$ such that $CT_{1}^{\frac{1}{2}}\ll1,\;CMT_{1}^{\frac{1}{2}}\ll\frac{1}{100}\;and\;CM^{2}T_{1}^{\frac{1}{2}}\ll\frac{1}{100}.$

From each interval, (\ref{C-7}) leads to 
\begin{equation}
||u_{\varepsilon}||_{L_{\tau}^{\infty}([\tau_{i},\tau_{i+1}],\dot{H}^{1})}\leq||u_{\varepsilon}(\tau_{i})||_{\dot{H}^{1}}+\frac{1}{100}||u_{\varepsilon}||_{L_{\tau}^{\infty}([\tau_{i},\tau_{i+1}],\dot{H}^{1})}^{3}+\frac{1}{100}||u_{\varepsilon}||_{L_{\tau}^{\infty}([\tau_{i},\tau_{i+1}],\dot{H}^{1})}^{2}+\delta_{3}.
\end{equation}

On the one hand, we choose $\delta_{3}$ such that $2^{N(T)}\delta_{3}\ll1$.
On the other hand, with the aid of standard continuity argument,
we adjust $\delta_{3}$ such that 
\begin{equation}
||u_{\varepsilon}||_{L_{\tau}^{\infty}([0,T],\dot{H}^{1})}\leq2^{N(T)}\delta_{3}\leq\delta,
\end{equation}

which ends the proof.
\end{proof}

\end{document}